\newtheorem{lemma}{Lemma}
\newtheorem{theorem}{Theorem}
\newtheorem{remark}{Remark}
\newcommand{\Fr}{\textrm{Fr}} 
\newcommand{\Ma}{\textrm{M}\alpha} 
\newcolumntype{P}[1]{>{\centering\arraybackslash}m{#1}}
\title{Primitive variable regularization to derive novel \\Hyperbolic Shallow Water Moment Equations}
\author{
Julian Koellermeier\footnote{Corresponding author, email address {j.koellermeier@rug.nl}} \footnote{Bernoulli Institute, University of Groningen},
\footnote{Department of Mathematics, Computer Science and Statistics, Ghent University}
}
\begin{document}
\maketitle
\begin{abstract}
\noindent
    Shallow Water Moment Equations are reduced-order models for free-surface flows that employ a vertical velocity expansion and derive additional so-called moment equations for the expansion coefficients. Among desirable analytical properties for such systems of equations are hyperbolicity, accuracy, correct momentum equation, and interpretable steady states.
    In this paper, we show analytically that existing models fail at different of these properties and we derive new models overcoming the disadvantages. This is made possible by performing a hyperbolic regularization not in the convective variables (as done in the existing models) but in the primitive variables. Via analytical transformations between the convective and primitive system, we can prove hyperbolicity and compute analytical steady states of the new models. Simulating a dam-break test case, we demonstrate the accuracy of the new models and show that it is essential for accuracy to preserve the momentum equation.
\end{abstract}

\noindent
{\bf Key words:} Free-surface flows; Shallow Water Moment Equations; hyperbolicity; steady states; regularization.


\section{Introduction}
\label{sec:intro}
Free-surface flows are relevant in applications such as tsunami forecasting, river estuary modeling, and flooding risk assessment \cite{Bi2015,Macias2020,Xu2024}. Fast models are essential for acute hazard prediction, while long-term risk assessment requires accuracy. Model reduction serves as a tool to combine these needs by producing models that enable real-time predictions without compromising precision. Although the incompressible Navier–Stokes equations capture the physics of free-surface flows, they are often too computationally expensive for practical use. The Shallow Water Equations (SWE) provide a reduced model obtained by depth-averaging the incompressible Navier–Stokes equations, assuming a constant velocity profile in the vertical direction. However, in many scenarios the assumption of a constant velocity profile is insufficient and may lead to incorrect predictions of bottom friction and other physical effects such as vegetation influence, sediment transport, or bed movement \cite{Garres-Diaz2021a,Huai2021}.

Recently, more refined so-called shallow water moment models have been developed starting with \cite{Kowalski2019}. These models are based on two main steps: (1) Firstly, the incompressible Navier–Stokes equations are reformulated using a vertically rescaled variable $\zeta = (z - h_b)/h$, where $h$ is the water depth from the bottom $h_b$ to the surface, mapping the vertical domain to the unit interval $[0,1]$, also sometimes referred to as $\sigma$-coordinates. This yields a vertically resolved reference system, as detailed in \cite{Kowalski2019}. (2) Secondly, the velocity is expanded in an orthogonal Legendre basis,
\begin{equation}
    u(t, x, \zeta) = u_m(t, x) + \sum_{i=1}^{N} \alpha_i(t, x) \phi_i(\zeta),
\end{equation}
where $u_m$ is the depth-averaged velocity, $\phi_i$ are scaled Legendre polynomials with $\phi_i(0) = 1$, and $\alpha_i$ for $i=1, \ldots, N$ are the expansion coefficients or moments with maximum order $N$. Evolution equations for the moments $\alpha_i$ are derived by projecting the reference system onto the basis functions, resulting in the Shallow Water Moment Equations (SWME) \cite{Kowalski2019}, a closed set of $N+2$ PDEs for the variables $(h, u_m, \alpha_1, \ldots, \alpha_N)$.

Although the SWME from \cite{Kowalski2019} produce accurate results in certain academic test cases, they exhibit two key disadvantages: (1) they are not globally hyperbolic and (2) lack analytical steady states derivable from Rankine–Hugoniot conditions. The lack of hyperbolicity results in instabilities in numerical simulations \cite{Koellermeier2020c} and the lack of analytical steady states impedes the development of well-balancing numerical methods \cite{Cao2022}.

The Hyperbolic Shallow Water Moment Equations (HSWME) addressed the first issue by proving hyperbolicity via a regularization technique known as linearization around linear velocity profiles \cite{Koellermeier2020c}. The Shallow Water Linear Moment Equations (SWLME) \cite{Koellermeier2020i} resolved the second disadvantage by introducing a model that admits analytical steady states, under the assumption of small higher-order moments, though with some loss in accuracy and equilibrium stability \cite{Huang2022}.

These existing approaches mitigate individual shortcomings, but a model satisfying all desirable properties simultaneously remains unavailable. Ideally, a shallow water moment model should (1) preserve mass and momentum balance, (2) be globally hyperbolic, (3) admit analytical steady states, and (4) maintain accuracy with respect to the unregularized model.

In this paper, we present two new hierarchies of hyperbolic SWME that meet all these criteria. We first show that simply combining existing regularization strategies from \cite{Koellermeier2020c} and \cite{Koellermeier2020i} is insufficient. We then introduce a new regularization based on the equations in primitive variables and apply two different regularization techniques in this transformed setting, following ideas from hyperbolic moment models in kinetic theory \cite{Koellermeier2017d}. The resulting models are derived for arbitrary order, both in primitive and convective variables. We prove their hyperbolicity, derive analytical steady states, and evaluate their numerical accuracy (as deviation from the original SWME) using a standard dam-break test case.

The remainder of this paper is organized as follows:
Section \ref{sec:SWME} introduces the necessary notation, before deriving the analytical form of the SWME system for arbitrary order in convective and primitive variables and discussing the shortcomings of existing hyperbolic regularizations of the SWME.
Section \ref{sec:MHSWME} shows that a simple combination of existing regularization strategies, resulting in a new so-called MHSWME system, is not effective at restoring both hyperbolicity and analytical steady states.
Section \ref{sec:new_hyp} employs the new primitive variable regularization, resulting in the new so-called  PHSWME system, for which we analytically derive the equations, prove hyperbolicity and derive the steady states.
Section \ref{sec:new_new_hyp} combines the new primitive variable regularization with keeping the momentum balance equation unchanged, resulting in a new so-called PMHSWME system, for which we successfully perform the same analysis.
In section \ref{sec:numerics}, a simulation test case shows the increased accuracy of our new models and confirms that the unchanged momentum equation in the primitive regularization is crucial. The paper ends with a short conclusion and recommendations for future work.

\section{Existing Shallow Water Moment Equations (SWME)}
\label{sec:SWME}
Throughout this paper, we will derive and analyze PDE systems in the form
\begin{equation}\label{eq:p}
    \frac{\partial U_{p}}{\partial t} + A_{p} \frac{\partial U_{p}}{\partial x} = 0
\end{equation}
for the so-called primitive variables $U_{p}$ defined by
\begin{equation}\label{eq:p_vars}
    U_{p} =(h, u_m, \alpha_1, \ldots, \alpha_N) \in \mathbb{R}^{N+2}.
\end{equation}
Note that for the derivation and analysis we assume a zero right hand side since we only consider the transport part of the system \eqref{eq:p}, which is fully characterized by the system matrix $A_{p}=A_{p}(U_{p}) \in \mathbb{R}^{(N+2)\times(N+2)}$, depending nonlinearly on $U_{p}$, but we drop the dependency in the notation for brevity.

The PDE systems \eqref{eq:p} can be rewritten in the so-called convective variables $U_{c}$ defined by
\begin{equation}\label{eq:c_vars}
    U_{c} =(h, h u_m, h\alpha_1, \ldots, h\alpha_N) \in \mathbb{R}^{N+2}
\end{equation}
using the invertible transformation $U_{c} = T\left(U_{p}\right)$, i.e., $U_{p} = T^{-1}\left(U_{c}\right)$.

By simple manipulation of \eqref{eq:p}, we obtain the convective form of the equations
\begin{equation}\label{eq:c}
    \frac{\partial U_{c}}{\partial t} + A_{c} \frac{\partial U_{c}}{\partial x} = 0,
\end{equation}
using the transformed system matrix $A_{c} = \frac{\partial T\left(U_{p}\right)}{\partial U_{p}} A_{p} \left(\frac{\partial  T\left(U_{p}\right)}{\partial U_{p}}\right)^{-1}\in \mathbb{R}^{(N+2)\times(N+2)}$. Similarly, we have $A_{p} = \left(\frac{\partial T\left(U_{p}\right)}{\partial U_{p}}\right)^{-1} A_{c} \frac{\partial  T\left(U_{p}\right)}{\partial U_{p}}$.

The variable transformation uses the following transformation matrices
\begin{equation}\label{eq:dervars}
    \frac{\partial T\left(U_{p}\right)}{\partial U_{p}} =
    \left(\begin{array}{ccccc}
    1 & & & &\\
    u_m & h & & &\\
    \alpha_1 & & \ddots & &\\
    \vdots & & & \ddots &\\
    \alpha_N & & & & h
    \end{array}\right) \in \mathbb{R}^{(N+2)\times(N+2)},
\end{equation}
\begin{equation}\label{eq:inv_dervars}
    \frac{\partial T\left(U_{p}\right)}{\partial U_{p}}^{-1} =
    \left(\begin{array}{ccccc}
        1 & & & &\\
        -\frac{u_m}{h} & \frac{1}{h} & & &\\
        -\frac{\alpha_1}{h} & & \ddots & &\\
        \vdots & & & \ddots &\\
        -\frac{\alpha_w}{h} & & & & \frac{1}{h}
    \end{array}\right) \in \mathbb{R}^{(N+2)\times(N+2)}.
\end{equation}

An important property of equations like \eqref{eq:c} is hyperbolicity, for which the system matrix $A_{c}$ needs to have only real eigenvalues with a full set of eigenvectors. This is equivalent to the primitive system matrix $A_{p}$ having the same properties, since $A_{p}$ is obtained as the result of a similarity transformation. Note that while $A_{c}$ and $A_{p}$ share the same eigenvalues, the eigenvectors are different due to the transformation.
We will analyze all derived models in this paper with respect to hyperbolicity and see that some of them are hyperbolic regardless of the values of $U_{c}$, making them globally hyperbolic. Others are only hyperbolic in a certain range of values $U_{c}$, while loosing hyperbolicity outside that range, making them only locally hyperbolic.

\subsection{SWME in convective and primitive variables}
\label{sec:SWME_c_p}
We review the SWME derived in \cite{Kowalski2019} and write them for the first time in both convective variables \eqref{eq:c} and primitive variables \eqref{eq:p}, including the analytical convective system matrix $A_{c}^{\textrm{SWME}}$ and primitive system matrix $A_{p}^{\textrm{SWME}}$.
This serves as the basis of obtaining new hyperbolic moment models in the next sections.

The frictionless 1D SWME system as derived in \cite{Kowalski2019}, consists of $N+2$ equations for $h$, $hu_m$ and $h \alpha_i$ for $i=1, \ldots, N$ and is written as
\begin{eqnarray}
    \partial_t h+\partial_x\left(h u_m\right) &=& 0, \label{eq:SWME_h}\\
    \partial_t\!\left(h u_m\right)+\partial_x\!\left(\!h\!\left(\!u_m^2+\sum_{j=1}^N \frac{\alpha_j^2}{2 j+1}\!\right)\!+\!\frac{1}{2} g h^2\!\right)\! &=& 0, \label{eq:SWME_um}\\
    \partial_t\!\left(h \alpha_i\right)+\partial_x\!\left(\!h\!\left(\!2 u_m \alpha_i+\sum_{j, k=1}^N A_{i j k} \alpha_j \alpha_k\!\right)\!\right)\! &=&  u_m \partial_x\!\left(h \alpha_i\right)\! -\! \sum_{j, k=1}^N \!B_{i j k} \alpha_k \partial_x\left(h \alpha_j\right)\!, \label{eq:SWME_alphai}
\end{eqnarray}
where the following coefficients are used
\begin{equation}\label{eq:Aijk_Bijk}
    A_{i j k}=(2 i+1) \int_0^1 \phi_i \phi_j \phi_k d \zeta, \quad B_{i j k}=(2 i+1) \int_0^1 \phi_i^{\prime}\left(\int_0^\zeta \phi_j d \zeta\right) \phi_k d \zeta.
\end{equation}
Note that we consider the 1D case only for simplicity. As was recently shown, the extension of existing models to either 2D axisymmetric settings \cite{Verbiest2025} or full 2D settings \cite{Bauerle2025} is possible and not considered in this work. The frictionless case is considered as we focus on the transport properties.

The following lemma states the form of the convective system matrix.
\begin{lemma}
    The SWME \eqref{eq:SWME_h}-\eqref{eq:SWME_alphai} can be written in the form of \eqref{eq:c} using the convective variables $U_{c}$ \eqref{eq:c_vars} as
    \begin{equation}\label{eq:SWME_c}
        \frac{\partial U_{c}}{\partial t} + A^{\textrm{SWME}}_{c} \frac{\partial U_{c}}{\partial x} = 0,
    \end{equation}
    using the convective SWME system matrix $A^{\textrm{SWME}}_{c} \in \mathbb{R}^{(N+2)\times(N+2)}$ defined as
    \begin{equation}\label{eq:A_SWME_c}
        A^{\textrm{SWME}}_{c} =
        \left(\begin{array}{ccccc}
        0 & 1 & 0 & \cdots & 0 \\
        - u_m^2 + gh - \sum\limits_{j=1}^{N} \frac{\alpha_j^2}{2j+1} & 2 u_m & \frac{2}{3} \alpha_1 & \cdots & \frac{2}{2 N+1} \alpha_N \\
        -2 u_m \alpha_1 - \sum\limits_{j,k=1}^{N} A_{1jk} \alpha_j \alpha_k & 2 \alpha_1 &  & & \\
        \vdots & \vdots & & \mathcal{A} &  \\
        -2 u_m \alpha_N - \sum\limits_{j,k=1}^{N} A_{Njk} \alpha_j \alpha_k & 2 \alpha_N &  & &
        \end{array}\right),
    \end{equation}
    where the lower right block matrix $\mathcal{A} \in \mathbb{R}^{N \times N}$ is defined by
    \begin{equation}\label{eq:A_block}
        \mathcal{A}_{i,l} = \sum\limits_{j=1}^{N} \left( B_{ilj} + 2 A_{ijl} \right) \alpha_j + u_m \delta_{i,l},
    \end{equation}
    with Kronecker delta $\delta_{i,j}$ and $A,B$, defined in \eqref{eq:Aijk_Bijk}.
\end{lemma}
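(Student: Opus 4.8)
The plan is to start from the three scalar conservation laws \eqref{eq:SWME_h}--\eqref{eq:SWME_alphai} and rewrite each one in the quasilinear form $\partial_t U_c + A_c^{\textrm{SWME}} \partial_x U_c = 0$ by expanding the flux divergences and collecting the coefficients of $\partial_x h$, $\partial_x(hu_m)$, and $\partial_x(h\alpha_l)$. The governing variable is $U_c = (h, hu_m, h\alpha_1, \ldots, h\alpha_N)$, so the key bookkeeping step is to express every term $u_m$ and $\alpha_i$ appearing in the fluxes in terms of these conserved quantities, namely $u_m = (hu_m)/h$ and $\alpha_i = (h\alpha_i)/h$, and then differentiate using the product/quotient rule. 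Each row of the resulting matrix is then read off as the vector of partial derivatives of the corresponding flux (plus the nonconservative right-hand side contributions for the moment equations) with respect to $U_c$.

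I would proceed row by row. The first row is immediate: $\partial_t h + \partial_x(hu_m) = 0$ already has the flux $hu_m$, the second component of $U_c$, giving the row $(0,1,0,\ldots,0)$. For the momentum row \eqref{eq:SWME_um}, I would write the flux as $h u_m^2 + \sum_j \frac{h\alpha_j^2}{2j+1} + \frac{1}{2}gh^2$ and express it in conserved variables as $\frac{(hu_m)^2}{h} + \sum_j \frac{(h\alpha_j)^2}{(2j+1)h} + \frac{1}{2}gh^2$; differentiating with respect to each $U_c$ component yields exactly the entries $-u_m^2 + gh - \sum_j \frac{\alpha_j^2}{2j+1}$ in the $h$-column, $2u_m$ in the $hu_m$-column, and $\frac{2}{2j+1}\alpha_j$ in the $h\alpha_j$-columns. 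For the moment rows \eqref{eq:SWME_alphai}, the conservative flux contributes $2u_m\alpha_i + \sum_{j,k} A_{ijk}\alpha_j\alpha_k$, and here the main subtlety is that these equations are genuinely nonconservative: the terms $u_m\partial_x(h\alpha_i)$ and $-\sum_{j,k} B_{ijk}\alpha_k\partial_x(h\alpha_j)$ on the right-hand side must be moved to the left and folded into the matrix. This is precisely where the $B_{ilj}$ contribution and the $u_m\delta_{i,l}$ diagonal term in the block $\mathcal{A}$ originate, alongside the $2A_{ijl}$ term coming from differentiating the quadratic convective flux.

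The verification that the lower-right block takes the stated form $\mathcal{A}_{i,l} = \sum_j (B_{ilj} + 2A_{ijl})\alpha_j + u_m\delta_{i,l}$ is the step I expect to demand the most care. One must correctly differentiate $\sum_{j,k} A_{ijk}\alpha_j\alpha_k$ with respect to $h\alpha_l$: using $\alpha_j = (h\alpha_j)/h$ and the symmetry structure of the integrals in \eqref{eq:Aijk_Bijk}, the chain rule produces $\sum_j (A_{ijl} + A_{ilj})\alpha_j = 2\sum_j A_{ijl}\alpha_j$ only if one exploits the symmetry of $A_{ijk}$ in its last two indices (immediate from the integral defining it). Simultaneously, rewriting $-\sum_{j,k} B_{ijk}\alpha_k\partial_x(h\alpha_j)$ as a matrix acting on $\partial_x U_c$ contributes $-B_{ilk}\alpha_k$ after relabeling, and moving it to the left side flips the sign to give the $+\sum_j B_{ilj}\alpha_j$ term; the index relabeling here (swapping the roles of $j$ and $l$) is the most error-prone bookkeeping. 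Finally, I would collect the remaining $u_m\partial_x(h\alpha_i)$ term, which upon transfer to the left contributes $+u_m$ on the diagonal, yielding the $u_m\delta_{i,l}$ piece, and I would double-check the first two columns of the moment rows ($-2u_m\alpha_i - \sum_{j,k}A_{ijk}\alpha_j\alpha_k$ and $2\alpha_i$) against the $h$- and $(hu_m)$-derivatives of the full flux including the nonconservative terms.
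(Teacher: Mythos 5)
Your proposal is correct and follows essentially the same route as the paper: a constructive, row-by-row expansion of the flux divergences and nonconservative terms into coefficients of $\partial_x h$, $\partial_x(hu_m)$, and $\partial_x(h\alpha_l)$, using the symmetry $A_{ijk}=A_{ikj}$ and the index relabeling of the $B$-term exactly as the paper does. One small bookkeeping caution: the diagonal entry $u_m\delta_{i,l}$ does not come from transferring $u_m\partial_x(h\alpha_i)$ to the left contributing $+u_m$ (that transfer contributes $-u_m$); it is the net of the $+2u_m$ from differentiating the flux term $2u_m h\alpha_i$ with respect to $h\alpha_i$ and the $-u_m$ from the transferred right-hand-side term.
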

\begin{proof}
    We give a constructive proof and derive each row of the matrix $A^{\textrm{SWME}}_{c}$.

    \noindent(1): The first row is trivially given due to the first equation from \eqref{eq:SWME_h}.

    \noindent(2): For the second row, we note for the individual terms
        \begin{eqnarray}
            \partial_x\left(h u_m^2\right)& =& -u_m^2 \partial_x h+2 u_m \partial_x\left(h u_m\right), \\
            \partial_x\left(\frac{g}{2} h^2\right)& =&g h \partial_x h, \\
            \partial_x\left(\sum_{j=1}^N \frac{h \alpha_j^2}{2 j+1}\right)& =&\ldots=-\sum_{j=1}^N \frac{1}{2 j+1} \alpha_j^2 \partial_x h+\sum_{j=1}^N \frac{2}{2 j+1} \alpha_j \partial_x\left(h \alpha_j\right).
        \end{eqnarray}
        So that we can write the second equation from \eqref{eq:SWME_um} as
        \begin{equation}
            \partial_{x}\left(h u_m\right)+\left(g h-u_m^2-\sum_{j=1}^N \frac{\alpha_j^2}{2j+1}\right) \partial_x h+2 u_m \partial_x\left(h u_m\right)+\sum_{j=1}^N \frac{2}{2 j+1} \alpha_j \partial_x\left(h \alpha_j\right)=0.
        \end{equation}

    \noindent$(2+i)$: For the remaining moment equations for $i=1,\ldots, N$, we first note that
        \begin{eqnarray}
            \partial_x\left(2 u_m h \alpha_i\right)-u_m \partial_x\left(h \alpha_i\right) & =&2 u_m \partial_x\left(h \alpha_i\right)+2 h \alpha_i \partial_x\left(u_m\right)-u_m \partial_x\left(h \alpha_i\right) \\
            & =&u_m \partial_x\left(h \alpha_i\right)-2 u_m \alpha_i \partial_x h+2 u_m \alpha_i \partial_x h+2 h \alpha_i \partial_x u_m \\
            & =&u_m \partial_x\left(h \alpha_i\right)-2 u_m \alpha_i \partial_x h+2 \alpha_i \partial_x\left(h u_m\right).
        \end{eqnarray}
        In the same way, we can derive
        \begin{eqnarray}
            \partial_x \sum_{j,k=1}^N A_{i j k} h \alpha_j \alpha_k & =&\sum_{j,k=1}^N A_{i j k} \left( \alpha_j \alpha_k \partial_x h+ \alpha_{j} h \partial_x \alpha_k +  \alpha_k h \partial_x \alpha_j \right) \\
            & = &\sum_{j,k=1}^N A_{i j k} \alpha_j \partial_x\left(h \alpha_k\right)+A_{i j k} \alpha_k \partial_x\left(h \alpha_j\right)-A_{i j k} \alpha_j \alpha_k \partial_x h \\
            & =& \sum_{j,k=1}^N 2 A_{i j k} \alpha_j \partial_x\left(h \alpha_k\right)-\sum_{j,k=1}^N A_{i j k} \alpha_j \alpha_k \partial_x h,
        \end{eqnarray}
        where the last step used the symmetry of $A_{i j k}=A_{i k j}$.
        This means that the $i-$th equation of \eqref{eq:SWME_alphai} can be written as
        \begin{eqnarray}
            \partial_t\left(h \alpha_i\right) + \left( -2u_m \alpha_1 -\sum_{j,k=1}^N A_{i j k} \alpha_j \alpha_k \right) \partial_x h + 2\alpha_i \partial_x \left(h u_m \right) + \sum_{l=1}^N \left( \mathcal{A}_{i,l} \right) \partial_x \left( h \alpha_l\right),
        \end{eqnarray}
        for $\mathcal{A}_{i,l} = \sum\limits_{j=1}^N \left(B_{ilj} + 2 A_{i j l}\right) \alpha_j + u_m \delta_{i,l}$ as defined in \eqref{eq:A_block}.

        The elements of the matrix $A^{\textrm{SWME}}_{c}$ can then directly be read as the coefficients in front of the derivatives.
\end{proof}

The next lemma states the system matrix in the primitive variables.
\begin{lemma}\label{lemma:SWME_p}
    The SWME \eqref{eq:SWME_h}-\eqref{eq:SWME_alphai} and its convective form \eqref{eq:A_SWME_c} can be written in the primitive form of \eqref{eq:p} using the primitive variables $U_{p}$ \eqref{eq:p_vars} as
    \begin{equation}\label{eq:SWME_p}
        \frac{\partial U_{p}}{\partial t} + A^{\textrm{SWME}}_{p} \frac{\partial U_{p}}{\partial x} = 0,
    \end{equation}
    using the primitive SWME system matrix $A^{\textrm{SWME}}_{p} \in \mathbb{R}^{(N+2)\times(N+2)}$ defined as
    \begin{equation}\label{eq:A_SWME_p}
        A^{\textrm{SWME}}_{p} =
        \left(\begin{array}{ccccc}
        u_m & h & 0 & \cdots & 0 \\
        g + \frac{1}{h}\sum\limits_{j=1}^{N} \frac{\alpha_j^2}{2j+1} & u_m & \frac{2}{3} \alpha_1 & \cdots & \frac{2}{2 N+1} \alpha_N \\
        \frac{1}{h}\sum\limits_{j,k=1}^{N} \left(B_{1jk} + A_{1jk} \right) \alpha_j \alpha_k & \alpha_1 &  & & \\
        \vdots & \vdots & & \mathcal{A} &  \\
        \frac{1}{h}\sum\limits_{j,k=1}^{N} \left(B_{Njk} + A_{Njk} \right) \alpha_j \alpha_k & \alpha_N &  & &
        \end{array}\right),
    \end{equation}
    with the lower right block matrix $\mathcal{A} \in \mathbb{R}^{N \times N}$ from \eqref{eq:A_block} and $A,B$, defined in \eqref{eq:Aijk_Bijk}.
\end{lemma}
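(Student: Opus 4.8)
The plan is to obtain $A_p^{\textrm{SWME}}$ directly from the convective matrix $A_c^{\textrm{SWME}}$ of the preceding lemma, rather than from scratch, by applying the similarity transformation $A_p^{\textrm{SWME}} = J^{-1} A_c^{\textrm{SWME}} J$ with $J := \frac{\partial T(U_p)}{\partial U_p}$, already introduced in this section together with the explicit forms \eqref{eq:dervars} and \eqref{eq:inv_dervars} of $J$ and $J^{-1}$. The advantage of this route is that both $J$ and $J^{-1}$ differ from a diagonal matrix only in their first column, so the two matrix products collapse into one column operation followed by one row operation, each of which can be tracked entrywise.

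First I would compute $B := A_c^{\textrm{SWME}} J$. Since $J$ equals $h$ on the diagonal apart from its $(1,1)$ entry and carries $(1, u_m, \alpha_1, \ldots, \alpha_N)^\top$ in its first column, right-multiplication leaves columns $2, \ldots, N+2$ of $A_c^{\textrm{SWME}}$ merely scaled by $h$, while the new first column equals the first column of $A_c^{\textrm{SWME}}$ plus $u_m$ times its second column plus $\sum_{l=1}^N \alpha_l$ times its $(2+l)$-th column. Second, I would left-multiply by $J^{-1}$: its first row leaves the first row of $B$ unchanged, and for $i \geq 2$ row $i$ of $J^{-1}$ replaces row $i$ of $B$ by $\frac{1}{h}$ times itself minus $\frac{1}{h} v_{i-1}$ times the first row, where $v_{i-1}$ denotes the $(i-1)$-th entry of $(u_m, \alpha_1, \ldots, \alpha_N)$. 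The mass row, the second-column entries, and the top-block entries then follow by short subtractions (for instance $2u_m - u_m = u_m$ in the momentum row and $2\alpha_m - \alpha_m = \alpha_m$ in each moment row of the second column).

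The step I expect to carry the real content is the first column of the moment rows. Writing $m = i-2$ for the moment index of row $i \geq 3$, the column operation gives $B_{i,1} = -2u_m\alpha_m - \sum_{j,k=1}^N A_{mjk}\alpha_j\alpha_k + 2\alpha_m u_m + \sum_{l=1}^N \mathcal{A}_{m,l}\alpha_l$, in which the first and third terms already cancel. Inserting the definition \eqref{eq:A_block} of $\mathcal{A}_{m,l}$ turns the remainder into $\sum_{j,k=1}^N (-A_{mjk} + B_{mkj} + 2A_{mjk})\alpha_j\alpha_k + u_m\alpha_m$. Two manipulations then finish the computation: (i) the relabeling $j \leftrightarrow k$ in $\sum_{j,k} B_{mkj}\alpha_j\alpha_k = \sum_{j,k} B_{mjk}\alpha_j\alpha_k$, which is legitimate because $\alpha_j\alpha_k$ is symmetric even though $B_{ijk}$ is not; and (ii) the cancellation of the leftover $u_m\alpha_m$ against the $-v_{i-1}B_{1,1} = -\alpha_m u_m$ contribution from the row operation. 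Dividing by $h$ produces exactly $\frac{1}{h}\sum_{j,k=1}^N (B_{mjk}+A_{mjk})\alpha_j\alpha_k$, as stated in \eqref{eq:A_SWME_p}.

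Finally I would confirm that the lower-right block is preserved. For block positions $i,j \geq 3$ the first row of $B$ vanishes, so the row operation reduces to multiplication by $\frac{1}{h}$, which exactly undoes the scaling by $h$ introduced in the column step; hence $\mathcal{A}$ appears unchanged in both the convective and primitive matrices. Reading off all remaining entries reproduces \eqref{eq:A_SWME_p}. As an independent check one may instead derive the primitive form directly from the balance laws \eqref{eq:SWME_h}--\eqref{eq:SWME_alphai}, expanding each flux by the product rule and eliminating $\partial_x(h u_m)$ via the mass equation; both routes must yield the same $A_p^{\textrm{SWME}}$.
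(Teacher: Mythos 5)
Your proposal is correct and follows essentially the same route as the paper: both compute $A_p^{\textrm{SWME}} = J^{-1} A_c^{\textrm{SWME}} J$ with the explicit transformation matrices \eqref{eq:dervars} and \eqref{eq:inv_dervars}, the only difference being that you associate the product as $J^{-1}(A_c^{\textrm{SWME}} J)$ while the paper computes $(J^{-1}A_c^{\textrm{SWME}})J$. The decisive steps — the cancellation of the $\pm 2u_m\alpha_i$ terms, the relabeling $j\leftrightarrow k$ justified by the symmetry of $\alpha_j\alpha_k$ to collapse $\sum B_{ikj}\alpha_j\alpha_k$ into $\sum B_{ijk}\alpha_j\alpha_k$, and the preservation of the block $\mathcal{A}$ — coincide with the paper's computation.
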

\begin{proof}
    While it is possible to derive the primitive system directly from by computing the Jacobian in primitive variables from equations \eqref{eq:SWME_h}, \eqref{eq:SWME_um}, \eqref{eq:SWME_alphai}, we here use the variable transformation defined in the beginning of section \ref{sec:SWME} to transform between both sets of variables. This will later be useful for other models.

    We start from the convective system matrix $A^{\textrm{SWME}}_{c}$ and compute the primitive system matrix using the variable transformation $A^{\textrm{SWME}}_{p} = \frac{\partial T\left(U_{p}\right)}{\partial U_{p}}^{-1} A^{\textrm{SWME}}_{c} \frac{\partial  T\left(U_{p}\right)}{\partial U_{p}}$ with the transformation matrices given in \eqref{eq:dervars} and \eqref{eq:inv_dervars}.

    The first step is the multiplication $\frac{\partial T\left(U_{p}\right)}{\partial U_{p}}^{-1} A^{\textrm{SWME}}_{c}$. The effect of multiplication with $\frac{\partial T\left(U_{p}\right)}{\partial U_{p}}^{-1}$ from the left can be seen as division of most entries by $h$ and addition of a multiple of the first row to the second row. Since only the second element in the first row of $A^{\textrm{SWME}}_{c}$ is non-zero, this effectively only changes the second column. Using the convective system matrix from \eqref{eq:A_SWME_c}, we derive
    \begin{eqnarray}
        \frac{\partial T\left(U_{p}\right)}{\partial U_{p}}^{-1} A^{\textrm{SWME}}_{c} 
        &=& \frac{1}{h} \left(\begin{array}{ccccc}
        0 & h & 0 & \cdots & 0 \\
        - u_m^2 + gh - \sum\limits_{j=1}^{N} \frac{\alpha_j^2}{2j+1} & u_m & \frac{2}{3} \alpha_1 & \cdots & \frac{2}{2 N+1} \alpha_N \\
        -2 u_m \alpha_1 - \sum\limits_{j,k=1}^{N} A_{1jk} \alpha_j \alpha_k & \alpha_1 &  & & \\
        \vdots & \vdots & & \mathcal{A} &  \\
        -2 u_m \alpha_N - \sum\limits_{j,k=1}^{N} A_{Njk} \alpha_j \alpha_k & \alpha_N &  & &
        \end{array}\right)
    \end{eqnarray}

    The last step is the multiplication $\left( \frac{\partial T\left(U_{p}\right)}{\partial U_{p}}^{-1} A^{\textrm{SWME}}_{c}\right) \frac{\partial  T\left(U_{p}\right)}{\partial U_{p}}$.
    We first note that
    \begin{equation}
        \frac{1}{h} \cdot \frac{\partial  T\left(U_{p}\right)}{\partial U_{p}} = \left(\begin{array}{ccccc}
        \frac{1}{h} & & & &\\
         \frac{u_m}{h}& 1 & & &\\
         \frac{\alpha_1}{h}& & \ddots & &\\
        \vdots & & & \ddots &\\
        \frac{\alpha_N }{h}& & & & 1
        \end{array}\right)
    \end{equation}
    The effect of multiplication with $\frac{1}{h}\frac{\partial T\left(U_{p}\right)}{\partial U_{p}}$ from the right can thus be seen as keeping the columns $2,\ldots, N+2$ constant, but replacing the entries in the first column by a weighted sum of the whole column. Using the result of the first step, the new first column is computed as:
    \begin{itemize}
        \item[(1)] $(A^{\textrm{SWME}}_p)_{1,1} = h \frac{u_m}{h} = u_m$,
        \item[(2)] $(A^{\textrm{SWME}}_p)_{2,1} = \left( - u_m^2 + gh - \sum\limits_{j=1}^{N} \frac{\alpha_j^2}{2j+1}\right) \frac{1}{h} + u_m \frac{u_m}{h} + \sum\limits_{j=1}^{N} \frac{2}{2 j+1} \alpha_j \frac{\alpha_j}{h} = g + \frac{1}{h} \sum\limits_{j=1}^{N} \frac{\alpha_j^2}{2 j+1} $,
        \item[(2+i)] $(A^{\textrm{SWME}}_p)_{2+i,1} = \left( -2 u_m \alpha_i - \sum\limits_{j,k=1}^{N} A_{ijk} \alpha_j \alpha_k \right) \frac{1}{h} + \alpha_i \frac{u_m}{h} + \sum\limits_{k=1}^{N} \mathcal{A}_{k,j} \frac{\alpha_k}{h}$.

        Noting the definition of $\mathcal{A}_{i,k} = \sum\limits_{j=1}^{N} \left( B_{ikj} + 2 A_{ijk} \right) \alpha_j + u_m \delta_{i,k}$, we get
        \begin{eqnarray}
            (A^{\textrm{SWME}}_p)_{2+i,1} &=& \frac{1}{h}\sum\limits_{j,k=1}^{N} \left(B_{ijk} + A_{ijk} \right) \alpha_j \alpha_k.
        \end{eqnarray}
    \end{itemize}
    This completes the proof.
\end{proof}

\subsection{Existing hyperbolic regularizations of SWME}
Hyperbolicity of the SWME \eqref{eq:SWME_c} was extensively studied in \cite{Koellermeier2020c}. While the SWME is globally hyperbolic for $N=1$, it is only locally hyperbolic for $N\geq 2$. Importantly, for larger number of equations $N$ the SWME already looses hyperbolicity for infinitely small deviations from equilibrium, represented by small coefficients $\alpha_i$.

Several hyperbolic regularizations have been developed in \cite{Koellermeier2020c} and \cite{Koellermeier2020i} aiming at overcoming the loss of hyperbolicity.
These existing hyperbolic regularizations of the SWME are all based on the convective variables representation \eqref{eq:SWME_c}. We will briefly state the main ideas and the resulting system matrices for two popular hyperbolic regularizations, the HSWME derived in \cite{Koellermeier2020c} and the SWLME derived in \cite{Koellermeier2020i}, respectively.

We note that comparable hyperbolic regularizations were also derived in the field of rarefied gas dynamics, see e.g., \cite{Cai2012a,Cai2013b,Cai2014a,Fan2016}, and the related works \cite{Schaerer2015,McDonald2013}.

\subsubsection{Hyperbolic Shallow Water Moment Equations (HSWME)}
The so-called Hyperbolic Shallow Water Moment Equations (HSWME), were derived in \cite{Koellermeier2020c} based on the observation that the SWME model from \eqref{eq:SWME_c} is still hyperbolic for $N=1$. Keeping the first coefficient $\alpha_1$, but setting all higher-order expansion coefficients $\alpha_i=0$ for $i \geq 2$, in the system matrix $A^{\textrm{SWME}}_{c}$ from \eqref{eq:SWME_c}, i.e. $A^{\textrm{SWME}}_{c}\left(h,hu_m,h \alpha_1, 0, \ldots, 0\right) = A^{\textrm{HSWME}}_{c}$ leads to the new system matrix as shown in \cite{Koellermeier2020c}, resulting in
\begin{equation}\label{eq:A_HSWME_c}
    A^{\textrm{HSWME}}_{c} =
    \left(\begin{array}{cccccc}
         & 1 &  &  &  &  \\
        -u_m^2 + gh - \frac{\alpha_1^2}{3} & 2u_m & \frac{2}{3} \alpha_1 & &\\
        -2u_m \alpha_1 & 2\alpha_1 & u_m & \frac{3}{5} \alpha_1 & &\\
        -\frac{2}{3}\alpha_1^2 &   & \frac{1}{3}\alpha_1 & u_m & \ddots & \\
         &  &  & \ddots & \ddots & \frac{N+1}{2N+1}\alpha_1 \\
         &  &  &  & \frac{N-1}{2N-1}\alpha_1 & u_m
    \end{array}\right),
\end{equation}
where all omitted entries are zero.

For the HSWME system matrix \eqref{eq:A_HSWME_c}, it is possible to compute analytically the characteristic polynomial, based on successive development with respect to rows and columns. For details we refer to the proof of Theorem 3.2 in \cite{Koellermeier2020c} and we here only state the resulting characteristic polynomial $\chi_c^{\textrm{HSWME}}$ as
\begin{equation}\label{eq:chi_HSWME_c}
    \chi_c^{\textrm{HSWME}}(\lambda) = \chi_{A_2}(\lambda- u_m) \cdot \left( (\lambda-u_m)^2 -gh-\alpha_1^2\right),
\end{equation}
where $\chi_{A_2}$ is the characteristic polynomial of the matrix $A_2$ defined by
\begin{equation}\label{eq:A_2}
        A_2 =
        \left(\begin{array}{cccc}
         & c_2 & &  \\
        a_2 & & \ddots &  \\
         & \ddots & & c_N \\
         &  & a_N &  \\
         \end{array}\right) \in \mathbb{R}^{N \times N},
\end{equation}
with off-diagonal elements $a_i= \frac{i-1}{2i-1} \alpha_1$ and $c_i= \frac{i+1}{2i+1} \alpha_1$, for $i=2, \ldots, N$.

The matrix $A_2$ \eqref{eq:A_2} is the lower-right block part of the convective system matrix $A^{\textrm{HSWME}}_{c}$ \eqref{eq:A_HSWME_c} without diagonal. Its entries can thus be interpreted as the remaining coupling terms between the moment coefficients in the higher order equations of the model. Note that the full (non hyperbolic) SWME system matrix $A^{\textrm{SWME}}_{c}$ \eqref{eq:SWME_c} includes a more dense counterpart $\mathcal{A}$ given by \eqref{eq:A_block} instead. The simplification from $\mathcal{A}$ to $A_2$ is due to the linearization of the system matrix around linear velocity profiles, significantly simplifying the computation of eigenvalues.

In \cite{Koellermeier2020c}, the roots of $\chi_{A_2}(\lambda)$ were only shown to be real by numerically computing them for some $N$. However, as a variant of a similar result proven in \cite{Bauerle2025}, we can prove the following statement.

\begin{lemma}\label{lemma:A_2_chi}
    The matrix $A_2 \in \mathbb{R}^{N \times N}$ in \eqref{eq:A_2} has the following characteristic polynomial
    \begin{equation}\label{eq:A_2_char_poly}
        \chi_{A_2}(\lambda) = \frac{(-\alpha_1)^NN!}{(2N+1)!!} \cdot P^{'}_{N+1}\left(\frac{\lambda}{\alpha_1}\right),
    \end{equation}
    where $P'_{N+1}$ is the derivative of the $N+1$-degree Legendre polynomial, orthogonal on $[-1,1]$ and normalized with $P_{N}(1)=1$.
\end{lemma}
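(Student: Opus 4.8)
The plan is to exploit the fact that $A_2$ in \eqref{eq:A_2} is tridiagonal with a vanishing main diagonal, so that its characteristic polynomial is generated by a three-term recurrence, and then to match that recurrence to a known three-term recurrence satisfied by the derivatives of the Legendre polynomials.

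First I would set $D_n(\lambda) = \det\!\left(A_2^{(n)} - \lambda I\right)$, the characteristic polynomial of the leading $n\times n$ principal submatrix $A_2^{(n)}$, so that $\chi_{A_2} = D_N$. The convention $\det(A_2 - \lambda I)$, rather than $\det(\lambda I - A_2)$, is the one consistent with the claimed formula, whose leading term is $(-1)^N\lambda^N$. Expanding the determinant along the last row and column gives the standard tridiagonal recurrence
\begin{equation*}
D_n = -\lambda\, D_{n-1} - a_n c_n\, D_{n-2}, \qquad D_0 = 1,\quad D_1 = -\lambda,
\end{equation*}
where the off-diagonal product is
\begin{equation*}
a_n c_n = \frac{(n-1)(n+1)}{(2n-1)(2n+1)}\,\alpha_1^2 .
\end{equation*}

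The key analytic ingredient is a recurrence for $P'_{n+1}$. Differentiating the Legendre recurrence $(n+1)P_{n+1}=(2n+1)xP_n - nP_{n-1}$ and then eliminating the non-derivative term via the identity $(2n+1)P_n = P'_{n+1}-P'_{n-1}$, I would obtain
\begin{equation*}
n\,P'_{n+1}(x) = (2n+1)\,x\,P'_n(x) - (n+1)\,P'_{n-1}(x).
\end{equation*}
I would then make the ansatz $D_n(\lambda) = \gamma_n\,\alpha_1^{\,n}\,P'_{n+1}(\lambda/\alpha_1)$ and prove it by induction. Substituting the inductive hypotheses for $D_{n-1}$ and $D_{n-2}$ into the minor recurrence, writing $x=\lambda/\alpha_1$, and using the derivative recurrence to re-express $xP'_n$, the coefficient of $P'_{n+1}$ forces $\gamma_n = -\tfrac{n}{2n+1}\gamma_{n-1}$, while the spurious $P'_{n-1}$ terms cancel identically, precisely because of this same relation applied one step earlier. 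Checking the base cases $\gamma_0=1$ and $\gamma_1=-1/3$ (using $P'_1=1$, $P'_2=3x$) and telescoping the product yields $\gamma_n = (-1)^n n!/(2n+1)!!$, so that at $n=N$ this is exactly the prefactor $(-\alpha_1)^N N!/(2N+1)!!$ in \eqref{eq:A_2_char_poly}.

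The main obstacle is obtaining the correct derivative recurrence for the Legendre polynomials: naive differentiation leaves a stray $P_n$ term, and the whole argument hinges on eliminating it cleanly through $(2n+1)P_n = P'_{n+1}-P'_{n-1}$. Beyond that, the only delicate points are bookkeeping the sign convention for the characteristic polynomial (which supplies the $(-1)^N$) and the double-factorial arithmetic; the induction itself is routine once the two recurrences are lined up.
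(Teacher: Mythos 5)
Your proposal is correct and follows essentially the same route as the paper: both arguments reduce the characteristic polynomial to a three-term recurrence coming from the tridiagonal structure and match it by induction against the Legendre-derivative recurrence $nP'_{n+1}(x)=(2n+1)xP'_n(x)-(n+1)P'_{n-1}(x)$, with the base cases and the telescoped product $\gamma_n=(-1)^n n!/(2n+1)!!$ supplying the prefactor. The only difference is bookkeeping: you recurse on the leading principal minors $D_n$ directly, whereas the paper works with the normalized associated polynomial sequence $q_i$ (dividing by $c_{i+1}$ at each step) and recovers the determinant at the end via the factor $(-1)^N\prod_{i=2}^N c_i$.
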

\begin{proof}
    We denote that $A_2 \in \mathbb{R}^{N\times N}$ in \eqref{eq:A_2} is a tridiagonal matrix, a special form of unreduced Hessenberg matrix. We can therefore proceed analogously as in \cite{Bauerle2025}, where the matrix $A_2$ is part of a slightly larger matrix called $\tilde{A}_{11}$. We note that the characteristic polynomial of $A_2$ can be computed in terms of a so-called associated polynomial sequence $q_i$ for $i=0,\ldots, N$. For the tridiagonal matrix $A_2$ in \eqref{eq:A_2}, the associated polynomial sequence $q_i$ is defined as
    \begin{eqnarray}
        q_0(\lambda) &=& 1, \label{eq:q_sequence1}\\
        q_i(\lambda) &=& \frac{1}{c_{i+1}} \left( \lambda q_{i-1}(\lambda) - a_i q_{i-2}(\lambda) \right), \textrm{ for } i=1,\ldots, N. \label{eq:q_sequence2}
    \end{eqnarray}
    Similar to \cite{Bauerle2025}, we can prove that the closed form of the sequence $q_i$ is given by
    \begin{equation}
        q_i(\lambda) = \frac{2(2i+3)}{3(i+2)(i+1)\alpha_1} P^{'}_{i+1}\left(\frac{\lambda}{\alpha_1}\right), \textrm{ for } i=1,\ldots, N-1,
    \end{equation}
    where $P^{'}_{N+1}$ is the derivative of the $N+1$-degree Legendre polynomial, orthogonal on $[-1,1]$ and normalized with $P_{N}(1)=1$. This follows directly from induction using
    \begin{eqnarray}
        q_{i+1}(\lambda) &=& \frac{1}{c_{i+2}} \left( \lambda q_{i}(\lambda) - a_{i+1} q_{i-1}(\lambda) \right) \\
        &=& \frac{2i+5}{(i+3) \alpha_1} \!\left( \!\lambda \frac{2(2i+3)}{3(i+2)(i+1)\alpha_1} P^{'}_{i+1}\!\left(\frac{\lambda}{\alpha_1}\right) \!-\! \frac{i \alpha_1}{2i+1} \frac{2(2i+1)}{3(i+1) i \alpha_1} P^{'}_{i}\!\left(\frac{\lambda}{\alpha_1}\right) \!\right) \\
        &=& \frac{2(2i+5)}{3(i+3)(i+2) \alpha_1} \!\left(\! \frac{\lambda}{\alpha_1} \frac{2i+3}{i+1} P^{'}_{i+1}\!\left(\frac{\lambda}{\alpha_1}\right) \!- \!\frac{i+2}{i+1} P^{'}_{i}\left(\frac{\lambda}{\alpha_1}\right) \right) \\
        &=& \frac{2(2i+5)}{3(i+3)(i+2)\alpha_1} P^{'}_{i+2}\!\left(\frac{\lambda}{\alpha_1}\right),
    \end{eqnarray}
    where we used the Legendre recursion formula $(i-1) P^{'}_i(\lambda) = (2i-1) \lambda P^{'}_{i-1} (\lambda)- i P^{'}_{i-2}(\lambda)$.

    We then directly derive that
    \begin{eqnarray}
        q_{N}(\lambda) &=& \frac{1}{c_{N+1}} \left( \lambda q_{N-1}(\lambda) - a_{N} q_{N-2}(\lambda) \right), \quad c_{N+1} =1, \quad a_N= \frac{N-1}{2N-1} \\
        &=& \lambda \frac{2(2N+1)}{3(N+1)N\alpha_1} P^{'}_{N}\left(\frac{\lambda}{\alpha_1}\right) - \frac{(N-1) \alpha_1}{2N-1} \frac{2(2N-1)}{3N (N-1) \alpha_1} P^{'}_{N-1}\left(\frac{\lambda}{\alpha_1}\right)  \\
        &=& \frac{2}{3} \frac{1}{N+1} \left( \frac{\lambda}{\alpha_1} \frac{2N+1}{N} P^{'}_{N}\left(\frac{\lambda}{\alpha_1}\right) - \frac{N+1 }{N} P^{'}_{N-1}\left(\frac{\lambda}{\alpha_1}\right) \right) \\
        &=& \frac{2}{3} \frac{1}{N+1} P^{'}_{N+1}\left(\frac{\lambda}{\alpha_1}\right),
    \end{eqnarray}
    again using the recurrence formula for $P^{'}_i$ in the last step.

    The characteristic polynomial $\chi_{A_2}(\lambda)$ is then computed using the identity matrix $I_N\in \mathbb{R}^{N \times N}$ according to
    \begin{eqnarray}
        \chi_{A_2}(\lambda) &=& \det(A_2- \lambda I_N) = (-1)^N \prod\limits_{i=2}^{N} c_i \cdot q_N(\lambda) = \frac{2(-1)^N \alpha_1}{3(N+1)} \prod\limits_{i=2}^{N} c_i \cdot P^{'}_{N+1}\left(\frac{\lambda}{\alpha_1}\right)\\
        &=& \frac{(-\alpha_1)^NN!}{(2N+1)!!} \cdot P^{'}_{N+1}\left(\frac{\lambda}{\alpha_1}\right),
    \end{eqnarray}
    similar to \cite{Bauerle2025} Theorem 4.3.
    This completes the proof.
\end{proof}

The result of Lemma \ref{lemma:A_2_chi} means that the eigenvalues of the HSWME system matrix are given by
\begin{eqnarray}\label{eq:HSWME_c_EV}
    \lambda_i, i=1,\ldots, N: P'_{N+1}\left(\frac{\lambda_i - u_m}{\alpha_1}\right)=0, \quad
    \lambda_{N+1,N+2} = u_m \pm \sqrt{gh + \alpha_1^2},
\end{eqnarray}
which are indeed real, resulting in a globally hyperbolic model.

We note that the hyperbolic regularization neglecting all higher-order coefficients in the system matrix is a severe simplification leading to potentially lower accuracy of the model. More precisely, model discrepancies can occur for non negligible values of $\alpha_i$ for $i\geq2$.


\begin{remark}
    A variant of the HSWME called $\beta$-HSWME uses a slightly different last row of the system matrix to obtain different eigenvalues. The interested reader is referred to \cite{Koellermeier2020c}, where this is discussed in more detail.
\end{remark}

\subsubsection{Shallow Water Linearized Moment Equations (SWLME)}
In \cite{Koellermeier2020i}, steady states of shallow water moment models were analyzed and it became clear that neither the original SWME nor the HSWME allow for an analytical computation of steady states for $N\geq 2$.

The paper then derives a simplified system that succeeds in having both global hyperbolicity and analytically computable steady states, called the Shallow Water Linearized Moment Equations (SWLME). The idea is that the higher-order moment equations are linearized around small values of the coefficients $\alpha_i=\mathcal{O}(\epsilon)$, neglecting terms of higher-order in $\epsilon$. Notably, this linearization is only applied in the last $N$ moment equations, while the mass and the momentum balance equations remain unchanged. This results in the following system matrix as derived in \cite{Koellermeier2020i}
\begin{equation}\label{eq:A_SWLME_c}
        A^{\textrm{SWLME}}_{c}=
        \left(\begin{array}{cccccc}
         & 1 &  &  &  &  \\
        -u_m^2 + gh - \sum\limits_{i=1}^{N}\frac{3 \alpha_1^2}{2i+1} & 2u_m & \frac{2}{3} \alpha_1 & \ldots & \ldots & \frac{2}{2N+1} \alpha_N\\
        -2u_m \alpha_1 & 2\alpha_1 & u_m & & &\\
        -2u_m \alpha_2 & 2\alpha_2  &  & u_m &  & \\
        \vdots & \vdots &  &  & \ddots &  \\
        -2u_m \alpha_N & 2\alpha_N &  &  &  & u_m
        \end{array}\right).
\end{equation}
It was shown that the eigenvalues of $A^{\textrm{SWLME}}_{c}$ are always real and steady states are given as a generalization of the standard SWE. We here only show the eigenvalues as a result from \cite{Koellermeier2020i}
\begin{eqnarray}\label{eq:SWLME_c_EV}
    \lambda_i=u_m, \, i=1,\ldots, N, \quad
    \lambda_{N+1,N+2} = u_m \pm \sqrt{gh + \alpha_1^2 + \sum\limits_{i=2}^N\frac{\alpha_i^2}{2N+1}},
\end{eqnarray}
which are notably simpler than the ones for the HSWME \eqref{eq:HSWME_c_EV}.

In \cite{Koellermeier2020i}, steady states of the SWLME system were derived as generalizations of steady states for the standard SWE and read
\begin{equation}\label{eq:SWLME_steady_states}
    h=h_0 ~\textrm{ or } ~ -\!\Fr^2 + \frac{1}{2} \left(\!\left(\frac{h}{h_0}\right)^2\!+\!\left(\frac{h}{h_0}\right)\!\right) + \sum\limits_{i=1}^{N}\frac{\Ma_i^2 \Fr^2}{2i+1}  \left(\! \left(\frac{h}{h_0}\right)^3\!+\!\left(\frac{h}{h_0}\right)^2\!+\!\left(\frac{h}{h_0}\right)\!\right)\!=\!0,
\end{equation}
where, in addition to the Froude number $\Fr=\frac{u_m}{\sqrt{gh}}$, $\Ma_i = \frac{\alpha_{i,0}}{u_{m,0}}$ denotes the dimensionless $i$-th moment number for $i=1, \ldots, N$ at a reference state.
Note how for $\alpha_i = 0$ the steady states revert to the standard SWE steady states for a hydraulic water jump.

While the SWLME \eqref{eq:A_SWLME_c} are successful in obtaining hyperbolicity and analytical steady states, they lack physical accuracy as already demonstrated in \cite{Huang2022}, due to the lack of non-linear moment equations modeling the evolution of the expansion coefficients. It seems that the regularization employed by the SWLME is too strong an assumption to preserve accuracy.

\section{Moment regularization for Hyperbolic Shallow Water Moment Equations (MHSWME)}
\label{sec:MHSWME}
Based on the results of the previous two sections, we can conclude that two approaches successfully lead to a regularized, globally hyperbolic system of moment equations:
\begin{enumerate}
    \item[(1)] HSWME approach: in the convective system matrix, setting all higher coefficients $\alpha_i=0$ for $i\geq 2$, and
    \item[(2)] SWLME approach: in the convective system matrix, keeping the momentum balance equation exactly and assuming only in the higher moment equations small coefficients $\alpha_i=\mathcal{O}(\epsilon)$ for $i\geq 1$, so that contributions of $\mathcal{O}(\epsilon^2)$ can be neglected.
\end{enumerate}
Both approaches start with the convective system matrix. A main difference between approaches (1) and (2) is the treatment of the momentum balance equation.

An obvious question is now what happens if the HSWME approach (1) is performed while keeping the momentum equation unchanged.

In this section, we will perform the regularization in convective variables by setting only in the higher moment equations all higher coefficients $\alpha_i=0$ for $i\geq 2$. We consider this as a Moment regularization for Hyperbolic Shallow Water Moment Equations and denote the resulting system as MHSWME. As we will see, the MHSWME system is locally hyperbolic in a reasonably large and non-decreasing domain around equilibrium, but not globally hyperbolic. Also, steady states are not obtainable analytically.

\begin{theorem}\label{th:hybrid_convective_system}
    Setting the higher-order coefficients $\alpha_i=0$ for $i\geq 2$ in the convective system matrix \eqref{eq:A_SWME_c} only in the last $N$ moment equations, results in the following MHSWME system matrix in convective variables:
    \begin{equation}\label{eq:A_MHSWME_c}
        A^{\textrm{MHSWME}}_{c} =
        \left(\begin{array}{cccccc}
             & 1 &  &  &  &  \\
        - u_m + gh - \sum\limits_{j=1}^{N} \frac{\alpha_j^2}{2j+1} & 2 u_m & \frac{2}{3} \alpha_1 & \frac{2}{5} \alpha_2 &\cdots & \frac{2}{2 N+1} \alpha_N \\
            -2u_m \alpha_1 & 2\alpha_1 & u_m & \frac{3}{5} \alpha_1 & &\\
            -\frac{2}{3}\alpha_1^2 &   & \frac{1}{3}\alpha_1 & u_m & \ddots & \\
             &  &  & \ddots & \ddots & \frac{N+1}{2N+1}\alpha_1 \\
             &  &  &  & \frac{N-1}{2N-1}\alpha_1 & u_m
        \end{array}\right),
    \end{equation}

    which has the characteristic polynomial
    \begin{equation}\label{eq:A_MHSWME_char}
        \chi_{A^{\textrm{MHSWME}}_{c}}(\lambda) = \frac{(-\alpha_1)^NN!}{(2N+1)!!} \cdot P^{'}_{N+1}\left(\frac{\lambda-u_m}{\alpha_1}\right) \cdot \left( \left( \lambda - u_m \right)^2 - gh - \alpha_1^2 + \sum\limits_{i=2}^N\frac{\alpha_i^2}{2i+1}\right),
    \end{equation}
    and the eigenvalues $\lambda_1, \ldots, \lambda_{N+2}$ of $A^{\textrm{MHSWME}}_{c}$ are given by
    \begin{eqnarray}\label{eq:A_MHSWME_EV}
        \lambda_i, i=1,\ldots, N: P'_{N+1}\left(\frac{\lambda_i - u_m}{\alpha_1}\right)=0,
        \lambda_{N+1,N+2} = u_m \pm \sqrt{gh + \alpha_1^2 - \sum\limits_{i=2}^N\frac{\alpha_i^2}{2i+1}}.\\
    \end{eqnarray}
\end{theorem}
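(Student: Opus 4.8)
The plan is to establish the three claims of the theorem in sequence: that the stated reduction produces the matrix \eqref{eq:A_MHSWME_c}, that its characteristic polynomial factors as \eqref{eq:A_MHSWME_char}, and finally to read off the eigenvalues \eqref{eq:A_MHSWME_EV}. For the matrix form I would start from the convective SWME matrix \eqref{eq:A_SWME_c} and impose $\alpha_i=0$ for $i\geq 2$ only in rows $3,\ldots,N+2$, leaving the momentum row untouched. The second-column entries $2\alpha_i$ then collapse to $2\alpha_1$ in row $3$ and to $0$ below, and the lower-right block $\mathcal{A}$ reduces to exactly the tridiagonal HSWME block of \eqref{eq:A_HSWME_c}. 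The only subtlety is the first column of the moment rows, where I must evaluate $-2u_m\alpha_i-\sum_{j,k}A_{ijk}\alpha_j\alpha_k$ at $\alpha_{\geq 2}=0$, i.e.\ $-2u_m\alpha_i-A_{i11}\alpha_1^2$. Using the definition \eqref{eq:Aijk_Bijk} together with orthogonality of the Legendre basis — $A_{111}=0$ (odd integrand), $A_{211}=\tfrac23$, and $A_{i11}=0$ for $i\geq 3$ since $\phi_i$ is orthogonal to all polynomials of degree $\leq 2$ — yields precisely the first column of \eqref{eq:A_MHSWME_c}.

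For the characteristic polynomial, the key observation is that $A^{\textrm{MHSWME}}_{c}$ and $A^{\textrm{HSWME}}_{c}$ differ only in the momentum row. By multilinearity of the determinant in that single row I would split $\det(A^{\textrm{MHSWME}}_{c}-\lambda I)=\chi_c^{\textrm{HSWME}}(\lambda)+\det(\tilde A)$, where $\chi_c^{\textrm{HSWME}}$ is already known from \eqref{eq:chi_HSWME_c} and Lemma \ref{lemma:A_2_chi}, and $\tilde A$ carries only the ``extra'' momentum entries $-\sum_{j\geq 2}\tfrac{\alpha_j^2}{2j+1}$ in position $(2,1)$ and $\tfrac{2}{2j+1}\alpha_j$ in columns $\geq 4$. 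To evaluate $\det(\tilde A)$ I would apply the column operation $\mathrm{col}_1\to\mathrm{col}_1+u_m\,\mathrm{col}_2$ and then decompose the new first column, again by multilinearity, into (i) the linear combination $-(\lambda-u_m)\,\mathrm{col}_2-2\alpha_1\,\mathrm{col}_3$ and (ii) a single spike $-\sum_{j\geq 2}\tfrac{\alpha_j^2}{2j+1}$ in row $2$. Part (i) contributes nothing, since the first column is then linearly dependent on columns $2$ and $3$; part (ii), expanded along column $1$, gives $-\sum_{j\geq 2}\tfrac{\alpha_j^2}{2j+1}$ times the cofactor $C_{21}$, and deleting row $2$ and column $1$ leaves a matrix whose determinant equals $\chi_{A_2}(\lambda-u_m)$. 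Hence $\det(\tilde A)=\chi_{A_2}(\lambda-u_m)\sum_{j\geq 2}\tfrac{\alpha_j^2}{2j+1}$, which combines with the HSWME factor $(\lambda-u_m)^2-gh-\alpha_1^2$ to reproduce \eqref{eq:A_MHSWME_char}.

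The eigenvalue formula \eqref{eq:A_MHSWME_EV} then follows by reading off the roots of the two factors: the zeros of $P'_{N+1}\big((\lambda-u_m)/\alpha_1\big)$ give $\lambda_1,\ldots,\lambda_N$, which are real and simple since the derivative of a Legendre polynomial has $N$ simple real roots by Rolle's theorem, and the quadratic factor gives $\lambda_{N+1,N+2}=u_m\pm\sqrt{gh+\alpha_1^2-\sum_{i\geq 2}\tfrac{\alpha_i^2}{2i+1}}$.

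I expect the main obstacle to be the second step, namely showing cleanly that the off-diagonal momentum entries $\tfrac{2}{2j+1}\alpha_j$ contribute nothing to the determinant. The linear-dependence argument above resolves this, but verifying that the combination $-(\lambda-u_m)\,\mathrm{col}_2-2\alpha_1\,\mathrm{col}_3$ matches the modified first column in \emph{every} row — including the deeper moment rows, where all three columns vanish — is the delicate bookkeeping on which the entire factorization rests. A useful consistency check is that the target polynomial \eqref{eq:A_MHSWME_char} contains only the squares $\alpha_i^2$ and no cross terms, which can occur only if the entries that are linear in $\alpha_j$ drop out exactly as claimed.
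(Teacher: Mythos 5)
Your proposal is correct, and for the characteristic polynomial it takes a genuinely different route from the paper. The paper follows the template of Theorem 3.2 in \cite{Koellermeier2020c}: it performs a full Laplace expansion along the first row and column, introduces the auxiliary subdeterminants $\left|A_i\right|$ and $\left|D_i\right|$ together with their recursion formulas, and only after regrouping identifies the result as the HSWME determinant plus a correction term $\left|\cdot\right|_2$, in which the coefficient of $\left|D_3\right|$ is shown to cancel, leaving $\sum_{i\geq 2}\tfrac{\alpha_i^2}{2i+1}\left|A_2\right|$. You obtain the same splitting immediately from multilinearity of the determinant in the momentum row (the only row in which $A_c^{\textrm{MHSWME}}$ and $A_c^{\textrm{HSWME}}$ differ), and you evaluate the correction determinant without any $\left|D_i\right|$ bookkeeping: the column operation $\mathrm{col}_1\to\mathrm{col}_1+u_m\,\mathrm{col}_2$ followed by the decomposition of the new first column into $-(\lambda-u_m)\,\mathrm{col}_2-2\alpha_1\,\mathrm{col}_3$ plus a single spike in row $2$ makes the first piece vanish by linear dependence, while the spike piece reduces to $\sum_{j\geq2}\tfrac{\alpha_j^2}{2j+1}\,\chi_{A_2}(\lambda-u_m)$ because deleting row $2$ removes the extra entries $\tfrac{2}{2j+1}\alpha_j$ entirely. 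I checked the decomposition entrywise (rows $1$, $3$, $4$ and the vanishing deeper rows) and the cofactor signs; they work out, and the argument correctly explains why no cross terms in the $\alpha_j$ survive. Your derivation of the matrix itself, via $A_{111}=0$, $A_{211}=\tfrac23$, $A_{i11}=0$ for $i\geq3$, is also more explicit than the paper's, which simply asserts that the last $N$ rows coincide with those of the HSWME. What your approach buys is brevity and transparency about where the extra $\alpha_i^2$ contributions come from; what the paper's buys is uniformity with the proofs of Theorems \ref{th:APHSWME_p_chi} and \ref{th:APMHSWME_p_chi}, which reuse the same $\left|A_i\right|$, $\left|D_i\right|$ machinery. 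Both arguments ultimately rest on the known HSWME factorization \eqref{eq:chi_HSWME_c} and Lemma \ref{lemma:A_2_chi}.
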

\begin{proof}
    The first two rows of the convective system matrix are taken from the general convective system matrix given in \eqref{eq:SWME_c} without changes. The higher-order moment equations, i.e., the last $N$ equations, are the same as in \eqref{eq:A_HSWME_c}.

    For the derivation of the characteristic polynomial, we follow the proof of Theorem 3.2 in \cite{Koellermeier2020c}, which also used the convective form of the equations.
    Starting with
    \begin{equation}
       A^{\textrm{MHSWME}}_{c} = \widetilde{A}^{\textrm{MHSWME}} + u_m I_{N+2}, \quad \widetilde{\lambda} = \lambda - u_m,
    \end{equation}
    we will compute
    \begin{equation}
       \chi_{A^{\textrm{MHSWME}}_{c}}(\lambda) = \det\left( \widetilde{A}^{\textrm{MHSWME}} - \widetilde{\lambda} I_{N+2} \right) =: \left| \widetilde{A}^{\textrm{MHSWME}} - \widetilde{\lambda} I_{N+2} \right|,
    \end{equation}
    with
    \begin{equation}
        \widetilde{A}^{\textrm{MHSWME}} - \widetilde{\lambda} I_{N+2} = \left(\begin{array}{cccccc}
        - \widetilde{\lambda}-u_m & 1 &  &  &  &  \\
        d_1 + \widetilde{d}_{1} & - \widetilde{\lambda} + u_m & d_2 & \widetilde{d}_{3} & \dots & \widetilde{d}_{{N+1}} \\
        d_3 & d_4 & - \widetilde{\lambda} & c_2 & &\\
        d_5 &   & a_2 & - \widetilde{\lambda} & \ddots & \\
         &  &  & \ddots & \ddots & c_N \\
         &  &  &  & a_N & - \widetilde{\lambda}
        \end{array}\right),
    \end{equation}
    where we use the following definitions
    \begin{equation}
        d_1 = gh - u_m^2 -\frac{1}{3}\alpha_1^2, \quad d_2 = \frac{2}{3}\alpha_1, \quad d_3 = -2 u_m \alpha_1, \quad d_4 = 2\alpha_1, \quad d_5 = -\frac{2}{3}\alpha_1^2,
    \end{equation}
    \begin{equation}
       \widetilde{d}_{1} = - \sum\limits_{i=2}^N \frac{\alpha_i^2}{2i+1}, \quad \widetilde{d}_{i} = \frac{2}{2i+1}\alpha_{i-1}, \textrm{ for } i=2,\ldots, N+1,
    \end{equation}
    and as usual we denote $a_i = \frac{i-1}{2i-1}\alpha_1$ and $c_i = \frac{i+1}{2i+1}\alpha_1$, for $i=2,\ldots, N$.
    Note how the entries $\widetilde{d}_{i}$ denote entries that are different with respect to the standard convective HSWME matrix in \eqref{eq:A_HSWME_c} and in the proof of Theorem 3.2 in \cite{Koellermeier2020c}.

    We continue to compute $\left| \widetilde{A}^{\textrm{MHSWME}} - \widetilde{\lambda} I_{N+2} \right|$ by first developing with respect to the first row and then with respect to the first column.
    \begin{eqnarray*}
    && \left| \widetilde{A}^{\textrm{MHSWME}} - \widetilde{\lambda} I_{N+2} \right| = \left|
        \begin{array}{cccccc}
            -\widetilde{\lambda} - u_m  & 1 &  &  &  & \\
            d_1+\widetilde{d}_{1} & -\widetilde{\lambda} + u_m & d_2 & \widetilde{d}_{3} & \dots & \widetilde{d}_{{N+1}}\\
            d_3 & d_4 & -\widetilde{\lambda} & c_2  &  &   \\
            d_5 &  & a_2 & -\widetilde{\lambda} & \ddots  &   \\
            &  &  & \ddots & \ddots & c_N \\
            &  &  &  & a_N & -\widetilde{\lambda}  \\
        \end{array}
        \right| \\
      &=& (-\widetilde{\lambda} - u_m)
      \left|
        \begin{array}{ccccc}
            -\widetilde{\lambda} + u_m & d_2 & \widetilde{d}_{3} & \dots & \widetilde{d}_{{N+1}}\\
            d_4 & -\widetilde{\lambda} & c_2  &  &   \\
            & a_2 & -\widetilde{\lambda} & \ddots  &   \\
            &  & \ddots & \ddots & c_N \\
            &  &  & a_N & -\widetilde{\lambda}   \\
        \end{array}
        \right|
        -1 \left|
        \begin{array}{ccccc}
            d_1+\widetilde{d}_{{1}} & d_2 & \widetilde{d}_{3} & \dots & \widetilde{d}_{{N+1}}\\
            d_3 &  -\widetilde{\lambda} & c_2  &  &   \\
            d_5 & a_2 & -\widetilde{\lambda} & \ddots  &   \\
            &   & \ddots & \ddots & c_N \\
            &   &  & a_N & -\widetilde{\lambda}   \\
        \end{array}
        \right| \\
        &=& (-\widetilde{\lambda} - u_m) \left( (-\widetilde{\lambda} + u_m)  \left|
        \begin{array}{cccc}
            -\widetilde{\lambda} & c_2  &  &   \\
            a_2 & -\widetilde{\lambda} & \ddots  &   \\
            & \ddots & \ddots & c_N \\
            &  & a_N & -\widetilde{\lambda}   \\
        \end{array}
        \right| -d_4  \left|
        \begin{array}{ccccc}
            d_2 & \widetilde{d}_{3} & \dots & \dots & \widetilde{d}_{{N+1}}\\
            a_2 &  -\widetilde{\lambda} & c_3  &  & \\
            & a_3 & -\widetilde{\lambda} & \ddots &\\
            & & \ddots & \ddots & c_N \\
            & &   & a_N & -\widetilde{\lambda}   \\
        \end{array}
        \right| \right) \\
        && - (d_1+\widetilde{d}_{{1}})  \left|
        \begin{array}{cccc}
            -\widetilde{\lambda} & c_2  &  &   \\
            a_2 & -\widetilde{\lambda} & \ddots  &   \\
            & \ddots & \ddots & c_N \\
            &  & a_N & -\widetilde{\lambda}   \\
        \end{array}
        \right| + d_3  \left|
        \begin{array}{ccccc}
            d_2 & \widetilde{d}_{3} & \dots & \dots & \widetilde{d}_{{N+1}}\\
            a_2 &  -\widetilde{\lambda} & c_3  &  & \\
            & a_3 & -\widetilde{\lambda} & \ddots &\\
            & & \ddots & \ddots & c_N \\
            & &   & a_N & -\widetilde{\lambda}   \\
        \end{array}
        \right| - d_5 \left|
        \begin{array}{cccccc}
            d_2 & \widetilde{d}_{3} & & \dots & & \widetilde{d}_{{N+1}}\\
            -\widetilde{\lambda} &  c_2 &  &  & & \\
            & a_3 & -\widetilde{\lambda} & c_4 & & \\
            & & a_4 & \ddots & \ddots & \\
            & & & \ddots & \ddots & c_N \\
            & & &  & a_N & -\widetilde{\lambda}   \\
        \end{array}
        \right|, 
    \end{eqnarray*}
    where the last term can be expressed as
    \begin{equation}
        \left|
        \begin{array}{cccccc}
            d_2 & \widetilde{d}_{3} & & \dots & & \widetilde{d}_{{N+1}}\\
            -\widetilde{\lambda} &  c_2 &  &  & & \\
            & a_3 & -\widetilde{\lambda} & c_4 & & \\
            & & a_4 & \ddots & \ddots & \\
            & & & \ddots & \ddots & c_N \\
            & & &  & a_N & -\widetilde{\lambda}   \\
        \end{array}
        \right| = d_{{2}} c_2 \left|
        \begin{array}{cccc}
            -\widetilde{\lambda} & c_4  &  &   \\
            a_4 & -\widetilde{\lambda} & \ddots  &   \\
            & \ddots & \ddots & c_N \\
            &  & a_N & -\widetilde{\lambda}   \\
        \end{array}
        \right|  + \widetilde{\lambda} \left|
        \begin{array}{ccccc}
            \widetilde{d}_{3} & \widetilde{d}_4 & \dots & \dots & \widetilde{d}_{N+1}\\
            a_3 &  -\widetilde{\lambda} & c_4  &  & \\
            & a_4 & -\widetilde{\lambda} & \ddots &\\
            & & \ddots & \ddots & c_N \\
            & &   & a_N & -\widetilde{\lambda}   \\
        \end{array}
        \right|.
    \end{equation}
    We now use the following definition from \cite{Koellermeier2020c} for the subdeterminant $ \left|A_i\right|$
    \begin{equation}\label{subdeterminant_A}
        \left|A_i\right| = \left|
        \begin{array}{cccc}
           -\widetilde{\lambda} & c_i  &  &   \\
          a_i & -\widetilde{\lambda} & \ddots  &   \\
       & \ddots & \ddots & c_N \\
         &  & a_N & -\widetilde{\lambda}   \\
        \end{array}
        \right|,
    \end{equation}
    and similarly we introduce a new definition for the subdeterminant $ \left|D_i\right|$
    \begin{equation}\label{subdeterminant_D}
        \left|D_i\right| = \left|
        \begin{array}{ccccc}
            \widetilde{d}_{i} & \widetilde{d}_{i+1} & \dots & \dots & \widetilde{d}_{N+1}\\
            a_i &  -\widetilde{\lambda} & c_{i+1}  &  & \\
            & a_{i+1} & -\widetilde{\lambda} & \ddots &\\
            & & \ddots & \ddots & c_N \\
            & &   & a_N & -\widetilde{\lambda}   \\
        \end{array}
        \right|.
    \end{equation}
    Note that from \eqref{subdeterminant_A} and \eqref{subdeterminant_D}, we directly get the recursion formulas
    \begin{equation*}
        \left|A_i\right| = \frac{1}{a_{i-2} c_{i-2}}\left( \left|A_{i-2}\right| + \widetilde{\lambda} \left|A_{i-1}\right| \right),
    \end{equation*}
    and
    \begin{equation*}
        \left|D_{i}\right| = \widetilde{d}_{{i}} \left|A_{i+1}\right| - a_{i}\left|D_{i+1}\right|, 
    \end{equation*}
    which also holds for $i=2$, when using $\widetilde{d}_{{2}}= d_2$.

    Using both definitions, we can first simplify the characteristic polynomial and then insert the recursion formula for $\left|D_2\right|$ to obtain
    \begin{eqnarray*}
        \left| \widetilde{A}^{\textrm{MHSWME}} - \widetilde{\lambda} I_{N+2} \right| &=& (-\widetilde{\lambda} - u_m)  \left( (-\widetilde{\lambda} + u_m)  \left|A_2\right| - d_4 \left( d_{2} \left|A_{3}\right| - a_{2}\left|D_{3}\right| \right) \right) \\
        && - (d_1+\widetilde{d}_{{1}})  \left|A_2\right| + d_3  \left( d_{2} \left|A_{3}\right| - a_{2}\left|D_{3}\right| \right) - d_5 \left( d_{2} c_2  \left|A_4\right|  + \widetilde{\lambda} \left|D_3\right| \right) \\
        &=& (-\widetilde{\lambda} - u_m)  \left( (-\widetilde{\lambda} + u_m)  \left|A_2\right| - d_2 d_4  \left|A_3\right| \right) - d_1 \left|A_2\right| + d_2 \left( d_3 \left|A_3\right| - c_2 d_5 \left|A_4\right| \right) \\
        && + (-\widetilde{\lambda} - u_m) d_4 a_2 \left|D_{3}\right| - \widetilde{d}_{{1}}  \left|A_2\right| + d_3 a_2 \left|D_{3}\right| - d_5 \widetilde{\lambda} \left|D_{3}\right|\\
        &=& \left| \widetilde{A}^{\textrm{MHSWME}} - \widetilde{\lambda} I_{N+2} \right|_1 + \left| \widetilde{A}^{\textrm{MHSWME}} - \widetilde{\lambda} I_{N+2} \right|_2\\
    \end{eqnarray*}
    where $\left| \widetilde{A}^{\textrm{MHSWME}} - \widetilde{\lambda} I_{N+2} \right|_1= \left| \widetilde{A}^{\textrm{HSWME}} - \widetilde{\lambda} I_{N+2} \right|$ was shown in Theorem 3.2 in \cite{Koellermeier2020c} to be equal to $\left|A_2\right| \left( \left(\lambda - u_m\right)^2 -gh -\alpha_1^2 \right)$, and $\left| \widetilde{A}^{\textrm{MHSWME}} - \widetilde{\lambda} I_{N+2} \right|_2$ can be computed as
    \begin{eqnarray*}
        \left| \widetilde{A}^{\textrm{MHSWME}} - \widetilde{\lambda} I_{N+2} \right|_2 &=& - \widetilde{d}_{{1}}  \left|A_2\right| + \left((-\widetilde{\lambda} - u_m) d_4 a_2  + d_3 a_2 - d_5 \widetilde{\lambda} \right) \left|D_{3}\right|\\
        &=& \sum\limits_{i=2}^N \frac{\alpha_i^2}{2i+1} \left|A_2\right| + \left( -\frac{2}{3} \widetilde{\lambda} - \frac{2}{3} u_m + \frac{2}{3}u_m + \frac{2}{3}\widetilde{\lambda} \right) \left|D_{3}\right|\\
        &=& \sum\limits_{i=2}^N \frac{\alpha_i^2}{2i+1} \left|A_2\right|.
    \end{eqnarray*}
    This means that the characteristic polynomial is given by
    \begin{equation}
        \left| \widetilde{A}^{\textrm{MHSWME}} - \widetilde{\lambda} I_{N+2} \right| = \left|A_2\right| \left( \left(\lambda - u_m\right)^2 -gh - \alpha_1^2 + \sum\limits_{i=2}^N \frac{\alpha_i^2}{2i+1}\right).
    \end{equation}
    The eigenvalues follow directly.
\end{proof}

From Theorem \ref{th:hybrid_convective_system} it is obvious that the MHSWME is not globally hyperbolic, since the eigenvalues $\lambda_{N+1,N+2} = u_m \pm \sqrt{gh + \alpha_1^2 - \sum\limits_{i=2}^N\frac{\alpha_i^2}{2i+1}}$ become complex-valued for $gh + \alpha_1^2 < \sum\limits_{i=2}^N\frac{\alpha_i^2}{2i+1}$. This can be the case for extreme non-equilibrium with large values of $\alpha_i$, for $i\geq 2$. However, in contrast to the SWME from \cite{Kowalski2019}, the hyperbolicity region does not shrink with increasing $N$. This is illustrated in Figure \ref{fig:HybridHypRegion}, plotting the hyperbolicity regions for MHSWME with $N=2$ and $N=3$. There is always a large hyperbolicity region around the origin at $\alpha_i = 0$, that does not shrink in size even as $N$ increases, as can also be seen from the formula of the eigenvalues $\lambda_{N+1,N+2}$. The MHSWME is thus only locally hyperbolic, but with reasonably large hyperbolicity region.

\begin{figure}[h!]
    \centering
    \begin{subfigure}[b]{0.47\textwidth}
        \centering
        \begin{overpic}[width=1\linewidth]{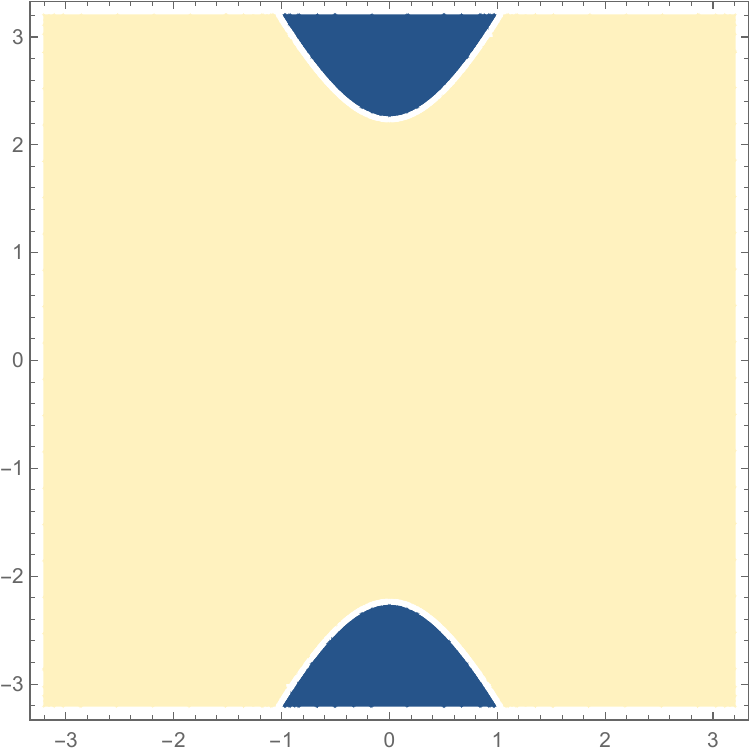}
        \put(87,11){$\frac{\alpha_1}{\sqrt{gh}}$}
        \put(7,93){$\frac{\alpha_2}{\sqrt{gh}}$}
        \end{overpic}
        \caption{$N=2$}
        \label{fig:HybridHypRegionN2}
    \end{subfigure}
    \hfill
    \begin{subfigure}[b]{0.47\textwidth}
        \centering
        \begin{overpic}[width=1\linewidth]{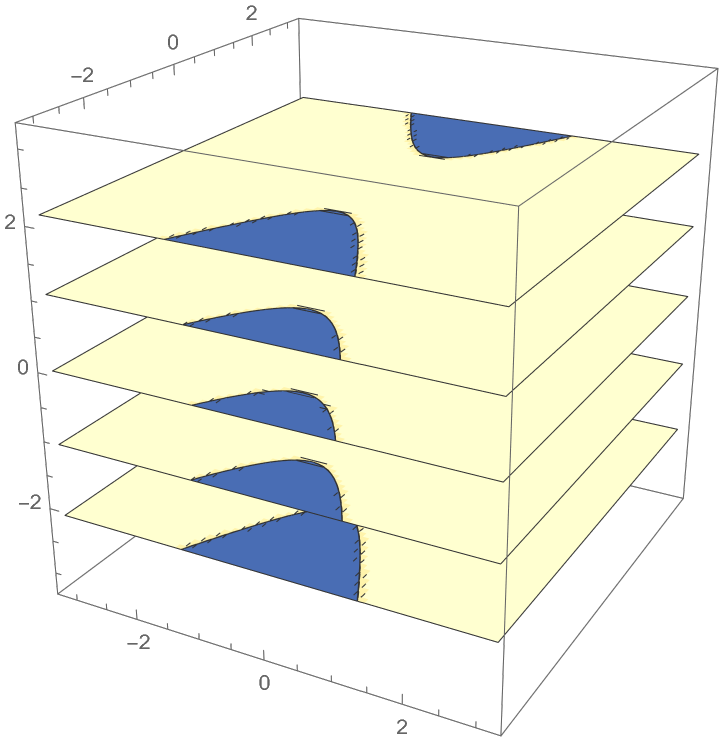}
        \put(15,7){$\frac{\alpha_1}{\sqrt{gh}}$}
        \put(15,100){$\frac{\alpha_2}{\sqrt{gh}}$}
        \put(-8,65){$\frac{\alpha_3}{\sqrt{gh}}$}
        \end{overpic}
        \caption{$N=3$}
        \label{fig:HybridHypRegionN3}
    \end{subfigure}
    \caption{Hyperbolicity regions of MHSWME for $N=2$ (left) depending on scaled $\alpha_1, \alpha_2$ and $N=3$ (right) depending on scaled $\alpha_1, \alpha_2, \alpha_3$. MHSWME shows large hyperbolicity regions that do not decrease in size with increasing $N$.}
    \label{fig:HybridHypRegion}
\end{figure}

As the higher-order moment equations of the MHSWME are the same as those for HSWME, the analytical computation of steady states is equally infeasible, posing a disadvantage of the MHSWME.

\section{Primitive regularization for Hyperbolic SWME (PHSWME)}
\label{sec:new_hyp}
The hyperbolic regularization to obtain the HSWME (as well as the MHSWME) is performed using the convective variables $U_{c}$ \eqref{eq:SWME_c}. It effectively uses vanishing higher-order coefficients $\alpha_i = 0$ for $i \geq 2$ for the evaluation of the convective system matrix.
The idea of that hyperbolic regularization originated from kinetic theory, as a similar regularization also led to a (not very accurate, but hyperbolic) set of moment equations as shown in \cite{Koellermeier2015}. In the same paper, it was also shown that another hyperbolic regularization from \cite{Cai2013b} was the same as simply linearizing a transformed system matrix. Equipped with that knowledge, we apply a similar idea following five steps:
\begin{enumerate}
    \item transform the convective system matrix to primitive variables, see Lemma \ref{lemma:SWME_p}
    \item regularize the primitive system matrix
    \item investigate hyperbolicity of the regularized primitive system matrix
    \item transform the regularized primitive system matrix back to convective variables
    \item conclude that eigenvalues are preserved by the similarity transformation from primitive to convective variables
\end{enumerate}

The first step was already performed in Lemma \ref{lemma:SWME_p}, so that we directly continue with the second step, the regularization.

The next lemma states the regularization of the primitive system matrix, leading to the new so-called Primitive regularization for Hyperbolic Shallow Water Moment Equations (PHSWME).
\begin{lemma}\label{lemma:APHSWME_p}
    Regularizing the primitive system matrix \eqref{eq:A_SWME_p} around linear velocity profiles, i.e., evaluating at $U_{p}=\left(h,u_m,\alpha_1, 0, \ldots, 0\right)$, the Primitive regularization for Hyperbolic Shallow Water Moment Equations (PHSWME) are obtained as
    \begin{equation}\label{eq:PHSWME_p}
        \frac{\partial U_{p}}{\partial t} + A^{\textrm{PHSWME}}_{p} \frac{\partial U_{p}}{\partial x} = 0,
    \end{equation}
    using the primitive PHSWME system matrix $A^{\textrm{PHSWME}}_{p} \in \mathbb{R}^{(N+2)\times(N+2)}$ defined as
    \begin{equation}\label{eq:A_PHSWME_p}
        A^{\textrm{PHSWME}}_{p} =
        \left(\begin{array}{cccccc}
        u_m & h &  &  &  &  \\
        g + \frac{\alpha_1^2}{3 h} & u_m & \frac{2}{3} \alpha_1 &  &  &  \\
         & \alpha_1 & u_m & \frac{3}{5} \alpha_1 & &\\
        -\frac{\alpha_1^2}{3h} &   & \frac{1}{3}\alpha_1 & u_m & \ddots & \\
         &  &  & \ddots & \ddots & \frac{N+1}{2N+1}\alpha_1 \\
         &  &  &  & \frac{N-1}{2N-1}\alpha_1 & u_m
        \end{array}\right).
    \end{equation}
\end{lemma}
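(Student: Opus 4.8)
The plan is to obtain $A^{\textrm{PHSWME}}_{p}$ by substituting $\alpha_i = 0$ for all $i \geq 2$ directly into the primitive SWME system matrix $A^{\textrm{SWME}}_{p}$ from Lemma~\ref{lemma:SWME_p}, and then simplifying entry by entry, grouping the entries by increasing difficulty. The upper-left block and the two coupling rows and columns simplify immediately: the $(2,1)$ entry $g + \frac{1}{h}\sum_{j=1}^{N}\frac{\alpha_j^2}{2j+1}$ collapses to $g + \frac{\alpha_1^2}{3h}$; the second-row entries $\frac{2}{2k+1}\alpha_k$ vanish for $k \geq 2$, leaving only $\frac{2}{3}\alpha_1$ at position $(2,3)$; and the second-column entries $(2+i,2) = \alpha_i$ vanish for $i \geq 2$, leaving $\alpha_1$ at position $(3,2)$. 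These reproduce the corresponding entries of \eqref{eq:A_PHSWME_p}.

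Next I would handle the lower-right block $\mathcal{A}$ from \eqref{eq:A_block}. Because this block is identical in the convective matrix \eqref{eq:A_SWME_c} and the primitive matrix \eqref{eq:A_SWME_p}, setting $\alpha_i=0$ for $i\geq 2$ reduces $\mathcal{A}_{i,l} = \sum_{j}(B_{ilj}+2A_{ijl})\alpha_j + u_m\delta_{i,l}$ to $(B_{il1}+2A_{i1l})\alpha_1 + u_m\delta_{i,l}$, which is exactly the tridiagonal block already present in the HSWME matrix \eqref{eq:A_HSWME_c}, with diagonal $u_m$ and off-diagonals $a_i=\frac{i-1}{2i-1}\alpha_1$ and $c_i=\frac{i+1}{2i+1}\alpha_1$. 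I would therefore invoke the Legendre identities already used in \cite{Koellermeier2020c}, so that this block needs no further work.

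The remaining and genuinely new computation, which I expect to be the main obstacle, is the first column of the moment rows. The entry $(2+i,1) = \frac{1}{h}\sum_{j,k}(B_{ijk}+A_{ijk})\alpha_j\alpha_k$ collapses to $\frac{1}{h}(B_{i11}+A_{i11})\alpha_1^2$, so the task is to show that $B_{i11}+A_{i11}$ equals $0$ for $i=1$, $-\frac{1}{3}$ for $i=2$, and $0$ for all $i\geq 3$, matching the first column $\big(0,-\frac{\alpha_1^2}{3h},0,\ldots\big)$ of \eqref{eq:A_PHSWME_p}. With $\phi_1(\zeta)=1-2\zeta$ and $\int_0^\zeta\phi_1 = \zeta(1-\zeta)$, the factor $\phi_1^2$ entering $A_{i11}$ is a quadratic, while the cubic $\phi_1\int_0^\zeta\phi_1$ entering $B_{i11}$ vanishes at both endpoints $\zeta=0,1$; integrating $B_{i11}$ by parts thus removes the boundary terms and turns it into $-(2i+1)\int_0^1 \phi_i\,\frac{d}{d\zeta}\big(\phi_1\int_0^\zeta\phi_1\big)\,d\zeta$, whose integrand again carries a quadratic factor. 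Orthogonality of the Legendre basis then kills both $A_{i11}$ and $B_{i11}$ for $i\geq 3$, and the two remaining cases $i=1,2$ reduce to explicit quadratures that yield the claimed values. Collecting the simplified entries from all three groups assembles precisely the matrix $A^{\textrm{PHSWME}}_{p}$ of \eqref{eq:A_PHSWME_p}, completing the proof.
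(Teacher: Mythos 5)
Your proposal is correct and follows essentially the same route as the paper: substitute $\alpha_i=0$ for $i\geq2$ into the primitive matrix of Lemma~\ref{lemma:SWME_p}, reduce the first column of the moment rows to $\tfrac{1}{h}(B_{i11}+A_{i11})\alpha_1^2=-\tfrac{\alpha_1^2}{3h}\delta_{i,2}$, and identify the lower-right block with the tridiagonal HSWME block from \cite{Koellermeier2020c}. Your explicit verification that $B_{i11}+A_{i11}$ equals $0,-\tfrac13,0,\ldots$ (via orthogonality and the integration by parts, whose integrand derivative is in fact exactly $\phi_2$) is a welcome expansion of a step the paper only asserts.
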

\begin{proof}
    From applying the regularization $\alpha_i = 0$ for all $i\geq2$, to the primitive system matrix in \eqref{eq:A_SWME_p} we directly get
    \begin{equation}\label{eq:A_lin_p}
        A^{\textrm{PHSWME}}_{p} =
        \left(\begin{array}{cccccc}
        u_m & h &   &   &   &   \\
        g + \frac{\alpha_1^2}{3 h} & u_m & \frac{2}{3} \alpha_1 &   &   &  \\
        \frac{1}{h}\left(B_{111} + A_{111} \right) \alpha_1^2 & \alpha_1 &  & & &\\
        \vdots &  & & \mathcal{A}^{lin} & & \\
        \frac{1}{h}\left(B_{N11} + A_{N11} \right) \alpha_1^2 &  &  & & &
        \end{array}\right).
    \end{equation}
    The entries $2+i=3, \ldots, N$ of the first column can be simplified as $\frac{1}{h}\left(B_{i11} + A_{i11} \right) \alpha_1^2 = -\frac{\alpha_1^2}{3h} \delta_{i,2}$, leaving only an entry for $i=2$, i.e., in the fourth equation.

    The lower right block matrix is given by $\mathcal{A}^{lin}_{i,l} = \left( B_{il1} + 2 A_{i1l} \right) \alpha_1 + u_m \delta_{i,l}$. These entries are the same as in the lower right block of the convective HSWME system matrix $A^{\textrm{HSWME}}_{c}$ derived in \cite{Koellermeier2020c} as $\mathcal{A}^{lin} = u_m \cdot I_N + A_2$ with the matrix $A_2 \in \mathbb{R}^{N \times N}$ defined also in \eqref{eq:A_2}. The derivation of the elements in $A_2$ is given in \cite{Koellermeier2020c}, Appendix A. This completes the proof.
\end{proof}

Using the explicit definition of the system matrix in primitive variables after the regularization, we can prove hyperbolicity of the PHSWME by computing eigenvalues via the characteristic polynomial. Note that there are two possibilities: (1) compute the eigenvalues of $A^{\textrm{PHSWME}}_{p}$ directly, or (2) compute the eigenvalues of the matrix transformed back to convective variables. In this paper, we will do the former because the matrix $A^{\textrm{PHSWME}}_{p}$ is more sparse than its convective counterpart, as we will later see.

Using Lemma \ref{lemma:A_2_chi}, the characteristic polynomial of the matrix $A^{\textrm{PHSWME}}_{p}$ in \eqref{eq:A_PHSWME_p} can be computed.

\begin{theorem}\label{th:APHSWME_p_chi}
    The matrix $A^{\textrm{PHSWME}}_{p} \in \mathbb{R}^{(N+2)\times(N+2)}$ defined in \eqref{eq:A_PHSWME_p} has the following characteristic polynomial
    \begin{equation}\label{eq:APHSWME_p_char_poly}
        \chi_{A^{\textrm{PHSWME}}_{p}}(\lambda) = \frac{(-\alpha_1)^NN!}{(2N+1)!!} \cdot P^{'}_{N+1}\left(\frac{\lambda -u_m}{\alpha_1}\right) \cdot \left( \left( \lambda - u_m \right)^2 - gh - \alpha_1^2 \right),
    \end{equation}
    and the real and pairwise distinct eigenvalues $\lambda_1, \ldots, \lambda_{N+2}$ of $A^{\textrm{PHSWME}}_{p}$ are given by
    \begin{eqnarray}\label{eq:APHSWME_p_EV}
        \lambda_i, i=1,\ldots, N: P'_{N+1}\left(\frac{\lambda_i - u_m}{\alpha_1}\right)=0,
        \lambda_{N+1,N+2} = u_m \pm \sqrt{gh + \alpha_1^2}.
    \end{eqnarray}
\end{theorem}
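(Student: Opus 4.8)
The plan is to evaluate $\chi_{A^{\textrm{PHSWME}}_{p}}(\lambda)=\det\!\bigl(A^{\textrm{PHSWME}}_{p}-\lambda I_{N+2}\bigr)$ directly, exploiting that the matrix \eqref{eq:A_PHSWME_p} is tridiagonal except for the single off-band entry $-\tfrac{\alpha_1^2}{3h}$ in position $(4,1)$. As in the HSWME and MHSWME proofs, I first shift the diagonal by writing $\widetilde{\lambda}=\lambda-u_m$, so that $A^{\textrm{PHSWME}}_{p}-\lambda I_{N+2}$ carries $-\widetilde{\lambda}$ on its diagonal and its lower-right $N\times N$ block equals $A_2-\widetilde{\lambda}I_N$ with $A_2$ from \eqref{eq:A_2}. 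I then work with the tridiagonal subdeterminants $|A_i|$ of \eqref{subdeterminant_A}, using the three-term recursion $|A_2|=-\widetilde{\lambda}\,|A_3|-a_2c_2\,|A_4|$ and recalling from Lemma \ref{lemma:A_2_chi} that $|A_2|=\det(A_2-\widetilde{\lambda}I_N)=\tfrac{(-\alpha_1)^N N!}{(2N+1)!!}P'_{N+1}(\widetilde{\lambda}/\alpha_1)$.

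The core step is a cofactor expansion along the first column, whose only nonzero entries occur in rows $1$, $2$ and $4$. The resulting three minors are computed separately: the minor deleting row $1$ is itself tridiagonal and expands to $-\widetilde{\lambda}\,|A_2|-\tfrac23\alpha_1^2\,|A_3|$; the minor deleting row $2$ collapses to $h\,|A_2|$; and the minor deleting row $4$, obtained by chasing a chain of rows each with a single nonzero entry, evaluates to $2h\,a_2c_2\,|A_4|$. Collecting the three contributions with their cofactor signs yields
\[
\det\!\bigl(A^{\textrm{PHSWME}}_{p}-\lambda I_{N+2}\bigr)=|A_2|\bigl(\widetilde{\lambda}^2-gh\bigr)-\tfrac13\alpha_1^2\,|A_2|+\tfrac23\alpha_1^2\bigl(\widetilde{\lambda}\,|A_3|+a_2c_2\,|A_4|\bigr).
\]
Substituting the recursion $\widetilde{\lambda}\,|A_3|+a_2c_2\,|A_4|=-|A_2|$ collapses the last group into $-\tfrac23\alpha_1^2|A_2|$ and produces $|A_2|\bigl(\widetilde{\lambda}^2-gh-\alpha_1^2\bigr)$, which is \eqref{eq:APHSWME_p_char_poly} after inserting the closed form of $|A_2|$ from Lemma \ref{lemma:A_2_chi}. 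I expect the bookkeeping of the $(4,1)$-minor to be the main obstacle: it is the only contribution carrying the off-band entry, and the whole cancellation hinges on the exact identity $a_2=\tfrac13\alpha_1$, which ties the regularized entry $-\tfrac{\alpha_1^2}{3h}$ precisely to the tridiagonal coupling $a_2c_2$ so that the $|A_3|$ and $|A_4|$ terms fold back into a multiple of $|A_2|$.

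Reality and distinctness then follow from the factored polynomial. The $N$ roots $\lambda_i=u_m+\alpha_1 x_i$ come from the zeros $x_i$ of $P'_{N+1}$, the critical points of the Legendre polynomial $P_{N+1}$, which are real, simple, and contained in $(-1,1)$; equivalently, $A_2$ is similar to a symmetric Jacobi matrix with strictly positive off-diagonal products $a_ic_i=\tfrac{(i-1)(i+1)}{(2i-1)(2i+1)}\alpha_1^2>0$. The remaining pair $\lambda_{N+1,N+2}=u_m\pm\sqrt{gh+\alpha_1^2}$ is real since $gh+\alpha_1^2>0$, mutually distinct, and separated from the $\lambda_i$ because $|\lambda_{N+1,N+2}-u_m|=\sqrt{gh+\alpha_1^2}>|\alpha_1|>|\alpha_1 x_i|=|\lambda_i-u_m|$. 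Hence all $N+2$ eigenvalues are real and pairwise distinct.

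As an independent cross-check, the result can be obtained with no determinant computation at all. Since $A^{\textrm{SWME}}_{c}(U_{p})=\tfrac{\partial T}{\partial U_{p}}\,A^{\textrm{SWME}}_{p}(U_{p})\,\bigl(\tfrac{\partial T}{\partial U_{p}}\bigr)^{-1}$ holds identically in $U_{p}$ (Lemma \ref{lemma:SWME_p}), evaluating this relation at the regularization point $U_{p}=(h,u_m,\alpha_1,0,\ldots,0)$ and using that pointwise evaluation respects matrix products gives $A^{\textrm{HSWME}}_{c}=J_0\,A^{\textrm{PHSWME}}_{p}\,J_0^{-1}$ with $J_0=\tfrac{\partial T}{\partial U_{p}}(h,u_m,\alpha_1,0,\ldots,0)$ invertible for $h>0$. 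Thus $A^{\textrm{PHSWME}}_{p}$ and $A^{\textrm{HSWME}}_{c}$ are similar and share the characteristic polynomial \eqref{eq:chi_HSWME_c}, which equals \eqref{eq:APHSWME_p_char_poly} by Lemma \ref{lemma:A_2_chi}.
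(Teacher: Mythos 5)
Your main argument is correct and is essentially the paper's own proof: shift by $u_m$, expand the determinant of $\widetilde{A}^{\textrm{PHSWME}}-\widetilde{\lambda}I_{N+2}$ into the tridiagonal subdeterminants $|A_i|$, use the three-term recursion to fold the $|A_3|$ and $|A_4|$ contributions back into $|A_2|$, and invoke Lemma~\ref{lemma:A_2_chi}. (You expand along the first column in one pass where the paper expands along the first row and then the first column, but the three minors and the final identity $|A_2|\bigl(\widetilde{\lambda}^2-gh-\alpha_1^2\bigr)$ are the same; I checked your signs and the evaluation $2h\,a_2c_2|A_4|$ of the $(4,1)$-minor, and they are right. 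Note your recursion $|A_2|=-\widetilde{\lambda}|A_3|-a_2c_2|A_4|$ is the correct sign; the recursion as printed in the paper carries a sign typo.) Two things you add are genuinely different and worth keeping: the explicit reality/distinctness argument via simplicity of the zeros of $P'_{N+1}$ in $(-1,1)$ and the separation $\sqrt{gh+\alpha_1^2}>|\alpha_1|>|\alpha_1 x_i|$, which the paper asserts but does not prove (it does require $\alpha_1\neq 0$, consistent with the paper's separate remark on the limiting case); and the cross-check that evaluating the conjugation identity of Lemma~\ref{lemma:SWME_p} at $U_p=(h,u_m,\alpha_1,0,\ldots,0)$ exhibits $A^{\textrm{PHSWME}}_{p}$ as similar to $A^{\textrm{HSWME}}_{c}$, so the characteristic polynomial follows from \eqref{eq:chi_HSWME_c} with no determinant computation at all. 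That second route is shorter but leans on the HSWME result from the earlier literature, whereas the determinant computation is self-contained; either suffices.
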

\begin{proof}
    The proof is similar to the proof of Theorem \ref{th:hybrid_convective_system} and Theorem 3.1 from \cite{Koellermeier2020c} and uses Lemma \ref{lemma:A_2_chi}. It follows from direct development of the determinant with respect to the first two rows, with subsequent use of recurrence formulas for the system matrix.

    We start by defining
    \begin{equation}
       A^{\textrm{PHSWME}}_{p} = \widetilde{A}^{{\textrm{PHSWME}}} + u_m I_{N+2}, \quad \widetilde{\lambda} = \lambda - u_m,
    \end{equation}
    such that the characteristic polynomial of $A^{\textrm{PHSWME}}_{p}$ can be computed by
    \begin{eqnarray}
       \chi_{A^{\textrm{PHSWME}}_{p}}(\lambda) &=& \det\left( A^{\textrm{PHSWME}}_{p} - \lambda I_{N+2} \right) = \det\left( \widetilde{A}^{{\textrm{PHSWME}}} - \left(\lambda - u_m \right) I_{N+2} \right) \\
       &=& \det\left( \widetilde{A}^{{\textrm{PHSWME}}} - \widetilde{\lambda} I_{N+2} \right) =: \left| \widetilde{A}^{{\textrm{PHSWME}}} - \widetilde{\lambda} I_{N+2} \right|,
    \end{eqnarray}
    with $\widetilde{A}^{{\textrm{PHSWME}}} =$
    \begin{equation}
        \left(\begin{array}{cccccc}
        - \widetilde{\lambda} & h &  &  &  &  \\
        g + \frac{\alpha_1^2}{3 h} & - \widetilde{\lambda} & \frac{2}{3} \alpha_1 &  &  &  \\
         & \alpha_1 & - \widetilde{\lambda} & \frac{3}{5} \alpha_1 & &\\
        -\frac{\alpha_1^2}{3h} &   & \frac{1}{3}\alpha_1 & - \widetilde{\lambda} & \ddots & \\
         &  &  & \ddots & \ddots & \frac{N+1}{2N+1}\alpha_1 \\
         &  &  &  & \frac{N-1}{2N-1}\alpha_1 & - \widetilde{\lambda}
        \end{array}\right) =
        \left(\begin{array}{cccccc}
        - \widetilde{\lambda} & d_0 &  &  &  &  \\
        d_1 & - \widetilde{\lambda} & d_2 &  &  &  \\
        0 & d_4 & - \widetilde{\lambda} & c_2 & &\\
        d_5 &   & a_2 & - \widetilde{\lambda} & \ddots & \\
         &  &  & \ddots & \ddots & c_N \\
         &  &  &  & a_N & - \widetilde{\lambda}
        \end{array}\right),
    \end{equation}
    where we made the following definitions
    \begin{equation}
       d_0 = h, \quad d_1 = g + \frac{\alpha_1^2}{3h}, \quad d_2 = \frac{2}{3}\alpha_1, \quad d_4 = \alpha_1, \quad d_5 = -\frac{\alpha_1^2}{3h},
    \end{equation}
    and as usual we denote $a_i = \frac{i-1}{2i-1}\alpha_1$ and $c_i = \frac{i+1}{2i+1}\alpha_1$, for $i=2,\ldots, N$.
    Note that this means that the proof will be similar to the proof of Theorem \ref{th:hybrid_convective_system} and Theorem 3.1 from \cite{Koellermeier2020c}, but with different definitions of the entries $d_i$ due to the setting in primitive variables.

    Now we compute $\left| \widetilde{A}^{{\textrm{PHSWME}}} - \widetilde{\lambda} I_{N+2} \right|$ by always developing with respect to the first row.
    \begin{eqnarray*}
        \left| \widetilde{A}^{{\textrm{PHSWME}}} - \widetilde{\lambda} I_{N+2} \right| &=& \left|
        \begin{array}{cccccc}
        -\widetilde{\lambda}  & d_0 &  &  &  & \\
        d_1 & -\widetilde{\lambda} & d_2 &  & & \\
         0 & d_4 & -\widetilde{\lambda} & c_2  &  &   \\
         d_5 &  & a_2 & -\widetilde{\lambda} & \ddots  &   \\
        &  &  & \ddots & \ddots & c_N \\
         &  &  &  & a_N & -\widetilde{\lambda}  \\
        \end{array}
        \right| \\
      &=& (-\widetilde{\lambda})
      \left|
        \begin{array}{ccccc}
         -\widetilde{\lambda} & d_2 &  & & \\
          d_4 & -\widetilde{\lambda} & c_2  &  &   \\
           & a_2 & -\widetilde{\lambda} & \ddots  &   \\
        &  & \ddots & \ddots & c_N \\
         &  &  & a_N & -\widetilde{\lambda}   \\
        \end{array}
        \right|
        -d_0 \left|
        \begin{array}{ccccc}
        d_1 & d_2 &  &  & \\
         0 &  -\widetilde{\lambda} & c_2  &  &   \\
         d_5 & a_2 & -\widetilde{\lambda} & \ddots  &   \\
        &   & \ddots & \ddots & c_N \\
         &   &  & a_N & -\widetilde{\lambda}   \\
        \end{array}
        \right| \\
        &=& (-\widetilde{\lambda}) \left( (-\widetilde{\lambda})  \left|
        \begin{array}{cccc}
           -\widetilde{\lambda} & c_2  &  &   \\
          a_2 & -\widetilde{\lambda} & \ddots  &   \\
       & \ddots & \ddots & c_N \\
         &  & a_N & -\widetilde{\lambda}   \\
        \end{array}
        \right| -d_2  \left|
        \begin{array}{ccccc}
          d_4 & c_2  &  &   \\
           &  -\widetilde{\lambda} & \ddots  &   \\
        & \ddots & \ddots & c_N \\
         &   & a_N & -\widetilde{\lambda}   \\
        \end{array}
        \right| \right) \\
        && - d_0 \left( d_1  \left|
        \begin{array}{cccc}
           -\widetilde{\lambda} & c_2  &  &   \\
          a_2 & -\widetilde{\lambda} & \ddots  &   \\
        & \ddots & \ddots & c_N \\
         &  & a_N & -\widetilde{\lambda}   \\
        \end{array}
        \right| - d_2  \left|
        \begin{array}{cccc}
         0  & c_2  &  &   \\
         d_5  & -\widetilde{\lambda} & \ddots  &   \\
        &  \ddots & \ddots & c_N \\
         &   & a_N & -\widetilde{\lambda}   \\
        \end{array}
        \right|\right) \\
        &=& (-\widetilde{\lambda})  \left( (-\widetilde{\lambda})  \left|A_2\right| - d_2 d_4  \left|A_3\right| \right) - d_0 \left(d_1  \left|A_2\right| - d_2 \left( - c_2 d_5 \left|A_4\right| \right)\right),
    \end{eqnarray*}
    where the $|A_i|$ were already introduced in Theorem \ref{th:hybrid_convective_system} as the subdeterminants     \begin{equation}\label{eq:subdeterminant}
       |A_i| = \left|
        \begin{array}{cccc}
           -\widetilde{\lambda} & c_i  &  &   \\
          a_i & -\widetilde{\lambda} & \ddots  &   \\
       & \ddots & \ddots & c_N \\
         &  & a_N & -\widetilde{\lambda}   \\
        \end{array}
        \right|,
    \end{equation}
    with recursion formula
    \begin{equation*}
        \left|A_i\right| = \frac{1}{a_{i-2} c_{i-2}}\left( \left|A_{i-2}\right| + \widetilde{\lambda} \left|A_{i-1}\right| \right).
    \end{equation*}
    We can then eliminate $\left|A_4\right|$ from the expression and factorize the remaining terms with respect to $\left|A_2\right|$ and $\left|A_3\right|$ as follows
    \begin{eqnarray*}
       \chi_{A^{\textrm{PHSWME}}_{p}}(\lambda) &=& \widetilde{\lambda}^2 |A_2| + d_2 d_4 \widetilde{\lambda} |A_3| - d_0 d_1 |A_2| + d_0 d_2 c_2 d_5 |A_4|\\
       &=& \widetilde{\lambda}^2 |A_2| + d_2 d_4 \widetilde{\lambda} |A_3| - d_0 d_1 |A_2| + \frac{d_0 d_2 d_5}{a_2}\left( \left|A_2\right| + \widetilde{\lambda} \left|A_3\right| \right)\\
        &=& |A_2| \left( \widetilde{\lambda}^2 - d_0 d_1 + \frac{d_0 d_5 d_2}{a_2} \right) + |A_3| \widetilde{\lambda} \left( d_2 d_4 + \frac{d_0 d_5 d_2}{a_2} \right)\\
        &=& \left|A_2\right| \left( \widetilde{\lambda}^2 -g h -\alpha_1^2 \right) + \left|A_3\right| \widetilde{\lambda} \left( 0 \right),
    \end{eqnarray*}
    where we inserted the specific terms for the variables $d_i$ and $a_2$.
    Going back to the standard notation $\widetilde{\lambda} = \lambda - u_m$ and using $|A_2| = \chi_{A_2}(\widetilde{\lambda})$ we finally obtain
    \begin{equation*}
        \chi_{A^{\textrm{PHSWME}}_{p}}(\lambda) = \left( \left(\lambda - u_m\right)^2 -gh -\alpha_1^2 \right) \cdot \chi_{A_2} (\lambda - u_m),
    \end{equation*}
    so that we can directly use the characteristic polynomial $\chi_{A_2}$ of Lemma \ref{lemma:A_2_chi},
    which completes the proof.
\end{proof}

We note that $A^{\textrm{PHSWME}}_{p}$ is also real diagonalizable for the limiting case $\alpha_1 = 0$.
This means that the PHSWME system is indeed hyperbolic.

Note that the eigenvalues only depend on the first moment coefficient $\alpha_1$ and not on the higher-order coefficients.\\

In the fourth step, transform the system matrix back to the convective variables.

\begin{lemma}\label{lemma:APHSWME_c}
    The PHSWME system defined in \eqref{eq:A_PHSWME_p} can be transformed from primitive to convective variables to read
    \begin{equation}\label{eq:PHSWME_c}
        \frac{\partial U_{c}}{\partial t} + A^{\textrm{PHSWME}}_{c} \frac{\partial U_{c}}{\partial x} = 0,
    \end{equation}
    using the convective PHSWME system matrix $A^{\textrm{PHSWME}}_{c} \in \mathbb{R}^{(N+2)\times(N+2)}$ defined as $A^{\textrm{PHSWME}}_{c}=$
    \begin{equation}\label{eq:A_PHSWME_c}
        \left(\begin{array}{cccccccc}
          & 1 &  &  &  &  &  &  \\
        -u_m^2 + gh - \frac{\alpha_1^2}{3} & 2u_m & \frac{2}{3} \alpha_1 &  &  &  &  &  \\
        -2u_m \alpha_1 - \frac{3}{5}\alpha_1\alpha_2 & 2\alpha_1 & u_m & \frac{3}{5} \alpha_1 & &&  &  \\
        -u_m \alpha_2 - \frac{4}{7}\alpha_1\alpha_3 - \frac{2}{3}\alpha_1^2 & \alpha_2 & \frac{1}{3}\alpha_1 & u_m & \ddots & &  &  \\
        \vdots & \vdots &  & \ddots & \ddots & \frac{i+1}{2i+1}\alpha_1 &  &   \\
        -u_m \alpha_i- \frac{i-1}{2i-1}\alpha_1\alpha_{i-1} - \frac{i+1}{2i+1}\alpha_1 \alpha_{i+1}&  \alpha_i &  &  & \frac{i-1}{2i-1}\alpha_1 & u_m & \ddots &  \\
        \vdots & \vdots &  &  &  & \ddots & \ddots &  \frac{N+1}{2N+1}\alpha_1 \\
        -u_m \alpha_N - \frac{N-1}{2N-1} \alpha_1 \alpha_N & \alpha_N &  &  &  &  &  \frac{N-1}{2N-1}\alpha_1 & u_m
        \end{array}\right).
    \end{equation}
\end{lemma}
\begin{proof}
    We start from the primitive system matrix $A^{\textrm{PHSWME}}_{p}$ \eqref{eq:A_PHSWME_p} and compute the convective system matrix using the variable transformation $A^{\textrm{PHSWME}}_{c} = \frac{\partial  T\left(U_{p}\right)}{\partial U_{p}} A^{\textrm{PHSWME}}_{p} \frac{\partial T\left(U_{p}\right)}{\partial U_{p}}^{-1}$ with the transformation matrices given in \eqref{eq:dervars} and \eqref{eq:inv_dervars}. The first step reads
    \begin{equation}
        \frac{\partial  T\left(U_{p}\right)}{\partial U_{p}} A^{\textrm{PHSWME}}_{p} =
        h\left(\begin{array}{cccccccc}
        \frac{u_m}{h} & 1 &  &  &  & &  & \\
        \frac{u_m^2}{h} + g + \frac{\alpha_1^2}{3 h} & 2u_m & \frac{2}{3} \alpha_1 &  &  & &  & \\
        \frac{u_m \alpha_1}{h} & 2\alpha_1 & u_m & \frac{3}{5} \alpha_1 & &&  &\\
        \frac{u_m \alpha_2}{h}-\frac{\alpha_1^2}{3h} & \alpha_2  & \frac{1}{3}\alpha_1 & u_m & \ddots & &  &\\
        \vdots & \vdots &  & \ddots & \ddots & \frac{i+1}{2i+1}\alpha_1 &  &\\
        \frac{u_m \alpha_i}{h} & \alpha_i &  &  & \frac{i-1}{2i-1}\alpha_1 & u_m & \ddots & \\
        \vdots & \vdots &  &  &  & \ddots & \ddots & \frac{N+1}{2N+1}\alpha_1\\
        \frac{u_m \alpha_N}{h} & \alpha_N &  &  & &  &\frac{N-1}{2N-1}\alpha_1 & u_m
        \end{array}\right),
    \end{equation}
    so that we can directly compute in a second step that we have indeed $A^{\textrm{PHSWME}}_{c} = \left( \frac{\partial  T\left(U_{p}\right)}{\partial U_{p}} A^{\textrm{PHSWME}}_{p} \right) \frac{\partial T\left(U_{p}\right)}{\partial U_{p}}^{-1} = $
    \begin{equation}
        \left(\begin{array}{cccccccc}
          & 1 &  &  &  &  &  &  \\
        -u_m^2 + gh - \frac{\alpha_1^2}{3} & 2u_m & \frac{2}{3} \alpha_1 &  &  &  &  &  \\
        -2u_m \alpha_1 - \frac{3}{5}\alpha_1\alpha_2 & 2\alpha_1 & u_m & \frac{3}{5} \alpha_1 & &&  &  \\
        -u_m \alpha_2 - \frac{4}{7}\alpha_1\alpha_3 - \frac{2}{3}\alpha_1^2 & \alpha_2 & \frac{1}{3}\alpha_1 & u_m & \ddots & &  &  \\
        \vdots & \vdots &  & \ddots & \ddots & \frac{i+1}{2i+1}\alpha_1 &  &   \\
        -u_m \alpha_i- \frac{i-1}{2i-1}\alpha_1\alpha_{i-1} - \frac{i+1}{2i+1}\alpha_1 \alpha_{i+1}&  \alpha_i &  &  & \frac{i-1}{2i-1}\alpha_1 & u_m & \ddots &  \\
        \vdots & \vdots &  &  &  & \ddots & \ddots &  \frac{N+1}{2N+1}\alpha_1 \\
        -u_m \alpha_N - \frac{N-1}{2N-1} \alpha_1 \alpha_N & \alpha_N &  &  &  &  &  \frac{N-1}{2N-1}\alpha_1 & u_m
        \end{array}\right).
    \end{equation}
\end{proof}

While the regularization in primitive variables sets $\alpha_i= 0$ for $i\geq2$ in $A^{\textrm{PHSWME}}_{p}$, the transformation back to convective variables makes the coefficients $\alpha_i$ reappear in the convective system matrix $A^{\textrm{PHSWME}}_{c}$, see \eqref{eq:A_PHSWME_c}.

We note that the HSWME system matrix $A^{\textrm{HSWME}}_{c}$ in \eqref{eq:A_HSWME_c} from \cite{Koellermeier2020c} is recovered from \eqref{eq:PHSWME_c} when again setting $\alpha_i= 0$ for $i\geq2$ in $A^{\textrm{PHSWME}}_{c}$.
We observe that the differences between $A^{\textrm{PHSWME}}_{c}$ and $A^{\textrm{HSWME}}_{c}$ are in the first two columns, marking the crucial dependence of the moment equations for $h \alpha_i$, $i\geq2$ on the derivatives of the macroscopic water height $h$ and the water discharge $h u_m$.

We can conclude that the eigenvalues of the convective PHSWME system matrix \eqref{eq:A_PHSWME_c} are the same as the eigenvalues of the primitive PHSWME system matrix \eqref{eq:A_PHSWME_p}, due to the similarity transformation between the different systems. The global hyperbolicity of the convective system \eqref{eq:PHSWME_c} is thus proven by Theorem \ref{th:APHSWME_p_chi}.\\

Besides hyperbolicity, the new PHSWME system also allows for an analytical identification of steady states. This topic was first discussed in \cite{Koellermeier2020i}, where it was shown that the original HSWME system does not allow for analytical identification of steady states for $N\geq2$. The results for the new primitive regularization are given in the following theorem.

\begin{theorem}\label{th:APHSWME_steady_states}
    The PHSWME system defined in \eqref{eq:A_PHSWME_p} allows for the following steady states
    \begin{equation}\label{eq:APHSWME_steady_states}
        h=h_0 \quad \textrm{or} \quad -\Fr^2 + \frac{1}{2} \left(\left(\frac{h}{h_0}\right)^2+\left(\frac{h}{h_0}\right)\right) + \frac{\Fr^2 \Ma_1^2}{3}  \left( \left(\frac{h}{h_0}\right)^3+\left(\frac{h}{h_0}\right)^2+\left(\frac{h}{h_0}\right)\right)=0,
    \end{equation}
    where $\Fr= \frac{u_{m,0}}{\sqrt{gh_0}}$ denotes the dimensionless Froude number at a reference state and $\Ma_1 = \frac{\alpha_{1,0}}{u_{m,0}}$ denotes the dimensionless first moment number at the reference state.
\end{theorem}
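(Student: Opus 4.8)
The plan is to obtain the steady states as first integrals of the stationary system $A^{\textrm{PHSWME}}_{p}\,\partial_x U_{p}=0$, obtained by setting $\partial_t U_{p}=0$ in \eqref{eq:PHSWME_p}, mirroring the SWLME derivation of \cite{Koellermeier2020i} but specialised to the single moment $\alpha_1$ that survives in the PHSWME momentum flux. First I would read off the first row of $A^{\textrm{PHSWME}}_{p}$ in \eqref{eq:A_PHSWME_p}, which gives $u_m\partial_x h + h\partial_x u_m = \partial_x(hu_m)=0$, hence the conserved discharge $hu_m = h_0 u_{m,0}$.

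Next I would identify the conserved momentum flux. Because the regularisation only affects the momentum row through its $\alpha_1$-terms, the second PHSWME equation is still in conservation form: reading the second row of the convective matrix $A^{\textrm{PHSWME}}_{c}$ in \eqref{eq:A_PHSWME_c} shows that the corresponding flux is $F_2 = h u_m^2 + \tfrac12 g h^2 + \tfrac13 h\alpha_1^2$, with the higher moments $\alpha_i$, $i\ge 2$, eliminated from it. This is precisely the mechanism by which only $\Ma_1$ enters the final relation, in contrast to the SWLME flux in \eqref{eq:SWLME_steady_states}. At steady state this gives $F_2 = \text{const}$.

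The remaining ingredient is the profile of $\alpha_1$. Restricting to the branch $\alpha_i=0$ for $i\ge2$ (the natural closure consistent with the linear-profile regularisation), the third row of \eqref{eq:A_PHSWME_p} reduces to $\alpha_1\partial_x u_m+u_m\partial_x\alpha_1=\partial_x(\alpha_1 u_m)=0$, so $\alpha_1 u_m=\alpha_{1,0}u_{m,0}$; dividing by the discharge relation yields $\alpha_1/h=\alpha_{1,0}/h_0$, i.e. $\alpha_1=\alpha_{1,0}\,h/h_0$. I would then verify that this profile also satisfies the fourth row (the $\alpha_2$-equation, whose $-\tfrac{\alpha_1^2}{3h}$ coupling to $h$ is exactly cancelled since $\partial_x\alpha_1=\tfrac{\alpha_1}{h}\partial_x h$) and that rows $5$ through $N+2$ hold identically on this branch. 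I expect this consistency check across the tridiagonal moment block to be the main bookkeeping obstacle, although it is routine given the sparse structure of \eqref{eq:A_PHSWME_p}.

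Finally I would substitute $u_m = h_0 u_{m,0}/h$ and $\alpha_1 = \alpha_{1,0}\,h/h_0$ into $F_2(h)=F_2(h_0)$, set $r=h/h_0$, and nondimensionalise by $gh_0^2$ using $\Fr^2 = u_{m,0}^2/(gh_0)$ and $\Ma_1^2\Fr^2 = \alpha_{1,0}^2/(gh_0)$. Multiplying through by $r$ produces $\Fr^2(1-r)+\tfrac12(r^3-r)+\tfrac{\Ma_1^2\Fr^2}{3}(r^4-r)=0$; using $r^3-r=r(r-1)(r+1)$ and $r^4-r=r(r-1)(r^2+r+1)$ to factor out $(r-1)$ then splits the solution set into the branch $r=1$ (that is, $h=h_0$) and the cubic relation \eqref{eq:APHSWME_steady_states}, which completes the proof. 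The $(r-1)$ factorisation is exactly the structure of the classical shallow water hydraulic jump, here enriched by the single moment contribution $\tfrac{\Fr^2\Ma_1^2}{3}(r^3+r^2+r)$.
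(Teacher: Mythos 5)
Your proposal is correct and reaches the stated relation through the same three first integrals the paper uses: $\partial_x(hu_m)=0$, conservation of the momentum flux $hu_m^2+\tfrac12 gh^2+\tfrac13 h\alpha_1^2$, and $\alpha_1/h=\mathrm{const}$ with constant higher moments; the final substitution and the $(r-1)$ factorization are identical to the paper's (which it delegates to the corresponding SWLME computation in \cite{Koellermeier2020i}). The one structural difference is in the moment block: you restrict a priori to the branch $\alpha_i=0$ for $i\ge2$ and verify rows $3$ through $N+2$ one by one, so you exhibit a family of steady states (sufficiency), whereas the paper assembles the last $N$ stationary equations into the linear system $(A_2+u_m I_N)\,\mathrm{diag}(h^2,h,\dots,h)\,\bigl(\partial_x(\alpha_1/h),\partial_x\alpha_2,\dots,\partial_x\alpha_N\bigr)^T=0$ and invokes non-singularity of $A_2+u_m I_N$ (for $h\neq 0$ and a non-trivial velocity profile) to conclude $\partial_x(\alpha_1/h)=0$ and $\partial_x\alpha_i=0$ as \emph{necessary} conditions, with the higher moments allowed to be arbitrary constants rather than zero. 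Since the higher moments do not enter the PHSWME momentum flux, both routes land on the same relation \eqref{eq:APHSWME_steady_states}; the paper's version additionally characterizes all smooth steady states, while yours suffices for the existence claim as literally stated, and your row-by-row cancellation of the $-\tfrac{\alpha_1^2}{3h}\partial_x h$ term against $\tfrac13\alpha_1\partial_x\alpha_1$ is exactly the mechanism hidden in the paper's matrix formulation. Your identification of the conserved momentum flux as $\tfrac13 h\alpha_1^2$ (rather than $\tfrac13\alpha_1^2$ as displayed in the paper's intermediate steps) is the dimensionally consistent form needed to produce the $r^3+r^2+r$ term, so that detail is handled correctly on your side.
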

\begin{proof}
    For the steady states, we start from the PHSWME system written in convective variables \eqref{eq:PHSWME_c}. From there on, we separately discuss three parts:

    \begin{itemize}
    \item[$(1)$] We directly see that the first equation leads to
    \begin{equation}
        \partial_x\left( h u_m \right) = 0
    \end{equation}
    \item[$(2)$] The second equation in steady state reads
    \begin{align}
        &&\quad\left(gh - u_m^2 - \frac{1}{3}\alpha_1^2\right)\partial_x h + 2 u_m \partial_x\left( h u_m \right) + \frac{2}{3} \alpha_1 \partial_x\left( h \alpha_1 \right) &= 0 \\
        \Rightarrow\quad && \partial_x \left( \frac{1}{2}gh^2 \right) - u_m^2 \frac{h u_m}{h u_m} \partial_x h - \frac{1}{3} \alpha_1^2 \partial_x h + \frac{2}{3} \alpha_1^2 \partial_x h + \frac{2}{3} \alpha_1 h \partial_x \alpha_1 &= 0 \\
        \Rightarrow\quad && \partial_x \left( \frac{1}{2}gh^2 \right) - u_m^3 h \partial_x \left( \frac{1}{u_m} \right) + \frac{1}{3} \alpha_1^2 \partial_x h + \frac{1}{3} h \partial_x \alpha_1 &= 0 \\
        \Rightarrow\quad && \partial_x \left( \frac{1}{2}gh^2 \right) + \partial_x \left( h u_m^2 \right) + \partial_x \left( \frac{1}{3} \alpha_1^2 \right) &= 0 \\
        \Rightarrow\quad && \partial_x \left( \frac{1}{2}gh^2 + h u_m^2 + \frac{1}{3} \alpha_1^2 \right) &= 0
    \end{align}
    \item[$(2+i)$] For equation $2+i$, with $i=1,\ldots, N$, we have in steady state that
    \begin{eqnarray}
        \left(-u_m \alpha_i - \frac{2}{3} \alpha_1 \delta_{i,2}- \frac{i-1}{2i-1}\alpha_1 \alpha_i - \frac{i+1}{2i+1} \alpha_i \alpha_{i+1} \right)\partial_x h + \alpha_i \partial_x\left( h u_m \right) \nonumber \\
        + \frac{i-1}{2i-1} \alpha_1 \partial_x\left( h \alpha_{i-1} \right) + u_m \partial_x\left( h \alpha_{i} \right) + \frac{i+1}{2i+1} \alpha_1 \partial_x\left( h \alpha_{i+1} \right)= 0
    \end{eqnarray}
    From these equations, we can derive the following set of equations
    \begin{equation}
        \left(\begin{array}{cccc}
        u_m & \frac{3}{5}\alpha_1 & &  \\
        \frac{1}{3}\alpha_1 & u_m & \ddots &  \\
         & \ddots & \ddots & \frac{N+1}{2N+1}\alpha_1 \\
         &  & \frac{N-1}{2N-1}\alpha_1 & u_m \\
         \end{array}\right) \cdot \left(\begin{array}{cccc}
        h^2 &  & &  \\
         & h &  &  \\
         & & \ddots &  \\
         &  &  & h \\
         \end{array}\right) \left(\begin{array}{c}
         \partial_x \left( \frac{\alpha_1}{h} \right) \\
         \partial_x \left( \alpha_2\right) \\
         \vdots \\
         \partial_x \left( \alpha_N\right) \\
         \end{array}\right) = \left(\begin{array}{c}
         0 \\
         \vdots \\
         0 \\
         \end{array}\right)
    \end{equation}
    We recognize that the left hand side includes precisely the matrix $A_2 + u_m \cdot I_N$ from \eqref{eq:A_2}. For $h\neq 0$ and non-zero velocity profile (i.e. either $u_m \neq 0$ or $\alpha_1 \neq 0$), the left hand side matrix is non-singular. This means that the only non-trivial solution for steady states $h \neq 0$ is given by the kernel of $A_2 + u_m \cdot I_N$, i.e.,
    \begin{equation}
        \partial_x \left( \frac{\alpha_1}{h} \right) = 0 \quad \textrm{and} \quad \partial_x  \alpha_i = 0,  \textrm{ for } i=2,\ldots, N.
    \end{equation}
    \end{itemize}
    Together, we then have the following conditions in steady state
    \begin{eqnarray}
        \partial_x\left( h u_m \right) = 0 \Longrightarrow &h u_m  = \textrm{const} \\
        \partial_x \left( \frac{1}{2}gh^2 + h u_m^2 + \frac{1}{3} \alpha_1^2 \right) = 0 \Longrightarrow &\frac{1}{2}gh^2 + h u_m^2 + \frac{1}{3} \alpha_1^2 = \textrm{const} \\
        \partial_x \left( \frac{\alpha_1}{h} \right) = 0 \Longrightarrow & \frac{\alpha_1}{h} = \textrm{const}\\
        \partial_x  \alpha_i = 0 \Longrightarrow & \quad \alpha_i = \textrm{const}, \textrm{ for } i=2,\ldots, N,
    \end{eqnarray}
    and according to the same derivations as in \cite{Koellermeier2020i} Equations (4.29)-(4.34), we obtain the result of this theorem.
\end{proof}
As usual, the Froude number distinguishes sub-critical and super-critical flow based on $\Fr<1$ and $\Fr>1$, respectively. The first moment number $\Ma_1$ was already introduced in \cite{Koellermeier2020i} and quantifies deviations from constant velocity, with typically $\Ma_1 < 1$.

We note that (same as the eigenvalues) the steady states of the PHSWME depend only on the first moment $\alpha_1$.

\begin{remark}
    We note that it is not clear a-priori that any regularization is effective at obtaining a PDE system with the desired properties. This is also the case in the earlier hyperbolic regularizations in kinetic theory, see \cite{Cai2013b,Koellermeier2015}. In fact, for the rarefied gas dynamics models in \cite{Cai2013b}, the regularization was using convective variables as a regularization in primitive variables would have resulted in more changes with respect to the original model.
    In our case of shallow flows, the regularization in primitive variables is equally intuitive (or non-intuitive) as the regularization in convective variables as the full SWME model does already contain non-conservative products from the start. This is different from the models in kinetic theory in \cite{Cai2013b}.

    In future work it would be interesting to investigate whether any of the current regularizations could be linked to a regularization or closure of the velocity profile itself. Some inspiration might be taken from the related moment models in gas dynamics \cite{McDonald2013,Schaerer2015} where a closure relation determines the dependencies of the moments based on some underlying assumptions.
\end{remark}

\section{Primitive Moment regularization for Hyperbolic SWME (PMHSWME)}
\label{sec:new_new_hyp}
The previous section showed that PHSWME, a model that regularizes the primitive system matrix, succeeds at both restoring hyperbolicity as well as having analytical steady states.
However, both the eigenvalues as well as the steady states of the PHSWME depend only on the first coefficient $\alpha_1$ and not on the higher-order coefficients $\alpha_i$ for $i\geq2$, which can be seen as a simplification of physics.

We remember the SWLME from \eqref{eq:A_SWLME_c} and \cite{Koellermeier2020i}, which neglects most nonlinear effects in the transport part of the higher-order moment equations while keeping the mass and momentum equation for $h$ and $h u_m$ unchanged. Despite the resulting simplification of the system matrix, this resulted in physically interpretable, analytical steady states \cite{Koellermeier2020i}, depending on all involved higher-order moments. The same was the case for the eigenvalues, which then depend also on the higher-order coefficients, see \eqref{eq:SWLME_c_EV}.

We already saw with the MHSWME in theorem \ref{th:hybrid_convective_system} that keeping the momentum equation unchanged and regularizing only the moment equations in convective variables only leads to a locally hyperbolic system with infeasible steady states.

We now combine the ideas of PHSWME and MHSWME and derive a new regularized system matrix overcoming both problems. As in the case of PHSWME, we start from the formulation in primitive variables, see \eqref{eq:A_SWME_p}. This time, we keep the first two equations unchanged (like in the SWLME and MHSWME) and only regularize the remaining moment equations $2+i$, for $i=1, \ldots, N$ around linear velocity profiles. This means that all coefficients $\alpha_i$ with $i\geq2$ are set to zero in the last $N$ equations only. We thus denote the resulting system as the Primitive Moment regularization for Hyperbolic SWME (PMHSWME), with the explicit equations given by the next theorem.

\begin{theorem}\label{th:APMHSWME_p}
    Regularizing only the last $N$ rows of the primitive system matrix \eqref{eq:A_SWME_p} around linear velocity profiles, i.e., evaluating at $U_{p}=\left(h,u_m,\alpha_1, 0, \ldots, 0\right)$, the Primitive Moment regularization for Hyperbolic Shallow Water Moment Equations (PHSWME) is obtained as
    \begin{equation}\label{eq:PMHSWME_p}
        \frac{\partial U_{p}}{\partial t} + A^{\textrm{PMHSWME}}_{p} \frac{\partial U_{p}}{\partial x} = 0,
    \end{equation}
    using the primitive system matrix $A^{\textrm{PMHSWME}}_{p} \in \mathbb{R}^{(N+2)\times(N+2)}$ defined as
    \begin{equation}\label{eq:A_PMHSWME_p}
        A^{\textrm{PMHSWME}}_{p} =
        \left(\begin{array}{cccccc}
        u_m & h &  &  &  &  \\
        g + \frac{1}{h} \sum\limits_{i=1}^N \frac{\alpha_i^2}{2i+1} & u_m & \frac{2}{3} \alpha_1 & \frac{2}{5} \alpha_2 & \dots & \frac{2}{2N+1} \alpha_N \\
         & \alpha_1 & u_m & \frac{3}{5} \alpha_1 & &\\
        -\frac{\alpha_1^2}{3h} &   & \frac{1}{3}\alpha_1 & u_m & \ddots & \\
         &  &  & \ddots & \ddots & \frac{N+1}{2N+1}\alpha_1 \\
         &  &  &  & \frac{N-1}{2N-1}\alpha_1 & u_m
        \end{array}\right).
    \end{equation}
\end{theorem}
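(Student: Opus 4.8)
The plan is to exploit the block structure of the regularization: the PMHSWME matrix is obtained from the full primitive SWME matrix $A^{\textrm{SWME}}_{p}$ of \eqref{eq:A_SWME_p} by retaining the first two rows (the mass and momentum equations) verbatim, while applying the regularization $\alpha_i = 0$ for $i\geq 2$ only to the last $N$ rows. I would therefore verify the claim by splitting it into these two independent pieces and treating them separately.

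For the first two rows no computation is required: they are copied directly from the primitive SWME matrix established in Lemma \ref{lemma:SWME_p}. The first row remains $(u_m, h, 0, \ldots, 0)$, and the second row keeps the full pressure-type term $g + \frac{1}{h}\sum_{i=1}^N \frac{\alpha_i^2}{2i+1}$ in the first column together with the complete set of coupling entries $\frac{2}{2l+1}\alpha_l$ for $l=1,\ldots,N$. This retention of the unregularized momentum equation is precisely what distinguishes PMHSWME from PHSWME, where that row was also regularized, and it mirrors the SWLME and MHSWME philosophy of leaving the momentum balance untouched.

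For the last $N$ rows the calculation is identical to the one already carried out in the proof of Lemma \ref{lemma:APHSWME_p}, since the regularization applied there and here to these rows is the same. Concretely, I would reuse three facts: first, the first-column entry $\frac{1}{h}(B_{i11}+A_{i11})\alpha_1^2$ simplifies to $-\frac{\alpha_1^2}{3h}\delta_{i,2}$, so it survives only in the fourth row; second, the second-column entry $\alpha_i$ collapses to the vector $(\alpha_1, 0, \ldots, 0)^{\top}$; and third, the lower-right block $\mathcal{A}$ from \eqref{eq:A_block} reduces to $\mathcal{A}^{lin} = u_m I_N + A_2$ with the tridiagonal matrix $A_2$ from \eqref{eq:A_2}. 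Assembling the unchanged upper block and the regularized lower block then yields exactly the matrix in \eqref{eq:A_PMHSWME_p}.

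I do not expect a genuine obstacle here, as the theorem is essentially a bookkeeping statement combining the unchanged momentum row with the regularized moment rows of PHSWME. The only mildly delicate point is the simplification $\frac{1}{h}(B_{i11}+A_{i11})\alpha_1^2 = -\frac{\alpha_1^2}{3h}\delta_{i,2}$, which relies on the explicit values of the Legendre integral coefficients $A_{ijk}$ and $B_{ijk}$; but this identity is already established in the references underlying Lemma \ref{lemma:APHSWME_p} and can simply be invoked rather than recomputed.
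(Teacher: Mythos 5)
Your proposal is correct and follows essentially the same route as the paper, whose proof simply invokes Lemma \ref{lemma:SWME_p} for the unchanged first two rows and Lemma \ref{lemma:APHSWME_p} for the regularized last $N$ rows; you merely spell out the bookkeeping (the $-\frac{\alpha_1^2}{3h}\delta_{i,2}$ first-column entry, the collapsed second column, and the block $\mathcal{A}^{lin}=u_m I_N+A_2$) in more detail than the paper does.
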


\begin{proof}
    The proof is trivial and follows from Lemma \ref{lemma:SWME_p} and Lemma \ref{lemma:APHSWME_p} by not modifying the second equation.
\end{proof}
We note again that by construction only the second equation differs when comparing the two regularized models PHSWME \eqref{eq:PHSWME_p} and PMHSWME \eqref{eq:PMHSWME_p} in primitive variables.

To show hyperbolicity, we use the PMHSWME in primitive variables. Similar to the PHSWME, the following theorem computes the characteristic polynomial and  eigenvalues of the PMHSWME system matrix $A^{\textrm{PMHSWME}}_{p}$ from \eqref{eq:A_PMHSWME_p}.

\begin{theorem}\label{th:APMHSWME_p_chi}
    The matrix $A^{\textrm{PMHSWME}}_{p} \in \mathbb{R}^{(N+2)\times(N+2)}$ defined in Lemma \ref{th:APMHSWME_p} has the following characteristic polynomial
    \begin{equation}\label{eq:APMHSWME_p_char_poly}
        \chi_{A^{\textrm{PMHSWME}}_{p}}(\lambda) = \frac{(-\alpha_1)^NN!}{(2N+1)!!} \cdot P^{'}_{N+1}\left(\frac{\lambda-u_m}{\alpha_1}\right) \cdot \left( \left( \lambda - u_m \right)^2 - gh - \alpha_1^2 - \sum\limits_{i=2}^N\frac{\alpha_i^2}{2i+1}\right),
    \end{equation}
    and the real and pairwise distinct eigenvalues $\lambda_1, \ldots, \lambda_{N+2}$ of $A^{\textrm{PMHSWME}}_{p}$ are given by
    \begin{eqnarray}\label{eq:APMHSWME_p_EV}
        \lambda_i, i=1,\ldots, N: P'_{N+1}\left(\frac{\lambda_i - u_m}{\alpha_1}\right)=0,
        \lambda_{N+1,N+2} = u_m \pm \sqrt{gh + \alpha_1^2 + \sum\limits_{i=2}^N\frac{\alpha_i^2}{2N+1}}.
    \end{eqnarray}
\end{theorem}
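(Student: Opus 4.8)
The plan is to treat $A^{\textrm{PMHSWME}}_{p}$ as a one-row perturbation of $A^{\textrm{PHSWME}}_{p}$: comparing \eqref{eq:A_PMHSWME_p} with \eqref{eq:A_PHSWME_p}, the two primitive matrices differ only in the second (momentum) row, so the new characteristic polynomial can be read off from the one already established in Theorem \ref{th:APHSWME_p_chi} plus a correction. First I would shift by $\widetilde{\lambda} = \lambda - u_m$ and set $\widetilde{A}^{\textrm{PMHSWME}} = A^{\textrm{PMHSWME}}_{p} - u_m I_{N+2}$, reusing the abbreviations $d_0 = h$, $d_1 = g + \frac{\alpha_1^2}{3h}$, $d_2 = \frac{2}{3}\alpha_1$, $d_4 = \alpha_1$, $d_5 = -\frac{\alpha_1^2}{3h}$ and $a_i, c_i$ from the proof of Theorem \ref{th:APHSWME_p_chi}. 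The only new entries are the momentum surplus $\widetilde{d}_1 = \frac{1}{h}\sum_{i=2}^{N}\frac{\alpha_i^2}{2i+1}$ in the first column and $\widetilde{d}_i = \frac{2}{2i+1}\alpha_{i-1}$, $i=3,\ldots,N+1$, in columns $4,\ldots,N+2$, i.e. precisely the entries that the PHSWME regularization had discarded from the momentum row.

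By multilinearity of the determinant in the second row I would write
\begin{equation*}
    \chi_{A^{\textrm{PMHSWME}}_{p}}(\lambda) = \chi_{A^{\textrm{PHSWME}}_{p}}(\lambda) + \Delta,
\end{equation*}
where $\chi_{A^{\textrm{PHSWME}}_{p}}(\lambda) = \left|A_2\right|\bigl((\lambda-u_m)^2 - gh - \alpha_1^2\bigr)$ is supplied verbatim by Theorem \ref{th:APHSWME_p_chi}, and $\Delta$ is the determinant of $\widetilde{A}^{\textrm{PMHSWME}} - \widetilde{\lambda} I_{N+2}$ with its second row replaced by the pure surplus $(\widetilde{d}_1, 0, 0, \widetilde{d}_3, \ldots, \widetilde{d}_{N+1})$. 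Splitting this surplus row once more into its first-column part and its off-diagonal part gives $\Delta = \Delta^{(1)} + \Delta^{(2)}$. The term $\Delta^{(1)}$, whose second row is $(\widetilde{d}_1, 0, \ldots, 0)$, collapses under successive cofactor expansion along the sparse second and first rows to $\Delta^{(1)} = -\widetilde{d}_1\, h\, \left|A_2\right| = -\sum_{i=2}^{N}\frac{\alpha_i^2}{2i+1}\left|A_2\right|$, which is exactly the additional term appearing in \eqref{eq:APMHSWME_p_char_poly}.

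The main obstacle is to show that the off-diagonal surplus contributes nothing, that is $\Delta^{(2)} = 0$, where $\Delta^{(2)}$ has second row $(0,0,0,\widetilde{d}_3,\ldots,\widetilde{d}_{N+1})$. Rather than expand, I would exhibit a column dependence: writing $c^{(k)}$ for the $k$-th column of $\Delta^{(2)}$, a check row by row shows
\begin{equation*}
    c^{(1)} = -\frac{\widetilde{\lambda}}{h}\, c^{(2)} - \frac{\alpha_1}{h}\, c^{(3)},
\end{equation*}
where the only nontrivial identity, arising in the row carrying $d_5$, is $-\frac{\alpha_1}{h}\,a_2 = -\frac{\alpha_1^2}{3h} = d_5$, forced precisely by the regularized value of $d_5$; since the second row has zeros in its first three columns the relation is consistent there too, so the first three columns are linearly dependent and $\Delta^{(2)}=0$. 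This is the primitive-variable counterpart of the vanishing $\left|D_3\right|$-coefficient encountered in the proof of Theorem \ref{th:hybrid_convective_system}. Combining the pieces gives $\chi_{A^{\textrm{PMHSWME}}_{p}}(\lambda) = \left|A_2\right|\bigl((\lambda-u_m)^2 - gh - \alpha_1^2 - \sum_{i=2}^{N}\frac{\alpha_i^2}{2i+1}\bigr)$, and substituting $\left|A_2\right| = \chi_{A_2}(\lambda - u_m)$ from Lemma \ref{lemma:A_2_chi} yields \eqref{eq:APMHSWME_p_char_poly}. The eigenvalues \eqref{eq:APMHSWME_p_EV} then follow by reading off the roots of the two factors: the $N$ scaled zeros of $P'_{N+1}$ are real and simple with $|\lambda_i - u_m| < |\alpha_1|$, while the two acoustic roots satisfy $(\lambda - u_m)^2 = gh + \alpha_1^2 + \sum_{i=2}^{N}\frac{\alpha_i^2}{2i+1} > 0$, hence are always real and strictly separated from the former, giving reality and (for $\alpha_1\neq 0$) pairwise distinctness, with the limiting case $\alpha_1=0$ handled exactly as in Theorem \ref{th:APHSWME_p_chi}.
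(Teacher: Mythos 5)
Your proof is correct, and it takes a genuinely different route from the paper. The paper computes $\det\bigl(\widetilde{A}^{\textrm{PMHSWME}}-\widetilde{\lambda}I_{N+2}\bigr)$ from scratch by cofactor expansion along the first row and first column, introducing the auxiliary subdeterminants $\left|A_i\right|$ and $\left|D_i\right|$ together with their recursion formulas, and then verifying algebraically that the coefficients multiplying $\left|A_3\right|$ and $\left|D_2\right|$ vanish. You instead exploit multilinearity in the perturbed momentum row to write $\chi_{A^{\textrm{PMHSWME}}_{p}}=\chi_{A^{\textrm{PHSWME}}_{p}}+\Delta$, reuse Theorem \ref{th:APHSWME_p_chi} as a black box, reduce $\Delta^{(1)}$ to $-\sum_{i=2}^{N}\frac{\alpha_i^2}{2i+1}\left|A_2\right|$ by two sparse cofactor expansions, and kill $\Delta^{(2)}$ by the explicit column relation $c^{(1)}=-\frac{\widetilde{\lambda}}{h}c^{(2)}-\frac{\alpha_1}{h}c^{(3)}$ (I checked this row by row; the identity $-\frac{\alpha_1}{h}a_2=d_5$ is exactly what the regularization enforces, and the relation is admissible for $\Delta^{(2)}$ precisely because its second row vanishes in the first three columns, whereas it fails for the unsplit matrix). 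What your approach buys is the complete avoidance of the $\left|D_i\right|$ machinery and a structural explanation — a rank-one first-column perturbation producing the extra $\sum_{i=2}^{N}\frac{\alpha_i^2}{2i+1}$ term, and an off-diagonal perturbation that is invisible to the determinant — at the cost of depending on Theorem \ref{th:APHSWME_p_chi}; the paper's self-contained expansion is longer but exhibits the recursive structure shared with Theorems \ref{th:hybrid_convective_system} and \ref{th:APHSWME_p_chi}. Your concluding argument for reality and distinctness of the eigenvalues (interlacing of the zeros of $P'_{N+1}$ inside $(u_m-|\alpha_1|,u_m+|\alpha_1|)$ versus the two acoustic roots outside it) matches the paper's intent and is fine, including the removable limit $\alpha_1=0$.
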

\begin{proof}
    For the proof, we proceed similarly as in Theorem \ref{th:APHSWME_p_chi}. Starting with
    \begin{equation}
       A^{\textrm{PMHSWME}}_{p} = \widetilde{A}^{{\textrm{PMHSWME}}} + u_m I_{N+2}, \quad \widetilde{\lambda} = \lambda - u_m,
    \end{equation}
    we will compute
    \begin{equation}
       \chi_{A^{\textrm{PMHSWME}}_{p}}(\lambda) = \det\left( \widetilde{A}^{{\textrm{PMHSWME}}} - \widetilde{\lambda} I_{N+2} \right) =: \left| \widetilde{A}^{{\textrm{PMHSWME}}} - \widetilde{\lambda} I_{N+2} \right|,
    \end{equation}
    with $\widetilde{A}^{{\textrm{PMHSWME}}} =$
    \begin{equation}
        \left(\begin{array}{cccccc}
        - \widetilde{\lambda} & h &  &  &  &  \\
        g + \frac{1}{h} \sum\limits_{i=1}^N \frac{\alpha_i^2}{2i+1} & - \widetilde{\lambda} & \frac{2}{3} \alpha_1 & \frac{2}{5} \alpha_2 & \dots & \frac{2}{2N+1} \alpha_N \\
         & \alpha_1 & - \widetilde{\lambda} & \frac{3}{5} \alpha_1 & &\\
        -\frac{\alpha_1^2}{3h} &   & \frac{1}{3}\alpha_1 & - \widetilde{\lambda} & \ddots & \\
         &  &  & \ddots & \ddots & \frac{N+1}{2N+1}\alpha_1 \\
         &  &  &  & \frac{N-1}{2N-1}\alpha_1 & - \widetilde{\lambda}
        \end{array}\right) =
        \left(\begin{array}{cccccc}
        - \widetilde{\lambda} & d_0 &  &  &  &  \\
        \widetilde{d}_{1} & - \widetilde{\lambda} & \widetilde{d}_{2} & \widetilde{d}_{3} & \dots & \widetilde{d}_{{N+1}} \\
        0 & d_4 & - \widetilde{\lambda} & c_2 & &\\
        d_5 &   & a_2 & - \widetilde{\lambda} & \ddots & \\
         &  &  & \ddots & \ddots & c_N \\
         &  &  &  & a_N & - \widetilde{\lambda}
        \end{array}\right),
    \end{equation}
    where we made the following definitions
    \begin{equation}
       d_0 = h, \quad d_2 = \frac{2}{3}\alpha_{1}, \quad d_4 = \alpha_1, \quad d_5 = -\frac{\alpha_1^2}{3h},
    \end{equation}
    \begin{equation}
      \widetilde{d}_{1} = g + \frac{1}{h} \sum\limits_{i=1}^N \frac{\alpha_i^2}{2i+1}, \quad \widetilde{d}_{i} = \frac{2}{2i+1}\alpha_{i-1}, \textrm{ for } i=2,\ldots, N+1,
    \end{equation}
    and as usual we denote $a_i = \frac{i-1}{2i-1}\alpha_1$ and $c_i = \frac{i+1}{2i+1}\alpha_1$ for $i=2,\ldots, N$.
    Note how the entries $\widetilde{d}_{i}$ denote entries that are different with respect to the proof of Theorem \ref{th:APHSWME_p_chi}.

    The desired characteristic polynomial $\left| \widetilde{A}^{{\textrm{PMHSWME}}} - \widetilde{\lambda} I_{N+2} \right|$ is computed by developing the determinant with respect to the first row and then developing with respect to the first column.
    \begin{eqnarray*}
        \left| \widetilde{A}^{{\textrm{PMHSWME}}} - \widetilde{\lambda} I_{N+2} \right| &=& \left|
        \begin{array}{cccccc}
    - \widetilde{\lambda} & d_0 &  &  &  &  \\
    \widetilde{d}_{1} & - \widetilde{\lambda} & \widetilde{d}_{2} & \widetilde{d}_{3} & \dots & \widetilde{d}_{{N+1}} \\
    0 & d_4 & - \widetilde{\lambda} & c_2 & &\\
    d_5 &   & a_2 & - \widetilde{\lambda} & \ddots & \\
     &  &  & \ddots & \ddots & c_N \\
     &  &  &  & a_N & - \widetilde{\lambda}
    \end{array}
        \right| \\
      &=& (-\widetilde{\lambda})
      \left|
        \begin{array}{ccccc}
         -\widetilde{\lambda} & \widetilde{d}_{2} & \widetilde{d}_{3} & \dots & \widetilde{d}_{{N+1}} \\
          d_4 & -\widetilde{\lambda} & c_2  &  &   \\
           & a_2 & -\widetilde{\lambda} & \ddots  &   \\
        &  & \ddots & \ddots & c_N \\
         &  &  & a_N & -\widetilde{\lambda}   \\
        \end{array}
        \right|
        -d_0 \left|
        \begin{array}{ccccc}
        \widetilde{d}_{1} & \widetilde{d}_{2} & \widetilde{d}_{3} & \dots & \widetilde{d}_{{N+1}} \\
         0 &  -\widetilde{\lambda} & c_2  &  &   \\
         d_5 & a_2 & -\widetilde{\lambda} & \ddots  &   \\
        &   & \ddots & \ddots & c_N \\
         &   &  & a_N & -\widetilde{\lambda}   \\
        \end{array}
        \right| \\
        &=& (-\widetilde{\lambda}) \left( (-\widetilde{\lambda})  \left|
        \begin{array}{cccc}
           -\widetilde{\lambda} & c_2  &  &   \\
          a_2 & -\widetilde{\lambda} & \ddots  &   \\
       & \ddots & \ddots & c_N \\
         &  & a_N & -\widetilde{\lambda}   \\
        \end{array}
        \right| -d_4  \left|
        \begin{array}{cccccc}
          \widetilde{d}_{2} & \widetilde{d}_{3} & \dots & \dots & \widetilde{d}_{{N+1}} \\
          a_2 &  -\widetilde{\lambda} & c_3  &  & \\
        & a_3 & -\widetilde{\lambda} & \ddots & \\
        & & \ddots & \ddots & c_N \\
         & &   & a_N & -\widetilde{\lambda}   \\
        \end{array}
        \right| \right) \\
        && - d_0 \left( \widetilde{d}_{1}  \left|
        \begin{array}{cccc}
           -\widetilde{\lambda} & c_2  &  &   \\
          a_2 & -\widetilde{\lambda} & \ddots  &   \\
        & \ddots & \ddots & c_N \\
         &  & a_N & -\widetilde{\lambda}   \\
        \end{array}
        \right| + d_5  \left|
        \begin{array}{cccccc}
         \widetilde{d}_{2} & \widetilde{d}_{3} & \dots & \dots & \dots & \widetilde{d}_{{N+1}} \\
          -\widetilde{\lambda} & c_2 & & & &   \\
         & a_3 & -\widetilde{\lambda} & c_4 & &\\
         &   & a_4 & -\widetilde{\lambda} & \ddots   \\
         &   &  & \ddots & \ddots & c_N   \\
         &   & & & a_N & -\widetilde{\lambda}   \\
        \end{array}
        \right|\right) 
    \end{eqnarray*}
    where the last determinant can be written as
    \begin{equation*}
        \left|
        \begin{array}{cccccc}
         \widetilde{d}_{2} & \widetilde{d}_{3} & \hdots & \dots & \dots & \widetilde{d}_{{N+1}} \\
          -\widetilde{\lambda} & c_2 & & & &   \\
         & a_3 & -\widetilde{\lambda} & c_4 & &\\
         &   & a_4 & -\widetilde{\lambda} & \ddots   \\
         &   &  & \ddots & \ddots & c_N   \\
         &   & & & a_N & -\widetilde{\lambda}   \\
        \end{array}
        \right| = \widetilde{d}_{2} c_2 \left|
        \begin{array}{cccc}
           -\widetilde{\lambda} & c_4  &  &   \\
          a_4 & -\widetilde{\lambda} & \ddots  &   \\
        & \ddots & \ddots & c_N \\
         &  & a_N & -\widetilde{\lambda}   \\
        \end{array}
        \right| - (-\widetilde{\lambda})  \left|
        \begin{array}{cccccc}
          \widetilde{d}_{2} & \widetilde{d}_{3} & \dots & \dots & \widetilde{d}_{{N+1}} \\
          a_2 &  -\widetilde{\lambda} & c_3  &  & \\
        & a_3 & -\widetilde{\lambda} & \ddots & \\
        & & \ddots & \ddots & c_N \\
         & &   & a_N & -\widetilde{\lambda}   \\
        \end{array}
        \right|.
    \end{equation*}
    We reuse the definitions of $|A_i|$ from \eqref{eq:subdeterminant}, and $|D_i|$ from \eqref{subdeterminant_D}, including the recursion formula
    \begin{equation*}
        \left|D_{i}\right| = \frac{1}{a_{i-1}}\left( \widetilde{d}_{{i-1}} \left|A_i\right| - \left|D_{i-1}\right| \right),
    \end{equation*}
    so that we can eliminate both $\left|A_4\right|$ and $\left|D_3\right|$ from the expression and factorize the remaining terms with respect to $\left|A_2\right|$, $\left|A_3\right|$, and $\left|D_2\right|$ as follows
    \begin{eqnarray*}
       \chi_{A^{\textrm{PMHSWME}}_{p}}(\lambda) &=&(-\widetilde{\lambda}) \left((-\widetilde{\lambda}) \left|A_2\right|-d_4\left|D_2\right|\right)-d_0  \left(\widetilde{d}_{1}\left|A_2\right|+d_5 \left(\widetilde{d}_2 c_2\left|A_4\right|-(-\widetilde{\lambda}) \left|D_3\right|\right)\right) \\
       &=& \widetilde{\lambda}^2\left|A_2\right|+\widetilde{\lambda} d_4\left|D_2\right|-d_0 \widetilde{d}_{1}\left|A_2\right|-d_0 d_5 d_2 c_2\left|A_4\right|-\widetilde{\lambda} d_0 \cdot d_5 \cdot\left|D_3\right| \\
       &=& \widetilde{\lambda}^2\left|A_2\right|+\widetilde{\lambda} d_4\left|D_2\right|-d_0 \widetilde{d}_{1}\left|A_2\right|-\frac{d_0 d_5 \widetilde{d}_2 c_2}{\left(-a_2 c_2\right)} \cdot\left(\left|A_2\right|+\lambda^2\left|A_3\right|\right)-\widetilde{\lambda} \frac{d_0 d_5}{a_2}\left(\widetilde{d}_2\left|A_3\right|-\left|D_2\right|\right) \\
        &=& \left|A_2\right| \cdot\left(\widetilde{\lambda}^2-d_0 \widetilde{d}_{1}+\frac{d_0 d_5\widetilde{d}_2}{a_2}\right) + |A_3| \cdot \widetilde{\lambda} \left(0\right) +\left|D_2\right| \cdot  \widetilde{\lambda}\left(d_4+\frac{d_0 d_5}{2}\right) \\
        &=& \left|A_2\right| \cdot\left(\widetilde{\lambda}^2 - gh - \frac{2}{3}\alpha_1^2 - \sum\limits_{i=1}^N\frac{\alpha_i^2}{2i+1}\right) +\left|D_2\right| \cdot  \widetilde{\lambda}\left( 0 \right),
    \end{eqnarray*}
    where we inserted the specific terms for the variables $d_i, \widetilde{d}_i$, and $a_2$. Note how the terms multiplied with $|A_3|$ and $|D_2|$ evaluate to zero and only the known $|A_2|$ from Lemma \ref{lemma:A_2_chi} term remains.
    With the notation $\widetilde{\lambda} = \lambda - u_m$ and $|A_2| = \chi_{A_2}(\widetilde{\lambda})$ we obtain the result.
    \begin{equation*}
        \chi_{A^{\textrm{PMHSWME}}_{p}}(\lambda) = \frac{(-\alpha_1)^NN!}{(2N+1)!!} \cdot P^{'}_{N+1}\left(\frac{\lambda-u_m}{\alpha_1}\right) \cdot \left( \left( \lambda - u_m \right)^2 - gh - \alpha_1^2 - \sum\limits_{i=2}^N\frac{\alpha_i^2}{2i+1}\right).
    \end{equation*}
    The characteristic polynomial directly reveals the eigenvalues, which completes the proof.
\end{proof}

Note how the characteristic polynomial \eqref{eq:APMHSWME_p_char_poly} has a very similar structure as the one in \eqref{eq:APHSWME_p_char_poly}, expect for the additional dependence on the higher-order coefficients $\alpha_i$ for $i\geq2$. This then leads to the similarity of the eigenvalues with \eqref{eq:APHSWME_p_EV} as well.

As before, we now give the system matrix for the PMHSWME system in convective variables.
\begin{lemma}\label{lemma:APMHSWME_c}
    The PMHSWME system defined in \eqref{eq:A_PMHSWME_p} can be transformed from primitive to convective variables to read
    \begin{equation}\label{eq:PMHSWME_c}
        \frac{\partial U_{c}}{\partial t} + A^{\textrm{PMHSWME}}_{c} \frac{\partial U_{c}}{\partial x} = 0,
    \end{equation}
    using the convective PMHSWME system matrix $A^{\textrm{PMHSWME}}_{c} \in \mathbb{R}^{(N+2)\times(N+2)}$ defined as $A^{\textrm{PMHSWME}}_{c}=$
    \begin{equation}\label{eq:A_PMHSWME_c}
        \left(\begin{array}{cccccccc}
          & 1 &  &  &  &  &  &  \\
        -u_m^2 + gh - \sum\limits_{i=1}^N \frac{\alpha_i^2}{2i+1} & 2u_m & \frac{2}{3} \alpha_1 & \frac{2}{5} \alpha_2 & \dots & \frac{2}{2i+1} \alpha_i & \dots & \frac{2}{2N+1} \alpha_N \\
        -2u_m - \frac{3}{5}\alpha_1\alpha_2 & 2\alpha_1 & u_m & \frac{3}{5} \alpha_1 & &&  &  \\
        -u_m \alpha_2 - \frac{4}{7}\alpha_1\alpha_3 - \frac{2}{3}\alpha_1^2 & \alpha_2 & \frac{1}{3}\alpha_1 & u_m & \ddots & &  &  \\
        \vdots & \vdots &  & \ddots & \ddots & \frac{i+1}{2i+1}\alpha_1 &  &   \\
        -u_m \alpha_i- \frac{i-1}{2i-1}\alpha_1\alpha_{i-1} - \frac{i+1}{2i+1}\alpha_i \alpha_{i+1}&  \alpha_i &  &  & \frac{i-1}{2i-1}\alpha_1 & u_m & \ddots &  \\
        \vdots & \vdots &  &  &  & \ddots & \ddots &  \frac{N+1}{2N+1}\alpha_1 \\
        -u_m \alpha_N - \frac{N-1}{2N-1} \alpha_1 \alpha_N & \alpha_N &  &  &  &  &  \frac{N-1}{2N-1}\alpha_1 & u_m
        \end{array}\right).
    \end{equation}
\end{lemma}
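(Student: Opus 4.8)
The plan is to reuse the PHSWME result from Lemma \ref{lemma:APHSWME_c} rather than redoing the full similarity transformation from scratch. Writing $M := \frac{\partial T(U_{p})}{\partial U_{p}}$ for the transformation matrix in \eqref{eq:dervars}, the key observation is that, by construction (Theorem \ref{th:APMHSWME_p}), the primitive matrices of PMHSWME and PHSWME differ only in their second row: $A^{\textrm{PMHSWME}}_{p} = A^{\textrm{PHSWME}}_{p} + E$, where $E$ is the rank-one matrix whose only nonzero row is the second one. Explicitly, $E$ carries the entry $\frac{1}{h}\sum_{i=2}^N \frac{\alpha_i^2}{2i+1}$ in position $(2,1)$ and the entries $\frac{2}{2i+1}\alpha_i$ in positions $(2,2+i)$ for $i=2,\ldots,N$, reflecting that PMHSWME keeps the full momentum equation while PHSWME regularizes it (the $(2,2)$ and $(2,3)$ entries agree, so the corresponding differences vanish).

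Since the map $A_{p}\mapsto M A_{p} M^{-1}$ is linear, I would then write $A^{\textrm{PMHSWME}}_{c} = M A^{\textrm{PMHSWME}}_{p} M^{-1} = A^{\textrm{PHSWME}}_{c} + M E M^{-1}$, with $A^{\textrm{PHSWME}}_{c}$ already given by \eqref{eq:A_PHSWME_c}. Writing $E = b\, w^{\top}$ as the outer product of the second unit vector $b$ with the row vector $w$ encoding the differences above, the rank-one structure gives $M E M^{-1} = (Mb)(w^{\top}M^{-1})$. Because $Mb$ is simply the second column of $M$, namely $h$ times the second unit vector, the correction $M E M^{-1}$ is again supported entirely on the second row. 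This immediately confirms that PMHSWME and PHSWME can differ only in their momentum row in convective variables, consistent with rows $3,\ldots,N+2$ of \eqref{eq:A_PMHSWME_c} and \eqref{eq:A_PHSWME_c} being identical.

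It then remains to evaluate the corrected second row $h\,(w^{\top}M^{-1})$ and check it equals the second row of \eqref{eq:A_PMHSWME_c} minus that of \eqref{eq:A_PHSWME_c}. Using the explicit inverse \eqref{eq:inv_dervars}, the columns $j\geq 2$ simply scale $w$ by $1/h$, reproducing the entries $\frac{2}{2i+1}\alpha_i$ after multiplication by $h$; the only genuine bookkeeping is the first column, where the $-\alpha_i/h$ entries in the first column of $M^{-1}$ combine with $w$ to give $\frac{1}{h}\sum_{i=2}^N \frac{\alpha_i^2}{2i+1} - \frac{1}{h}\sum_{i=2}^N \frac{2\alpha_i^2}{2i+1} = -\frac{1}{h}\sum_{i=2}^N \frac{\alpha_i^2}{2i+1}$, which after the factor $h$ yields exactly the additional $-\sum_{i=2}^N \frac{\alpha_i^2}{2i+1}$ appearing in the $(2,1)$ entry of \eqref{eq:A_PMHSWME_c} relative to \eqref{eq:A_PHSWME_c}.

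The computation is entirely routine, and the only mild obstacle is the sign-and-cancellation bookkeeping in the first column just described. As an alternative I could simply repeat the two-step multiplication $M A^{\textrm{PMHSWME}}_{p} M^{-1}$ verbatim as in the proof of Lemma \ref{lemma:APHSWME_c}, but the rank-one argument above is shorter and makes transparent why only the momentum row changes relative to the already-established PHSWME matrix.
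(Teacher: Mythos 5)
Your proposal is correct, and it is more explicit than what the paper actually writes: the paper simply observes that only the second row changes relative to the PHSWME case and declares the proof ``analogous'' to the two-step multiplication $M A_{p} M^{-1}$ carried out in Lemma \ref{lemma:APHSWME_c}, omitting the computation. Your rank-one decomposition $A^{\textrm{PMHSWME}}_{p} = A^{\textrm{PHSWME}}_{p} + e_2 w^{\top}$ is a genuinely different (and arguably better) route: since $M e_2 = h\, e_2$, the conjugated perturbation $M (e_2 w^{\top}) M^{-1} = h\, e_2 (w^{\top} M^{-1})$ is manifestly supported on the second row, which \emph{proves} rather than merely observes that the lower $N$ rows of \eqref{eq:A_PMHSWME_c} coincide with those of \eqref{eq:A_PHSWME_c}, and it reduces the remaining work to a single row--vector product. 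Your bookkeeping checks out: the off-first-column entries of $w$ are rescaled by $1/h$ and restored by the factor $h$, giving $\tfrac{2}{2i+1}\alpha_i$ in positions $(2,2+i)$ for $i \geq 2$, and in the first column the combination $\tfrac{1}{h}\sum_{i=2}^{N}\tfrac{\alpha_i^2}{2i+1} - \tfrac{2}{h}\sum_{i=2}^{N}\tfrac{\alpha_i^2}{2i+1}$ produces, after multiplication by $h$, exactly the shift from $-\tfrac{\alpha_1^2}{3}$ to $-\sum_{i=1}^{N}\tfrac{\alpha_i^2}{2i+1}$ in the $(2,1)$ entry. The paper's approach (redoing the full similarity transformation) requires no new idea but repeats $O(N^2)$ entries of computation; yours isolates the only part that is actually new and makes the structural reason for the single-row difference transparent.
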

\begin{proof}
    We note that only the second row of the system matrix in \eqref{eq:A_PMHSWME_c} changes with respect to the matrix from \eqref{eq:A_PHSWME_c}.
    The proof is analogous to the proof of Theorem \ref{lemma:APHSWME_c} and is left out here for brevity.
\end{proof}

Note how the only differences with respect to the PHSWME from Theorem \ref{lemma:APHSWME_c} are in the second row of the matrix. The second row is now the same as for the full SWME system \eqref{eq:A_SWME_c}, the SWLME derived in \cite{Koellermeier2020i}, and the MHSWME \eqref{eq:A_MHSWME_c}. In other words, the momentum balance equation of the PMHSWME is unchanged with respect to the SWME.

Using Theorem \ref{th:APHSWME_steady_states} and the similarity with the SWLME, we can derive analytical steady states of the PMHSWME system.

\begin{theorem}\label{th:APMHSWME_steady_states}
    The PMHSWME system defined in \eqref{eq:A_PMHSWME_p} allows for the following steady states
    \begin{equation}\label{eq:APMHSWME_steady_states}
        h=h_0 \quad \textrm{or} \quad -\Fr^2 + \frac{1}{2} \left(\left(\frac{h}{h_0}\right)^2+\left(\frac{h}{h_0}\right)\right) + \sum\limits_{i=1}^{N}\frac{\Ma_i^2 \Fr^2}{2i+1}  \left( \left(\frac{h}{h_0}\right)^3+\left(\frac{h}{h_0}\right)^2+\left(\frac{h}{h_0}\right)\right)=0,
    \end{equation}
    where, in addition to the Froude number $\Fr=\frac{u_m}{\sqrt{gh}}$, $\Ma_i = \frac{\alpha_{i,0}}{u_{m,0}}$ denotes the dimensionless $i$-th moment number for $i=1, \ldots, N$ at a reference state.
\end{theorem}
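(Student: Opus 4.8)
The plan is to follow the same three-part decomposition used in the proof of Theorem~\ref{th:APHSWME_steady_states}, exploiting the fact that the convective PMHSWME matrix \eqref{eq:A_PMHSWME_c} and the convective PHSWME matrix \eqref{eq:A_PHSWME_c} differ \emph{only} in their second (momentum) row. I would therefore work entirely in the convective form \eqref{eq:PMHSWME_c} and reuse as much of the PHSWME steady-state computation as possible.

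First, the mass equation (first row) is unchanged, so it immediately yields $\partial_x(h u_m)=0$, i.e.\ $h u_m=\text{const}$. Second, the last $N$ moment rows of \eqref{eq:A_PMHSWME_c} coincide with those of the PHSWME, so the part $(2+i)$ computation of Theorem~\ref{th:APHSWME_steady_states} transfers verbatim: the moment equations assemble into a system governed by the matrix $A_2+u_m I_N$ from \eqref{eq:A_2}, whose nonsingularity for a nontrivial velocity profile forces $\partial_x(\alpha_1/h)=0$ and $\partial_x\alpha_i=0$ for $i\ge 2$. Hence $\alpha_1/h=\text{const}$ while $\alpha_i=\text{const}$ for $i\ge2$.

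The only genuinely new step is the momentum row, which here equals the unregularized SWME momentum flux (identical to the SWLME, as noted after Lemma~\ref{lemma:APMHSWME_c}). In steady state this row integrates to the conserved quantity
\begin{equation}
  h u_m^2 + \tfrac{1}{2} g h^2 + h \sum_{j=1}^{N} \frac{\alpha_j^2}{2j+1} = \text{const}.
\end{equation}
Substituting the mass and moment relations from the previous step collapses this to a scalar algebraic equation in $h$ alone, after which I would nondimensionalize with $r=h/h_0$, the Froude number $\Fr$, and the moment numbers $\Ma_i$, and factor out the trivial root $r=1$ (corresponding to $h=h_0$) to obtain the stated cubic polynomial \eqref{eq:APMHSWME_steady_states}, exactly along the lines of \cite{Koellermeier2020i}, Eqs.~(4.29)--(4.34).

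The main obstacle is precisely this final reconciliation. Although the momentum invariant matches the SWLME, the moment scalings inherited from the PHSWME analysis do \emph{not}: here $\alpha_1$ scales linearly with $h$ whereas $\alpha_i$ for $i\ge2$ stay constant, in contrast to the SWLME where every $\alpha_i$ scales with $h$. The delicate part is therefore the careful bookkeeping of which moments scale like $h$ when substituting into the momentum invariant and nondimensionalizing, so as to verify that the algebra indeed closes into the form \eqref{eq:APMHSWME_steady_states}; this is the step most likely to reveal hidden assumptions or require additional justification.
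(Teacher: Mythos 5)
Your proposal follows the paper's proof step for step: the mass equation and the last $N$ moment rows are inherited unchanged from the PHSWME, giving $hu_m=\mathrm{const}$, $\alpha_1/h=\mathrm{const}$ and $\alpha_i=\mathrm{const}$ for $i\ge2$, while the momentum row is the unregularized SWME/SWLME one and integrates to the flux invariant you wrote down (your version, with the factor $h$ multiplying $\sum_j\alpha_j^2/(2j+1)$, is the correct one; the invariant displayed in the paper's proof drops that $h$).

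The obstacle you flag at the end is not a formality: it is a genuine gap, and it is present in the paper's own proof as well, which disposes of it with the phrase ``in analogy to Theorem~\ref{th:APHSWME_steady_states}.'' Carrying the substitution through with $Q=hu_m$, $\alpha_1=(\alpha_{1,0}/h_0)\,h$ and $\alpha_i=\alpha_{i,0}$ for $i\ge2$, the invariant becomes
\begin{equation*}
\frac{Q^2}{h}+\frac{1}{2}gh^2+\frac{\alpha_{1,0}^2}{3h_0^2}\,h^3+\sum_{i=2}^N\frac{\alpha_{i,0}^2}{2i+1}\,h=\mathrm{const},
\end{equation*}
and dividing $E(h)-E(h_0)=0$ by $h-h_0$ and multiplying by $h/(gh_0^2)$ yields
\begin{equation*}
-\Fr^2+\frac{1}{2}\left(\left(\frac{h}{h_0}\right)^2+\frac{h}{h_0}\right)+\frac{\Ma_1^2\Fr^2}{3}\left(\left(\frac{h}{h_0}\right)^3+\left(\frac{h}{h_0}\right)^2+\frac{h}{h_0}\right)+\sum_{i=2}^N\frac{\Ma_i^2\Fr^2}{2i+1}\,\frac{h}{h_0}=0 .
\end{equation*}
The terms with $i\ge2$ enter only linearly in $h/h_0$, because $h\alpha_i^2\propto h$ rather than $\propto h^3$; the full cubic bracket in \eqref{eq:APMHSWME_steady_states} appears for every $i$ only if every $\alpha_i$ scales with $h$, as happens for the SWLME but not here. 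So your plan cannot be completed to the stated formula as written: either the statement must be corrected to the expression above, or the moment steady-state conditions $\partial_x\alpha_i=0$ for $i\ge2$ must be revisited. You were right to single out this reconciliation as the critical step, but the proposal stops exactly where the work is, and doing that work shows the claimed form does not close.
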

\begin{proof}
    We first use the equality of the first equation and the last $N$ equations with the PHSWME system defined in \eqref{eq:A_PHSWME_c}. This lets us derive the steady state conditions
    \begin{eqnarray}
        \partial_x\left( h u_m \right) &=& 0\\
        \partial_x \left( \frac{\alpha_1}{h} \right) = 0 &\textrm{and}& \partial_x  \alpha_i = 0  \textrm{ for } i=2,\ldots, N.
    \end{eqnarray}
    As the second equation is the same as the second equation in the SWLME from \cite{Koellermeier2020i}, we can analogously derive that all steady states fulfill
    \begin{equation}
        \partial_x \left( \frac{1}{2}gh^2 + h u_m^2 + \sum\limits_{i=1}^{N}\frac{1}{2i+1} \alpha_i^2 \right) = 0.
    \end{equation}
    This directly leads to the condition for the steady states in analogy to Theorem \ref{th:APHSWME_steady_states}.
\end{proof}
The steady states thus also depend on the higher-order coefficients. Same as in \cite{Koellermeier2020i}, a new dimensionless number $\Ma^2:=\sum\limits_{i=1}^N \frac{1}{2 i+1}(\Ma)_i^2$ appears. $\Ma$ can be interpreted as measuring a weighted deviation from equilibrium, including all coefficients weighted by the norm of the Legendre polynomials.\\

\subsection{Model property comparison}
Before the numerical comparison, a short summative comparison of the proven analytical properties of all models is in order. We have visualized this concisely in Table \ref{table:models}. It becomes clear that the previously existing models SWME, HSWME, SWLME have individual disadvantages, that cannot be overcome by the new combination MHSWME. However, via the primitive regularization and the PHSWME, we were able to derive the PMHSWME, which finally fulfills all desired properties.
\begin{table}
\begin{center}
\begin{tabular}{ ||P{2.9cm}|P{3.2cm}|P{1.4cm}|P{1.5cm}|P{1.4cm}|P{1.4cm}||}
 \hline \hline
 model acronym & model name & glob. hyperbolic & pres. momentum & analy. steady states & nonlin. moment eqns \\ \hline \hline
 SWE & Shallow Water Equations & $\checkmark$ & $\checkmark$ & $\checkmark$ & ${\color{red} {\color{red} \times}}$ \\ \hline
 SWME \cite{Kowalski2019}: \eqref{eq:SWME_c}, \eqref{eq:SWME_p} & Shallow Water Moment Equations & ${\color{red} \times}$ & $\checkmark$ & ${\color{red} \times}$ & $\checkmark$\\ \hline
 HSWME \cite{Koellermeier2020c}:  \eqref{eq:A_HSWME_c} & Hyperbolic SWME & $\checkmark$ & ${\color{red} \times}$ & ${\color{red} \times}$ & $\checkmark$ \\ \hline
 SWLME \cite{Koellermeier2020i}: \eqref{eq:A_SWLME_c} & Shallow Water Linearized Moment Equations & $\checkmark$ & $\checkmark$ & $\checkmark$ & ${\color{red} \times}$ \\ \hline
 MHSWME (new): \eqref{eq:A_MHSWME_c} & Moment regularization for HSWME & ${\color{red} \times}$ & $\checkmark$ & ${\color{red} \times}$ & $\checkmark$ \\ \hline
 PHSWME (new): \eqref{eq:A_PHSWME_p}, \eqref{eq:A_PHSWME_c} & Primitive regularization for HSWME & $\checkmark$ & ${\color{red} \times}$ & $\checkmark$ & $\checkmark$ \\ \hline
 PMHSWME (new): \eqref{eq:A_PMHSWME_p}, \eqref{eq:A_PMHSWME_c} & Primitive Moment regularization for HSWME & $\checkmark$ & $\checkmark$ & $\checkmark$ & $\checkmark$ \\
 \hline \hline
\end{tabular}
\caption{\label{table:models} Model properties comparison. All existing models SWME, HSWME, SWLME and even the new MHSWME have different disadvantages. The disadvantages can be successively overcome by the new primitive regularizations PHSWME and PMHSWME.}
\end{center}
\end{table}

\section{Numerical dam-break test case}
\label{sec:numerics}
While the focus of this work is the derivation and analysis of new hyperbolic models for free-surface flows, one numerical test case will be employed to briefly compare the performance of the models regarding accuracy and stability. Further numerical tests are left for future work.

For the time and spatial discretization, the explicit, first-order, path-consistent finite-volume scheme also employed in \cite{Koellermeier2020c} is used.

For simplicity, we employ a Newtonian friction term with bottom slip, also used in \cite{Kowalski2019,Koellermeier2020c} with slip length $\lambda = 0.1$ and viscosity $\nu = 0.1$. Other parameters are possible but we focus on the transport part in this paper and do not vary the friction terms.

We compare the new and existing hyperbolic models with the original SWME \eqref{eq:A_SWME_c} and aim for small deviations, since it means that despite the regularization, the accuracy is kept high, while at the same time the analytical properties of the models are improved, e.g., with respect to hyperbolicity, steady states.

For this dam-break test case, we consider an open domain $x\in [-1,1]$ with $n_x=1000$ equidistant cells and simulate until $t_{\textrm{end}}=0.2$ with $\Delta t = 0.005$. The initial water height is given by the step function
\begin{equation}
    h(0,x) = \begin{cases}
\begin{array}{cc}
 1.5 & x \leq 0, \\
 1 & x > 0, \\
\end{array}
\end{cases}
\end{equation}
with a linear velocity profile
\begin{equation}
    u(0,x,\zeta) = 0.5 \zeta,
\end{equation}
leading to the velocity profile coefficients
\begin{equation}
    u_m(0,x) = 0.25, \quad \alpha_1(0,x) = -0.25, \quad \alpha_i(0,x) = 0, \text{ for } i = 2,\ldots, N,
\end{equation}
for moment models ranging from $N=1,\ldots,5$.
The same test case and velocity profile was also used in \cite{Koellermeier2020i}.

For this dam-break test case, the relative $L_1$-errors with respect to the SWME model are plotted in Figure \ref{fig:error_dambreak}. 
Firstly, all models result in relatively accurate solutions with relative errors below $7\%$. Furthermore, we can conclude that SWLME performs the worst for all flow variables, which might be due to the strong regularization assumptions of this approach. Next, HSWME performs relatively bad for water height $h$, mean velocity $u_m$, and $\alpha_2$. The MHSWME only results in relatively large errors for $\alpha_2$, indicating that the unchanged momentum equation leads to more accurate water height $h$ and average velocity $u_m$. The PHSWME (which includes changes in the momentum equation) performs relatively bad for water height $h$ and average velocity $u_m$, which could be attributed to the deleted terms in the momentum equation and the different momentum-height coupling. The PMHSWME results in the most accurate solutions for all variables (together with PHSWME in the case of $\alpha_1$). It is expected that retaining more terms from the SWME model leads to less deviations from the SWME results. However, the effect of that requires a more detailed study than what can be presented in this paper. We therefore leave this for future work.
This test case demonstrates the beneficial regularization strategy of the PMHSWME, only regularizing the higher-order moment equations and keeping the mass and momentum equation unchanged.

\begin{figure}[h!]
    \centering
    \begin{subfigure}[b]{0.47\textwidth}
        \centering
        \includegraphics[width=\textwidth]{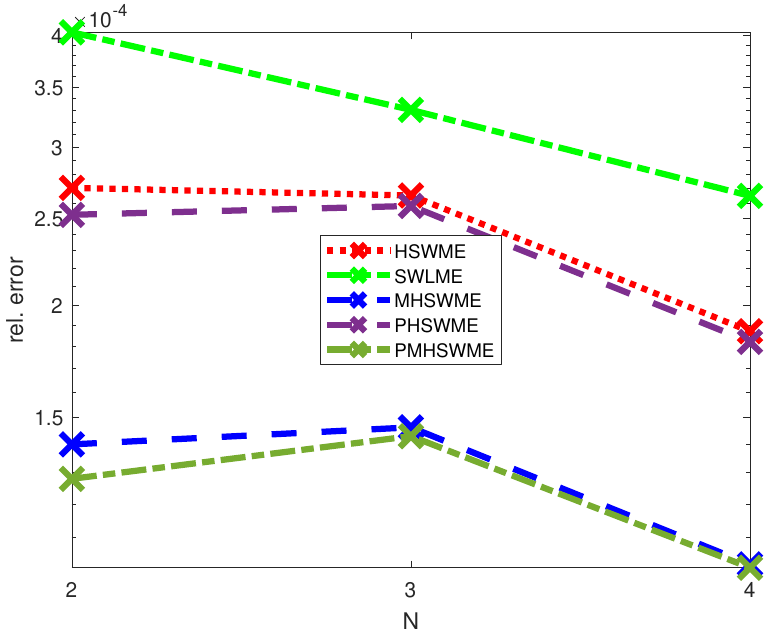}
        \caption{water height $h$}
        \label{fig:error_dambreak_h}
    \end{subfigure}
    \hfill
    \begin{subfigure}[b]{0.47\textwidth}
        \centering
        \includegraphics[width=\textwidth]{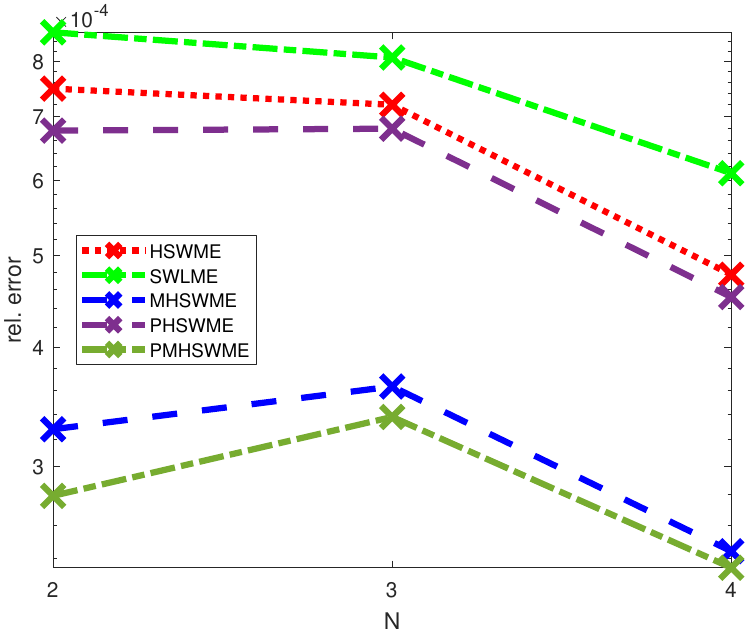}
        \caption{mean velocity $u_m$}
        \label{fig:error_dambreak_u}
    \end{subfigure}
    \vskip\baselineskip
    \begin{subfigure}[b]{0.47\textwidth}
        \centering
        \includegraphics[width=\textwidth]{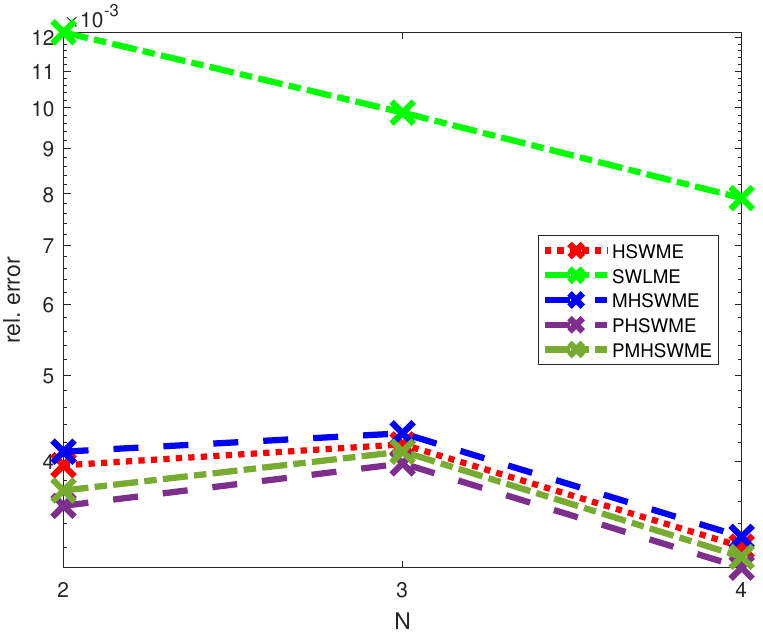}
        \caption{linear coefficient $\alpha_1$}
        \label{fig:error_dambreak_alpha1}
    \end{subfigure}
    \hfill
    \begin{subfigure}[b]{0.47\textwidth}
        \centering
        \includegraphics[width=\textwidth]{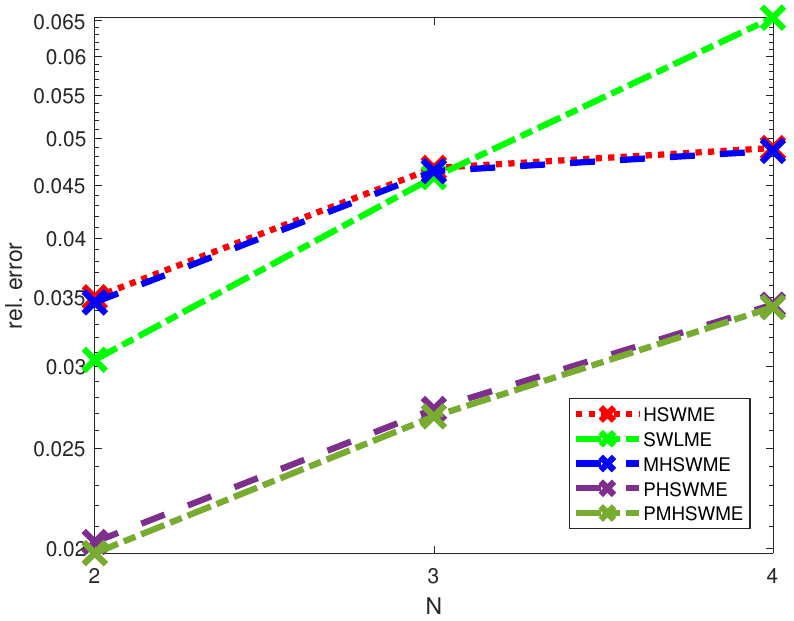}
        \caption{quadratic coefficient $\alpha_2$}
        \label{fig:error_dambreak_alpha2}
    \end{subfigure}
    \caption{Dam break test case relative $L_1$-errors w.r.t. SWME with the same number of moments $N=2,3,4$ for HSWME (red), SWLME (green), MHSWME (blue), PHSWME (purple), PMHSWME (olive). Water height $h$ (top left), mean velocity (top right), linear coefficient $\alpha_1$ (bottom left), and quadratic coefficient $\alpha_2$ (bottom right). Existing models like HSWME and SWLME perform bad for most variables while the new model PMHSWME shows best results for most variables.}
    \label{fig:error_dambreak}
\end{figure}

In addition, we  provide the relative $L_1$-errors with respect to the free-surface solver used as reference in \cite{Kowalski2019} in Figure \ref{fig:error_ref_dambreak}. This reference solution is based on a direct discretization of the transformed reference system derived in \cite{Kowalski2019} on a grid of $n_x=160$ cells in horizontal direction and $n_{\zeta} = 50$ cells in vertical direction. Based on the fact that the SWME model converges to the reference solution as shown in \cite{Kowalski2019} and all our tested models from \ref{fig:error_dambreak} are relatively close to the SWME model, we can infer also convergence of our new models. This is confirmed by the results in Figure \ref{fig:error_ref_dambreak}. In addition to Figure \ref{fig:error_dambreak}, we have also plotted the relative errors of SWME. As expected, the SWME model has the smallest error for larger $N$. Interestingly, the HSWME model seems to be very accurate for the water height. Furthermore, the SWLME model stops converging for the linear coefficient $\alpha_1$. As reported before, the new models PMHSWME and also PHSWME perform among the best models for all variables. This again underlines the benefits of the primitive regularization strategy introduced in this paper.

\begin{figure}[h!]
    \centering
    \begin{subfigure}[b]{0.47\textwidth}
        \centering
        \includegraphics[width=\textwidth]{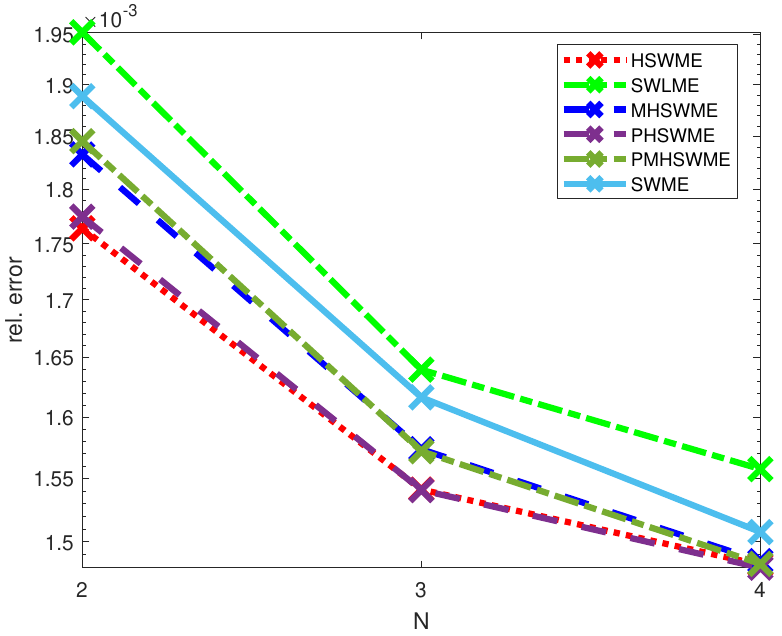}
        \caption{water height $h$}
        \label{fig:error_ref_dambreak_h}
    \end{subfigure}
    \hfill
    \begin{subfigure}[b]{0.47\textwidth}
        \centering
        \includegraphics[width=\textwidth]{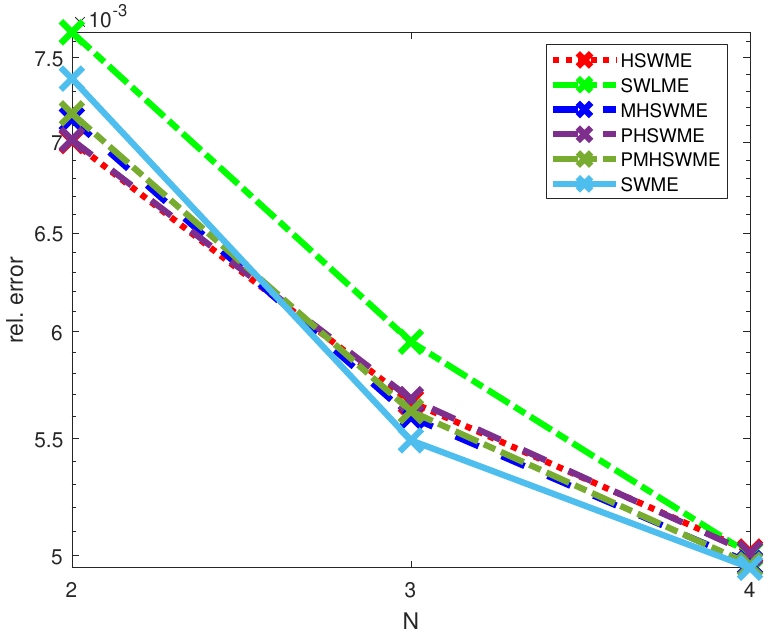}
        \caption{mean velocity $u_m$}
        \label{fig:error_ref_dambreak_u}
    \end{subfigure}
    \vskip\baselineskip
    \begin{subfigure}[b]{0.47\textwidth}
        \centering
        \includegraphics[width=\textwidth]{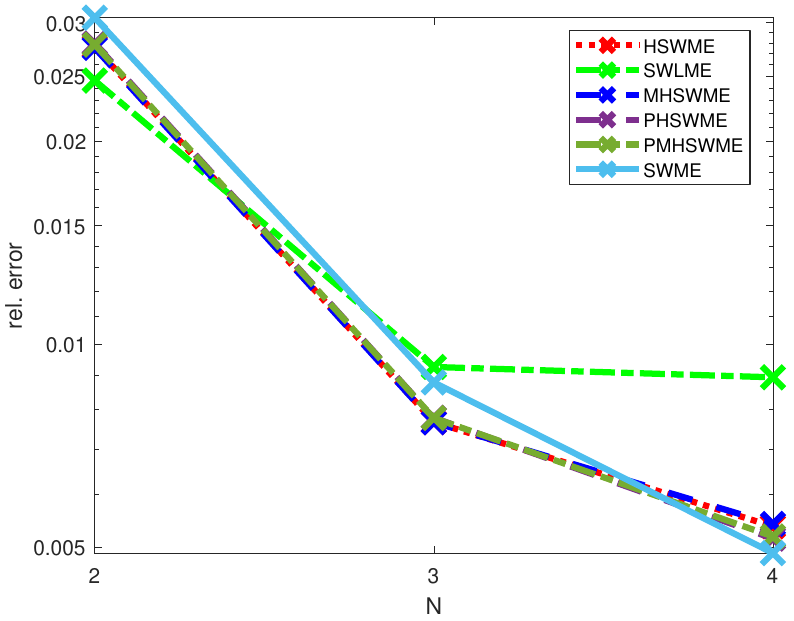}
        \caption{linear coefficient $\alpha_1$}
        \label{fig:error_ref_dambreak_alpha1}
    \end{subfigure}
    \hfill
    \begin{subfigure}[b]{0.47\textwidth}
        \centering
        \includegraphics[width=\textwidth]{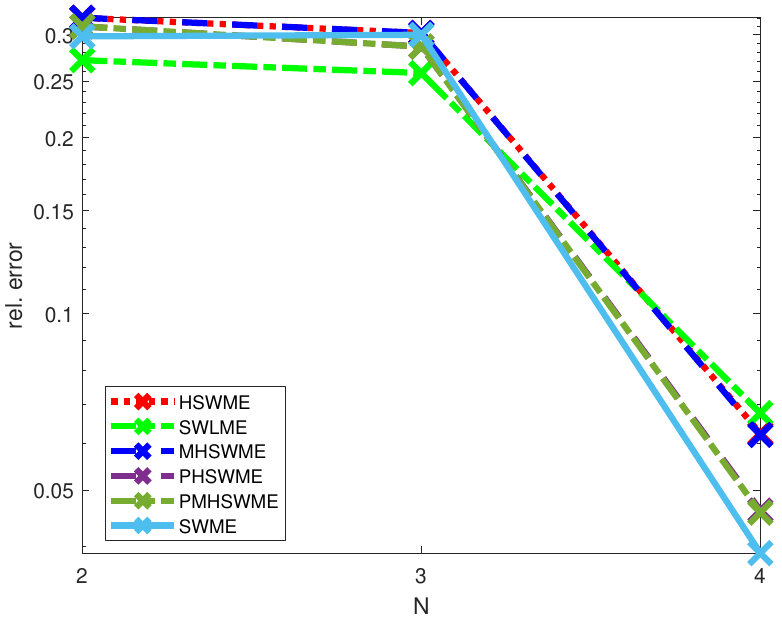}
        \caption{quadratic coefficient $\alpha_2$}
        \label{fig:error_ref_dambreak_alpha2}
    \end{subfigure}
    \caption{Dam break test case relative $L_1$-errors w.r.t. reference solver and $N=2,3,4$ for HSWME (red), SWLME (green), MHSWME (blue), PHSWME (purple), PMHSWME (olive), SWME(light blue). Water height $h$ (top left), mean velocity (top right), linear coefficient $\alpha_1$ (bottom left), and quadratic coefficient $\alpha_2$ (bottom right). Existing models like HSWME and SWLME perform bad for several variables while the new models like PHSWME and PMHSWME show best results for most variables.}
    \label{fig:error_ref_dambreak}
\end{figure}

For more detailed results on the dam-break solutions for $h, u_m, \alpha_1, \alpha_2$, we refer to Figures \ref{fig:sol_dambreak_h}, \ref{fig:sol_dambreak_u}, \ref{fig:sol_dambreak_alpha1}, \ref{fig:sol_dambreak_alpha2}, respectively, below.

More numerical test cases are possible. A strategy to obtain well-balanced numerical solutions for the SWLME model was already developed in \cite{Koellermeier2020i}, based on the framework developed in \cite{Castro2020}. The main ingredient necessary is the analytical computation of steady states, potentially by solving a non-linear system of equations. We also note the existence of other methods like the global flux method, which was applied to shallow water moment models in \cite{CIALLELLA2026106887}.
We leave more detailed numerical test cases and the demonstration of well-balancing properties for the analytically obtained steady states of the new models for future work.

\section{Conclusion and future work}
We derived new Shallow Water Moment Equations, for which global hyperbolicity is proven, analytical steady states are computed, and high accuracy is achieved in a numerical test. This was made possible by transforming the PDE system to primitive variables and performing a hyperbolic regularization in the primitive variable setting. In primitive variables, hyperbolicity could be analyzed efficiently and via a similarity transformation back to convective variables it was clear that the system is hyperbolic. The analytical derivation of steady states was proven to be a further advantage of the new models. A numerical dam-break test case has shown that regularizing only the higher-order moment equations and keeping the momentum equation unchanged increases the accuracy. Importantly, we showed that this only leads to a globally hyperbolic system for the primitive regularization in PMHSWME and not for the convective version MHSWME.

The results of this paper open the possibility for a variety of follow-up projects. Firstly, a more detailed numerical comparison of the different models for different test cases should be performed. Furthermore, the analysis of equilibrium states, see \cite{Huang2022}, energy/entropy properties, see \cite{Gassner2016}, and the adaptation of structure-preserving numerical schemes \cite{Jin2010} can be envisioned. Ultimately, an extension to the multi-dimensional case and inclusion of more physical friction terms for real-world simulations should be the goal.

\section*{Acknowledgments} \label{section: acknowledgments}
This publication is part of the project \textit{HiWAVE} with file number VI.Vidi.233.066 of the \textit{ENW Vidi} research programme, funded by the \textit{Dutch Research Council (NWO)} under the grant \url{https://doi.org/10.61686/CBVAB59929}.

\begin{figure}[h!]
    \centering
    \begin{subfigure}[b]{0.47\textwidth}
        \centering
        \includegraphics[width=\textwidth]{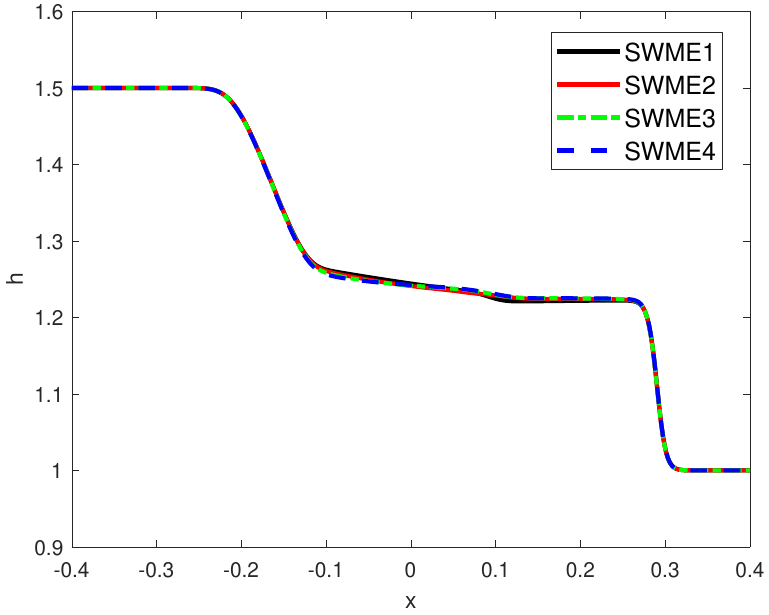}
        \caption{\textrm{SWME}}
        \label{fig:smalldam_SWME_h}
    \end{subfigure}
    \hfill
    \begin{subfigure}[b]{0.47\textwidth}
        \centering
        \includegraphics[width=\textwidth]{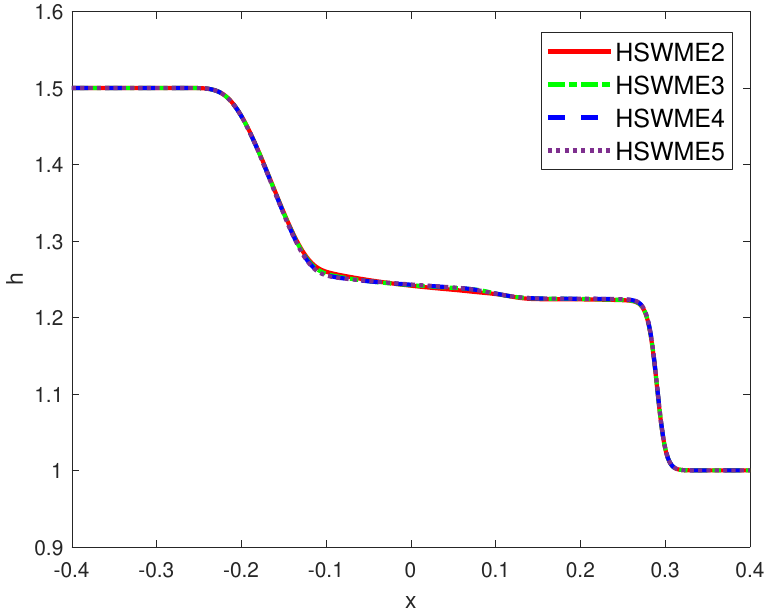}
        \caption{\textrm{HSWME}}
        \label{fig:smalldam_HSWME_h}
    \end{subfigure}
    \vskip\baselineskip
    \begin{subfigure}[b]{0.47\textwidth}
        \centering
        \includegraphics[width=\textwidth]{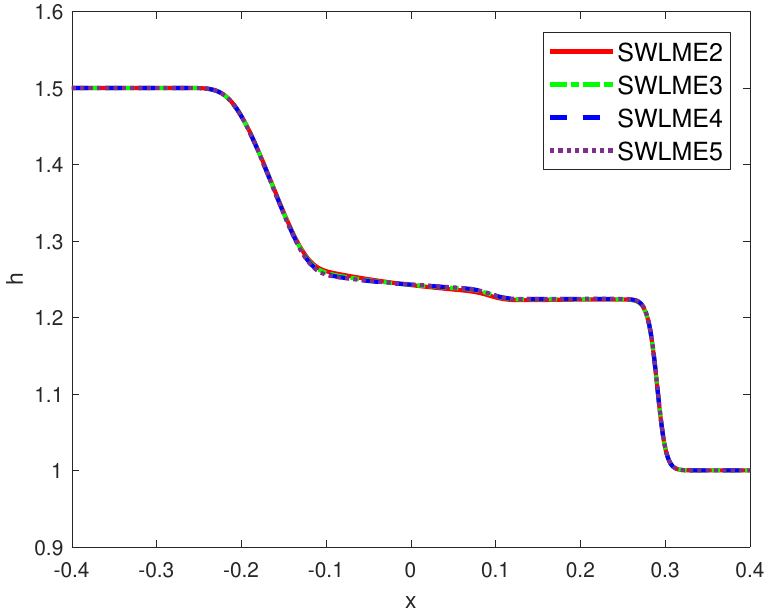}
        \caption{\textrm{SWLME}}
        \label{fig:smalldam_SWLME_h}
    \end{subfigure}
    \hfill
    \begin{subfigure}[b]{0.47\textwidth}
        \centering
        \includegraphics[width=\textwidth]{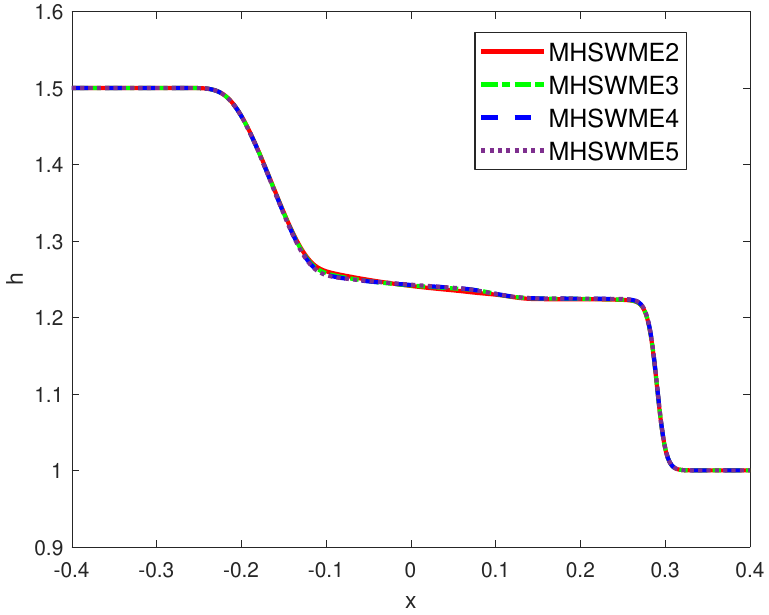}
        \caption{\textrm{MHSWME}}
        \label{fig:smalldam_MHSWME_h}
    \end{subfigure}
    \vskip\baselineskip
    \begin{subfigure}[b]{0.47\textwidth}
        \centering
        \includegraphics[width=\textwidth]{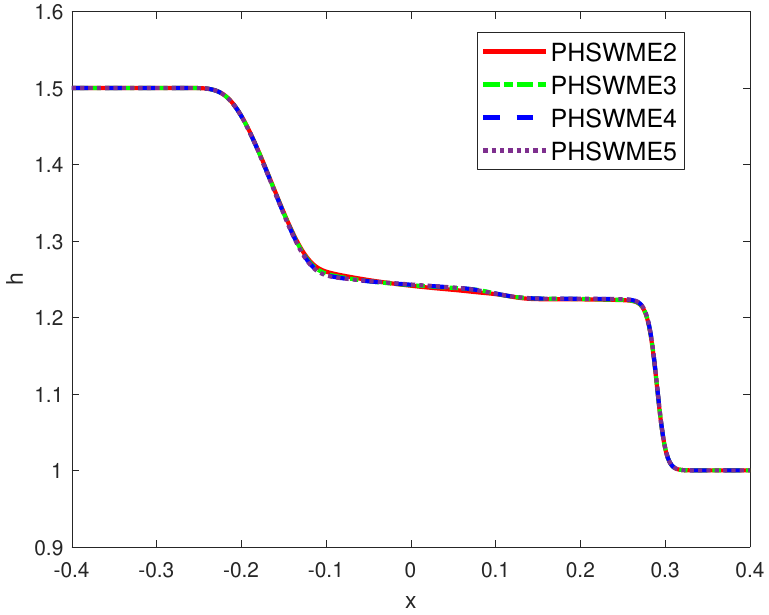}
        \caption{PHSWME}
        \label{fig:smalldam_PHSWME_h}
    \end{subfigure}
    \hfill
    \begin{subfigure}[b]{0.47\textwidth}
        \centering
        \includegraphics[width=\textwidth]{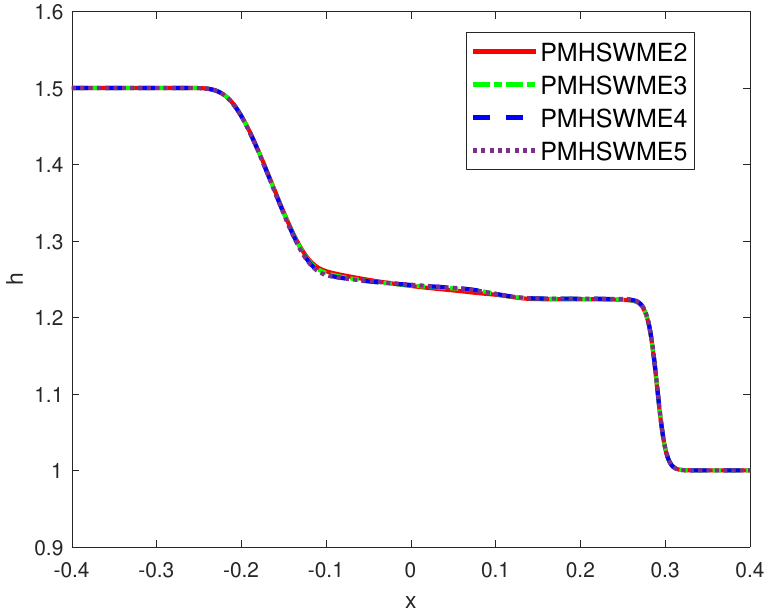}
        \caption{PMHSWME}
        \label{fig:smalldam_PMHSWME_h}
    \end{subfigure}
    \caption{Dam break test case water height $h$ solutions for SWME (top left), HSWME (top right), SWLME (middle left), MHSWME (middle right), PSWME (bottom left), PMSWME (bottom right), for $N=1,2,3,4,5$. Note that all models are equivalent for $N=1$, so this is only shown in the SWME plot. SWME5 is unstable and left out.}
    \label{fig:sol_dambreak_h}
\end{figure}

\begin{figure}[h!]
    \centering
    \begin{subfigure}[b]{0.47\textwidth}
        \centering
        \includegraphics[width=\textwidth]{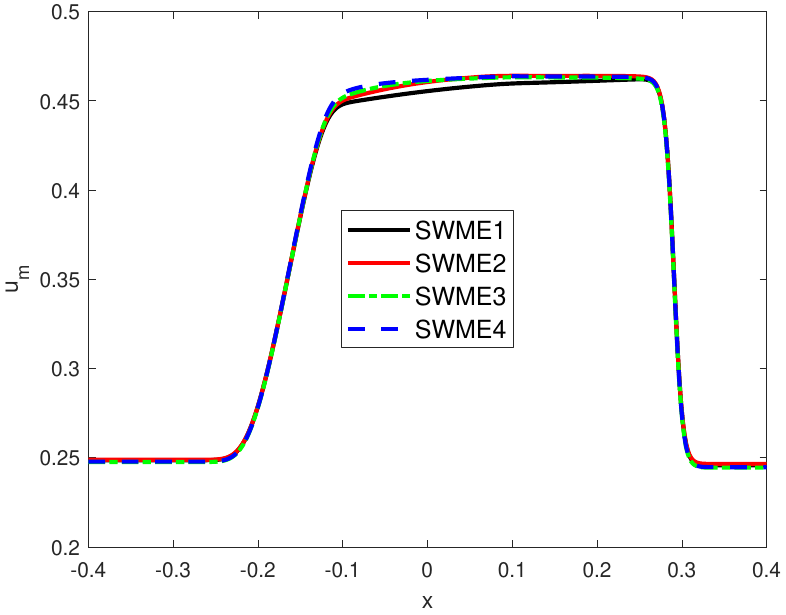}
        \caption{\textrm{SWME}}
        \label{fig:smalldam_SWME_u}
    \end{subfigure}
    \hfill
    \begin{subfigure}[b]{0.47\textwidth}
        \centering
        \includegraphics[width=\textwidth]{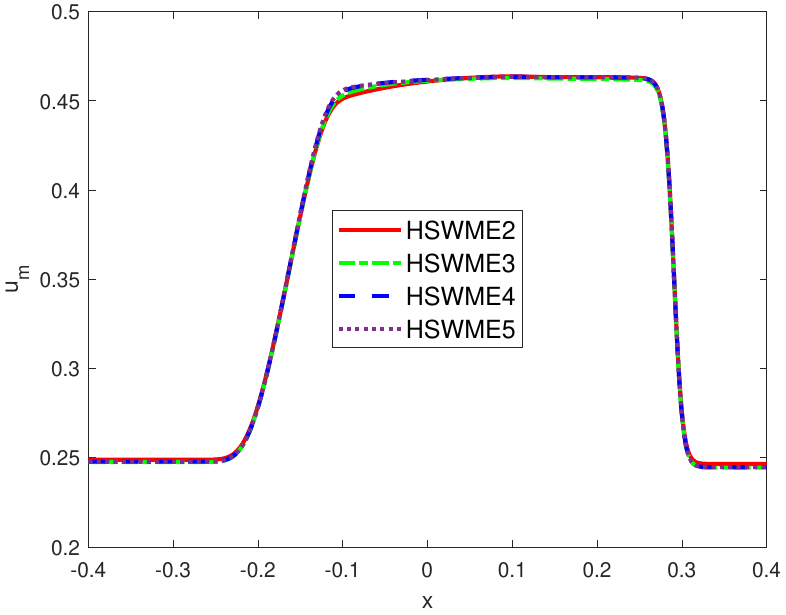}
        \caption{\textrm{HSWME}}
        \label{fig:smalldam_HSWME_u}
    \end{subfigure}
    \vskip\baselineskip
    \begin{subfigure}[b]{0.47\textwidth}
        \centering
        \includegraphics[width=\textwidth]{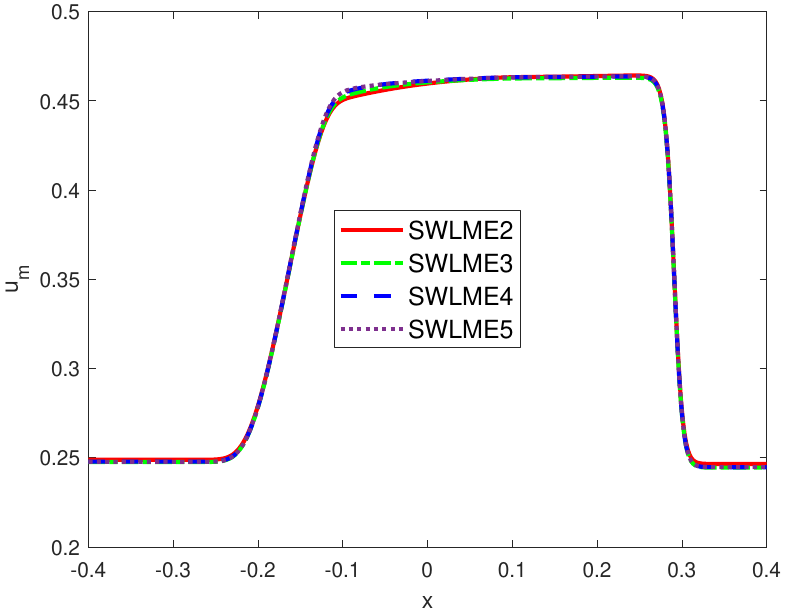}
        \caption{\textrm{SWLME}}
        \label{fig:smalldam_SWLME_u}
    \end{subfigure}
    \hfill
    \begin{subfigure}[b]{0.47\textwidth}
        \centering
        \includegraphics[width=\textwidth]{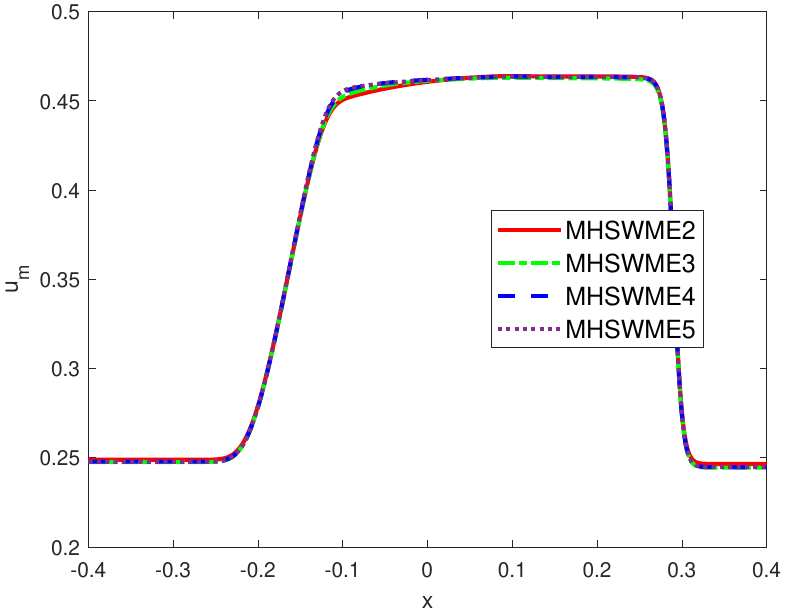}
        \caption{\textrm{MHSWME}}
        \label{fig:smalldam_MHSWME_u}
    \end{subfigure}
    \vskip\baselineskip
    \begin{subfigure}[b]{0.47\textwidth}
        \centering
        \includegraphics[width=\textwidth]{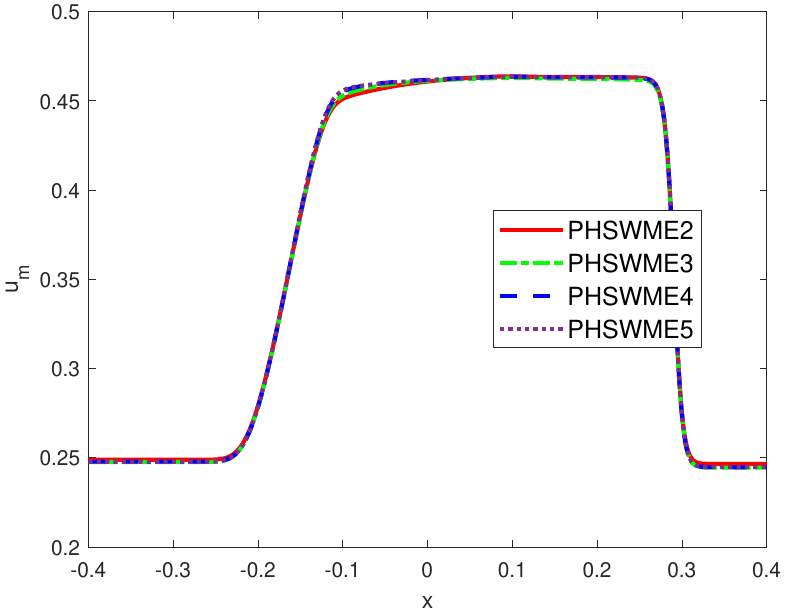}
        \caption{PHSWME}
        \label{fig:smalldam_PHSWME_u}
    \end{subfigure}
    \hfill
    \begin{subfigure}[b]{0.47\textwidth}
        \centering
        \includegraphics[width=\textwidth]{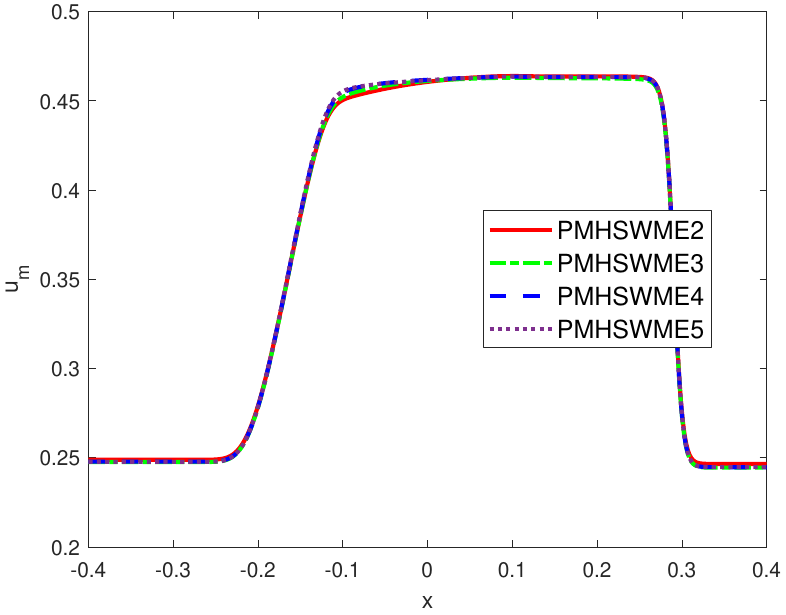}
        \caption{PMHSWME}
        \label{fig:smalldam_PMHSWME_u}
    \end{subfigure}
    \caption{Dam break test case average velocity $u_m$ solutions for SWME (top left), HSWME (top right), SWLME (middle left), MHSWME (middle right), PSWME (bottom left), PMSWME (bottom right), for $N=1,2,3,4,5$. Note that all models are equivalent for $N=1$, so this is only shown in the SWME plot. SWME5 is unstable and left out.}
    \label{fig:sol_dambreak_u}
\end{figure}

\begin{figure}[h!]
    \centering
    \begin{subfigure}[b]{0.47\textwidth}
        \centering
        \includegraphics[width=\textwidth]{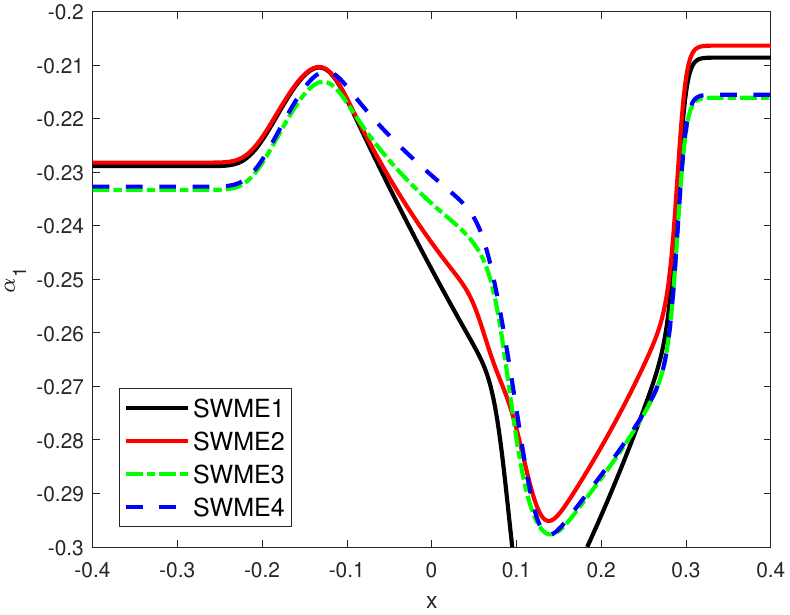}
        \caption{\textrm{SWME}}
        \label{fig:smalldam_SWME_alpha1}
    \end{subfigure}
    \hfill
    \begin{subfigure}[b]{0.47\textwidth}
        \centering
        \includegraphics[width=\textwidth]{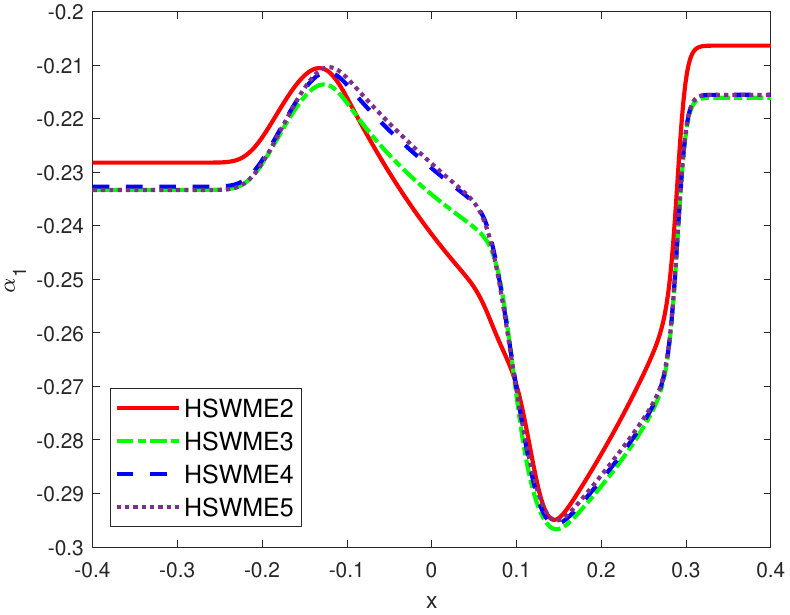}
        \caption{\textrm{HSWME}}
        \label{fig:smalldam_HSWME_alpha1}
    \end{subfigure}
    \vskip\baselineskip
    \begin{subfigure}[b]{0.47\textwidth}
        \centering
        \includegraphics[width=\textwidth]{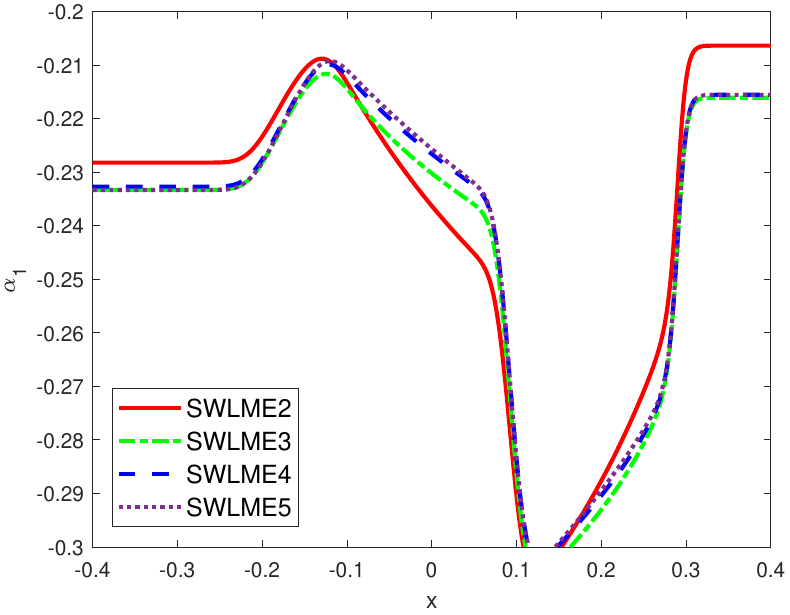}
        \caption{\textrm{SWLME}}
        \label{fig:smalldam_SWLME_alpha1}
    \end{subfigure}
    \hfill
    \begin{subfigure}[b]{0.47\textwidth}
        \centering
        \includegraphics[width=\textwidth]{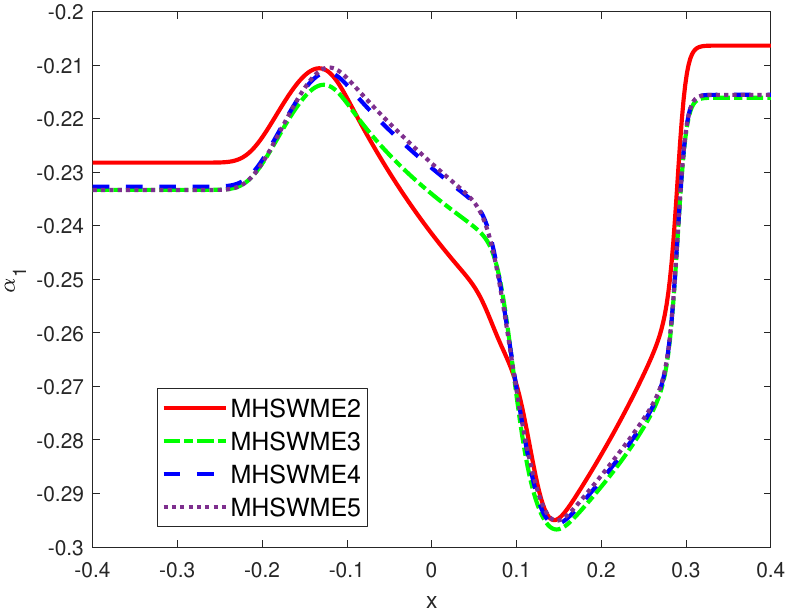}
        \caption{\textrm{MHSWME}}
        \label{fig:smalldam_MHSWME_alpha1}
    \end{subfigure}
    \vskip\baselineskip
    \begin{subfigure}[b]{0.47\textwidth}
        \centering
        \includegraphics[width=\textwidth]{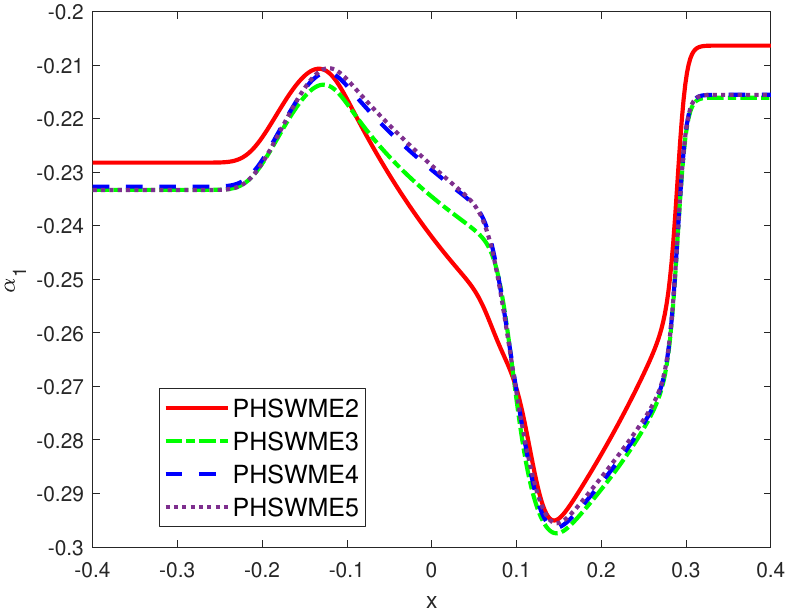}
        \caption{PHSWME}
        \label{fig:smalldam_PHSWME_alpha1}
    \end{subfigure}
    \hfill
    \begin{subfigure}[b]{0.47\textwidth}
        \centering
        \includegraphics[width=\textwidth]{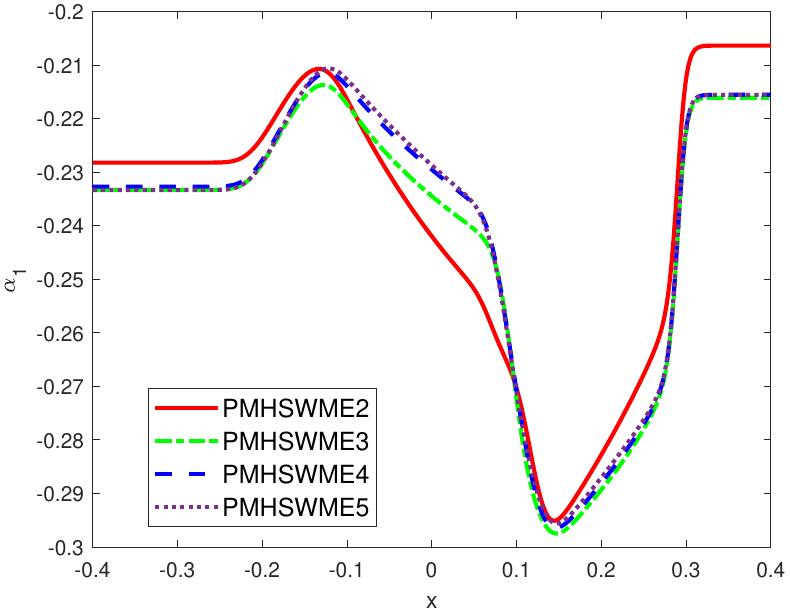}
        \caption{PMHSWME}
        \label{fig:smalldam_PMHSWME_alpha1}
    \end{subfigure}
    \caption{Dam break test case linear coefficient $\alpha_1$ solutions for SWME (top left), HSWME (top right), SWLME (middle left), MHSWME (middle right), PSWME (bottom left), PMSWME (bottom right), for $N=1,2,3,4,5$. Note that all models are equivalent for $N=1$, so this is only shown in the SWME plot. SWME5 is unstable and left out.}
    \label{fig:sol_dambreak_alpha1}
\end{figure}

\begin{figure}[h!]
    \centering
    \begin{subfigure}[b]{0.47\textwidth}
        \centering
        \includegraphics[width=\textwidth]{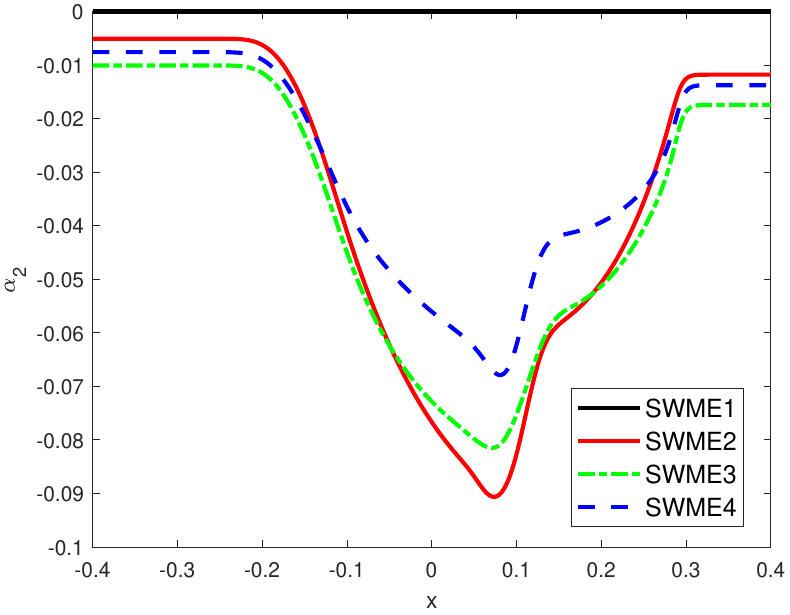}
        \caption{\textrm{SWME}}
        \label{fig:smalldam_SWME_alpha2}
    \end{subfigure}
    \hfill
    \begin{subfigure}[b]{0.47\textwidth}
        \centering
        \includegraphics[width=\textwidth]{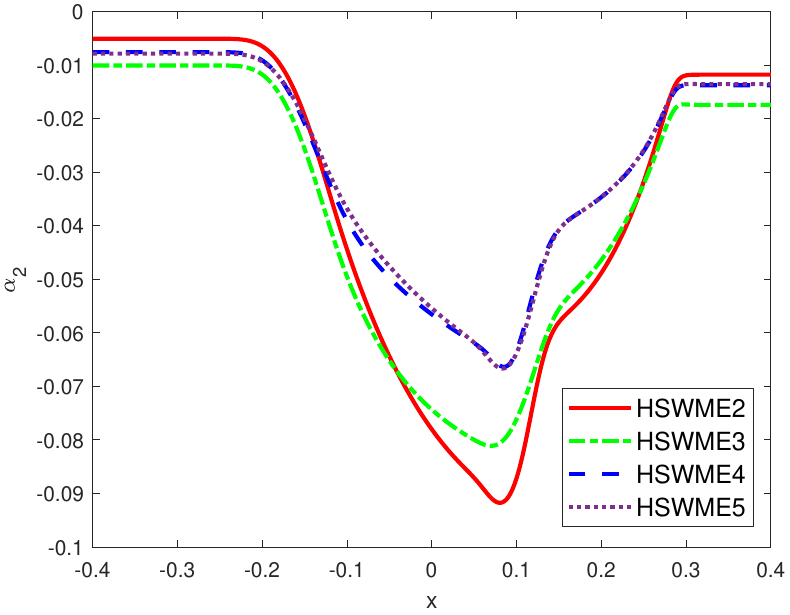}
        \caption{\textrm{HSWME}}
        \label{fig:smalldam_HSWME_alpha2}
    \end{subfigure}
    \vskip\baselineskip
    \begin{subfigure}[b]{0.47\textwidth}
        \centering
        \includegraphics[width=\textwidth]{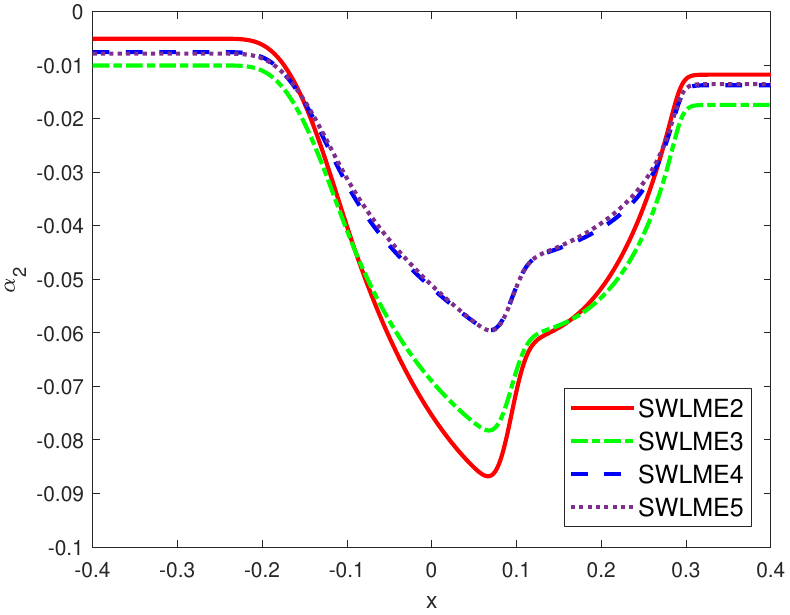}
        \caption{\textrm{SWLME}}
        \label{fig:smalldam_SWLME_alpha2}
    \end{subfigure}
    \hfill
    \begin{subfigure}[b]{0.47\textwidth}
        \centering
        \includegraphics[width=\textwidth]{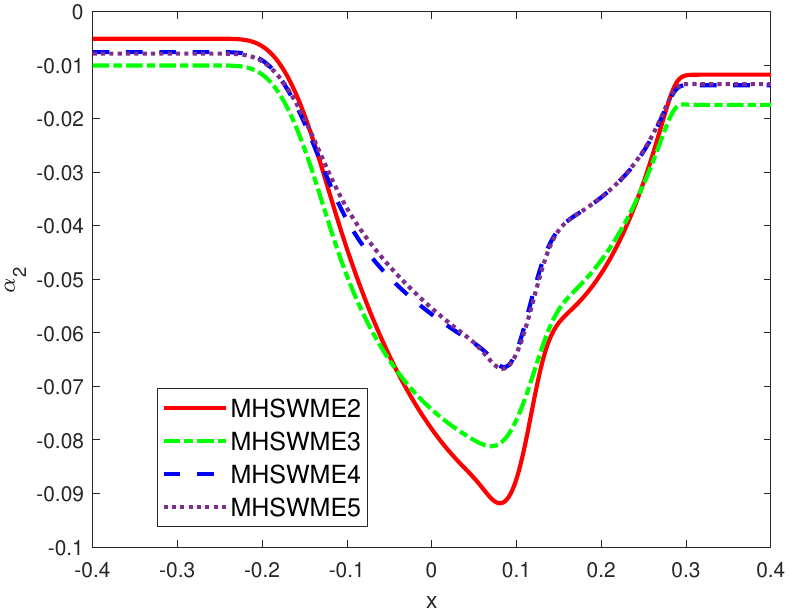}
        \caption{\textrm{MHSWME}}
        \label{fig:smalldam_MHSWME_alpha2}
    \end{subfigure}
    \vskip\baselineskip
    \begin{subfigure}[b]{0.47\textwidth}
        \centering
        \includegraphics[width=\textwidth]{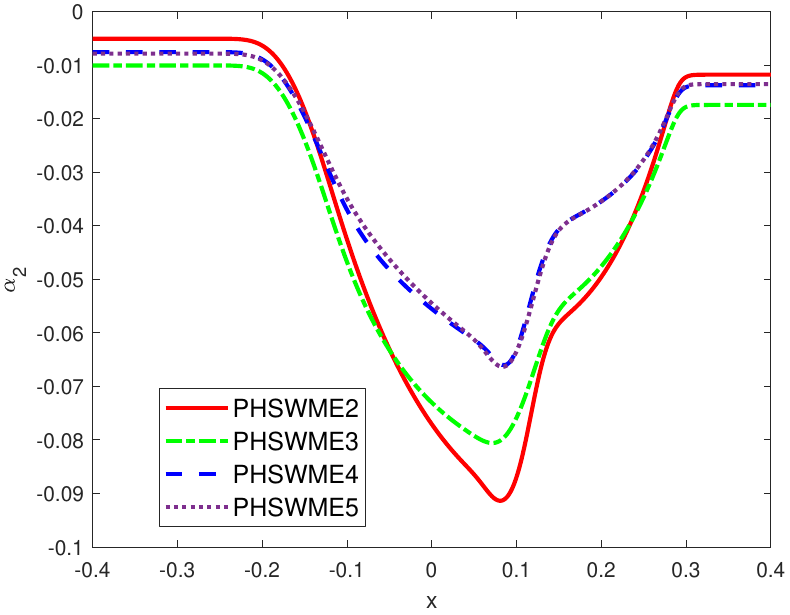}
        \caption{PHSWME}
        \label{fig:smalldam_PHSWME_alpha2}
    \end{subfigure}
    \hfill
    \begin{subfigure}[b]{0.47\textwidth}
        \centering
        \includegraphics[width=\textwidth]{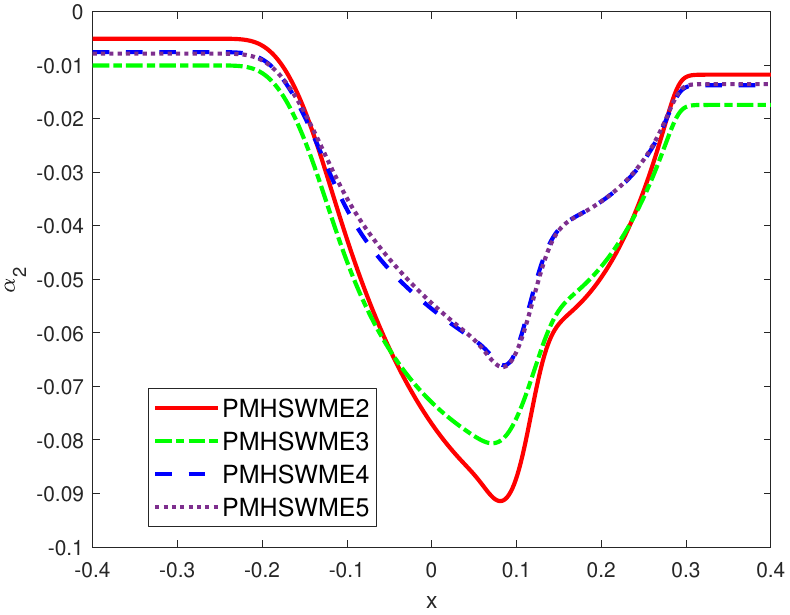}
        \caption{PMHSWME}
        \label{fig:smalldam_PMHSWME_alpha2}
    \end{subfigure}
    \caption{Dam break test case quadratic coefficient $\alpha_2$ solutions for SWME (top left), HSWME (top right), SWLME (middle left), MHSWME (middle right), PSWME (bottom left), PMSWME (bottom right), for $N=1,2,3,4,5$. Note that all models are equivalent for $N=1$, so this is only shown in the SWME plot. SWME5 is unstable and left out.}
    \label{fig:sol_dambreak_alpha2}
\end{figure}

\appendix
\begin{section}{Appendix: Notation}
Table \ref{tab:variables_formulas} summarizes the main notation for variables, coefficients used in this work.
\begin{table}[h]
\centering
\caption{Notation used throughout this work.}
\label{tab:variables_formulas}
\begin{tabular}{|l|l|l|}
\hline
\textbf{Notation} & \textbf{Description} & \textbf{Equation} \\ \hline
$h$ & water height & \eqref{eq:p_vars} \\ \hline
$u_m$ & mean velocity & \eqref{eq:p_vars} \\ \hline
$\alpha_i$ & moment coefficients & \eqref{eq:p_vars} \\ \hline
$U_{p}$ & primitive variables & \eqref{eq:p_vars} \\ \hline
$A^{\textrm{model}}_{p}$ & primitive system matrix of model & e.g. \eqref{eq:p} \\ \hline
$U_{c}$ & convective variables & \eqref{eq:c_vars} \\ \hline
$A^{\textrm{model}}_{c}$ & convective system matrix of model & e.g. \eqref{eq:c} \\ \hline
$T(U_p)$ & transformation from primitive to convective variables & \eqref{eq:dervars}, \eqref{eq:inv_dervars} \\ \hline
$A_{i j k}$, $B_{i j k}$ & SWME coefficients & \eqref{eq:Aijk_Bijk} \\ \hline
$\mathcal{A}_{i,l}$ & lower right block of SWME convective system matrix & \eqref{eq:A_block} \\ \hline
$\chi_c^{\textrm{model}}(\lambda)$ & characteristic polynomial of model system matrix & e.g. \eqref{eq:chi_HSWME_c} \\ \hline
$A_2$ & submatrix appearing in characteristic polynomial & \eqref{eq:A_2} \\ \hline
$a_i$, $c_i$ & entries of submatrix appearing in characteristic polynomial & \eqref{eq:A_2} \\ \hline
$\chi_{A_2}(\lambda)$ & characteristic polynomial of submatrix $A_2$ & \eqref{eq:A_2_char_poly} \\ \hline
$P'_{N+1}$ & derivative of the $(N+1)$-degree Legendre polynomial & e.g. \eqref{eq:A_2_char_poly} \\ \hline
$q_i$ & associated polynomial sequence used in Lemma 3 & \eqref{eq:q_sequence1}, \eqref{eq:q_sequence2} \\ \hline
$\lambda_i$ & eigenvalues & e.g. \eqref{eq:HSWME_c_EV} \\ \hline
$\Fr$ & Froude number & e.g. \eqref{eq:SWLME_steady_states} \\ \hline
$\Ma_i$ & dimensionless $i$-th moment number & e.g. \eqref{eq:SWLME_steady_states} \\ \hline
\end{tabular}
\end{table}

\end{section}

\bibliographystyle{abbrv}
\bibliography{references}

\end{document}